\documentclass[a4paper,11pt]{amsart}
\usepackage{amsfonts,amssymb,amsmath,amsthm,abstract}
\usepackage[ps,all,arc,rotate]{xy}
\usepackage[lmargin=1in,rmargin=1in,tmargin=1in,bmargin=1in]{geometry}
\usepackage{fancyhdr}
\usepackage{pb-diagram}

\newtheorem{prop}{Proposition}[section]
\newtheorem{lemma}[prop]{Lemma}
\newtheorem{thm}[prop]{Theorem}
\newtheorem{cor}[prop]{Corollary}

\theoremstyle{definition}
\newtheorem{defn}[prop]{Definition}

\newtheorem{rmk}[prop]{Remark}
\newtheorem{ex}[prop]{Example}

\DeclareMathOperator{\proj}{Proj} \DeclareMathOperator{\quot}{Quot} \DeclareMathOperator{\rep}{Rep}
\DeclareMathOperator{\red}{red}   \DeclareMathOperator{\pur}{pure}  \DeclareMathOperator{\Stab}{Stab}      \DeclareMathOperator{\reg}{reg}
\DeclareMathOperator{\diag}{diag}
\newcommand{\ra}{\rightarrow}      \newcommand{\onto}{\twoheadrightarrow}

\def\cB{\mathcal B}\def\cC{\mathcal C}
\def\cE{\mathcal E}\def\cF{\mathcal F}\def\cH{\mathcal H}
\def\cI{\mathcal I}
\def\cO{\mathcal O}
\def\cR{\mathcal R}\def\cS{\mathcal S}
\def\cU{\mathcal U}

\def\AA{\mathbb A}
\def\GG{\mathbb G}

\def\NN{\mathbb N}\def\PP{\mathbb P}
\def\QQ{\mathbb Q}\def\RR{\mathbb R}

\def\ZZ{\mathbb Z}

 \def\GL{\mathrm{GL}} \def\SL{\mathrm{SL}}

\title{Stratifications for moduli of sheaves and moduli of quiver representations}

\author{Victoria Hoskins}

\begin{document}

\maketitle

\begin{abstract}
We study the relationship between two stratifications on parameter spaces for coherent sheaves and for quiver representations: a stratification by Harder--Narasimhan types and a stratification arising from the geometric invariant theory construction of the associated moduli spaces of semistable objects. For quiver representations, both stratifications coincide, but this is not quite true for sheaves. We explain why the stratifications on various Quot schemes do not coincide and see that the correct parameter space to compare such stratifications is the stack of coherent sheaves. Then we relate these stratifications for sheaves and quiver representations using a generalisation of a construction of \'{A}lvarez-C\'{o}nsul and King.
\end{abstract}

\section{Introduction}

Many moduli spaces can be described as a 
quotient of a reductive group $G$ acting on a scheme $B$ using the methods of  
geometric invariant theory (GIT) developed by Mumford \cite{mumford}. This 
depends on a choice of linearisation of the $G$-action which determines 
an open subscheme $B^{ss}$ of $B$ of semistable points such that the 
GIT quotient $B/\!/ G$ is a good quotient of $B^{ss}$. In this article, 
we are interested in two moduli problems with such a GIT construction:
\begin{enumerate}
\item moduli of representations of a quiver $Q$ with relations $\cR$;
\item moduli of coherent sheaves on a polarised projective scheme $(X,\cO_X(1))$.
\end{enumerate} 
In the first case, King \cite{king} uses a stability parameter $\theta$ to 
construct moduli spaces of $\theta$-semistable quiver representations of dimension 
vector $d$ as a GIT quotient of an affine variety by a reductive 
group action linearised by a character $\chi_\theta$. In the second case, 
Simpson \cite{simpson} constructs a moduli space of semistable sheaves with 
Hilbert polynomial $P$ as a GIT quotient of a projective scheme by a reductive 
group action linearised by an ample line bundle.

For both moduli problems, we compare two stratifications: a Hesselink 
stratification arising from the GIT construction and a stratification by 
Harder--Narasimhan types. For quiver representations, both stratifications 
coincide, but this is not the case for sheaves. We explain why these 
stratifications do not agree for sheaves and how to rectify this. 
Then we relate these stratifications for sheaves and quiver representations using a 
construction of \'{A}lvarez-C\'{o}nsul and King \cite{ack}.

\subsection{Harder--Narasimhan stratifications}

In both of the above moduli problems, there is a notion of semistability for 
objects that involves verifying an inequality for all subobjects; 
in fact, this arises from the GIT notion of semistability 
appearing in the construction of these moduli spaces. 
The idea of a Harder--Narasimhan (HN) filtration is to construct 
a unique maximally destabilising filtration for each object in a moduli 
problem \cite{hn}.

Every coherent sheaf has a unique HN filtration: for pure sheaves, this result 
is well-known (cf. \cite{huybrechts} Theorem 1.3.4) and the extension to coherent 
sheaves is due to Rudakov \cite{rudakov}. 
For quiver representations, there is no canonical notion of HN 
filtration with respect to the stability parameter $\theta$. Rather, the 
notion of HN filtration depends on a collection of positive integers 
$\alpha_v$ indexed by the vertices $v$ of the quiver (see Definition 
\ref{defn HN}). We note that in many previous works, 
only the choice $\alpha_v =1$, for all vertices $v$, is considered.

For both quiver representations and sheaves, we can stratify the 
associated moduli stacks by HN types (which encode the 
invariants of the successive quotients in the HN filtrations) and 
we can stratify the parameter spaces used in the GIT 
construction of these moduli spaces.

\subsection{Hesselink stratifications}

Let $G$ be a reductive group acting on a scheme $B$ such that
\begin{enumerate}
\item $B$ is affine and the action on the structure sheaf is 
       linearised by a character of $G$ , or
\item $B$ is projective with an ample $G$-linearisation.
\end{enumerate}
Then, by the Hilbert--Mumford criterion \cite{mumford}, it suffices 
to check GIT semistability on 1-parameter subgroups (1-PSs) 
of $G$. Associated to this action and a choice of norm on 
the set of conjugacy classes of 1-PSs, there is a stratification 
due to Hesselink \cite{hesselink} of $B$
\[ B = \bigsqcup_{\beta \in \cI} S_\beta\]
into finitely many disjoint $G$-invariant locally closed subschemes. 
Furthermore, there is a partial order on the strata such that the 
closure of a stratum is contained in the union over all higher strata, the 
lowest stratum is $B^{ss}$ and the higher strata parametrise points of 
different instability types. In order to construct this stratification, 
the idea is to associate to each unstable point a conjugacy class of 
adapted 1-PSs of $G$, in the sense of Kempf \cite{kempf}, 
that are most responsible for the instability of this point. 

These stratifications have a combinatorial nature: 
the index set can be determined from the weights of the action of 
a maximal torus of $G$ and the strata $S_\beta$ can be constructed from 
simpler subschemes $Z_\beta$, known as limit sets, which are semistable 
sets for smaller reductive group actions (cf. \cite{kirwan,ness} for the 
projective case and \cite{hoskins_quivers} for the affine case).

Hesselink stratifications have a diverse range of applications. 
For a smooth projective variety $B$, the cohomology of $B/\!/G$ 
can be studied via these stratifications \cite{kirwan}. 
In variation of GIT, these stratifications are used to describe the 
birational transformations between quotients \cite{dolgachevhu,thaddeus} 
and to provide orthogonal decompositions in the derived 
categories of GIT quotients \cite{bfk,halpern}.

\subsection{Comparison results}

For quiver representations, the choice of parameter $\alpha$ 
used to define  HN filtrations corresponds to a choice of 
norm $|| - ||_{\alpha}$ used to define the Hesselink stratification.
The corresponding HN and Hesselink stratifications on the 
space of quiver representations agree: for a quiver without 
relations, this result is \cite{hoskins_quivers} Theorem 5.5 and we deduce the 
corresponding result for a quiver with relations in Theorem \ref{quiver HN is Hess}.

Simpson constructs the moduli space of semistable sheaves on $(X,\cO_X(1))$ 
with Hilbert polynomial $P$ as a GIT quotient of a closed subscheme $R_n$ of the Quot scheme 
\[\quot_n:=\quot(k^{P(n)} \otimes \cO_X(-n),P)\] by the natural $\SL_{P(n)}$-action 
for $n$ sufficiently large. The associated Hesselink stratification of $R_n$ 
was partially described in \cite{hoskinskirwan} and compared with the stratification 
by HN types. It is natural to expect these stratifications to coincide, as in the 
case of quiver representations, but it is only shown in \cite{hoskinskirwan} 
that a pure HN stratum is contained in a Hesselink stratum for $n$ sufficiently large.

In this article, we complete the description of the Hesselink stratification of $\quot_n$ 
and extend the inclusion result of \cite{hoskinskirwan} to non-pure HN types (cf. 
Theorem \ref{thm comp strat}). Moreover, we explain why the Hesselink and HN stratifications on the Quot scheme do not agree. 
The underlying moral reason is that the Quot scheme 
does not parametrise all sheaves on $X$ with all Hilbert polynomial $P$ 
and so is only a truncated parameter space. 
The natural solution is to instead consider the moduli stack 
$\cC oh_P(X)$ of coherent sheaves on $X$ with Hilbert polynomial $P$. 
As every coherent sheaf is $n$-regular for $n$ sufficiently large, 
the open substacks of $n$-regular sheaves form an 
increasing cover
\[ \cC oh_P(X) = \bigcup_{n \geq 0} \cC oh_P^{n-\reg}(X)\]
such that $\cC oh^{n-\reg}_P(X) $ is a stack quotient of an open 
subscheme of $\quot_n$ by $\GL_{P(n)}$. Using this description, we can view the 
Quot schemes as finite dimensional approximations to $\cC oh_P(X)$. 
We use the Hesselink 
stratifications on each $\quot_n$ to construct an associated Hesselink stratification 
on $\cC oh_P(X)$, which can be seen as a limit over $n$ of the stratifications 
on $\quot_n$. The main result is that the Hesselink and HN stratifications on $\cC oh_P(X)$ coincide (cf. \ref{cor Hess is HN on stack}).

\subsection{The functorial construction of \'{A}lvarez-C\'{o}nsul and King}

We relate these stratifications for sheaves and quivers using a 
functor considered by \'{A}lvarez-C\'{o}nsul and King \cite{ack}
\[ \Phi_{n,m}:= \text{Hom} (\cO_X(-n) \oplus \cO_X(-m),\:-\:) : 
\textbf{Coh}(X) \ra \textbf{Reps}({K_{n,m}})\]
from the category of coherent sheaves on $X$ to the category of representations 
of a Kronecker quiver $K_{n,m}$ with two vertices ${n,m}$ and 
$\dim H^0(\cO_X(m-n))$ arrows from $n$ to $m$. Explicitly, a sheaf $\cE$ is 
sent to the $K_{n,m}$-representation $W_\cE=(W_{\cE,n}, W_{\cE,m}, \text{ev}_\cE)$ where 
$W_{\cE,l}:= H^0(\cE(l))$ and the morphisms are given by the evaluation map
\[ \text{ev}_\cE:  H^0(\cE(n)) \otimes H^0(\cO_X(m-n)) \ra H^0(\cE(m)).\]
\'{A}lvarez-C\'{o}nsul and King \cite{ack} show, 
for $m >\!> n > \!> 0$, this functor embeds the subcategory of semistable sheaves 
with Hilbert polynomial $P$ into the subcategory of $\theta_{n,m}(P)$-semistable 
quiver representations of dimension $d_{n,m}(P)$ and construct the moduli space 
of semistable sheaves on $X$ with Hilbert polynomial $P$ by using King's 
construction of the moduli spaces of quiver representations. 

In the final section of this article, we relate the stratifications for moduli 
of quivers and sheaves using this map of \'{A}lvarez-C\'{o}nsul and King. 
We prove that a HN stratum for sheaves is sent to a HN stratum for quiver 
representations (cf. Theorem \ref{thm ack HN}); although, multiple HN strata for sheaves 
can be sent to the same HN stratum for quiver representations when $\dim X > 1$.

\subsection*{Notation and conventions}

We work over an algebraically closed field $k$ of characteristic zero and 
by scheme, we mean scheme of finite type over $k$. By sheaf, we always mean 
coherent algebraic sheaf. For schemes $X$ and $S$, by a family of sheaves 
on $X$ parametrised by $S$, we mean a sheaf $\cF$ over $X \times S$ 
that is flat over $S$ and we write $\cF_s := \cF|_{X \times \{s\}}$. 

We use the term stratification in a weaker sense than usual to mean a 
decomposition into disjoint locally closed subsets with a partial order on 
the strata such that the closure of a given stratum is contained in the 
union of all higher strata (usually for a stratification, one requires 
the closure of a given stratum to be the union of all higher strata).

For natural numbers 
$m$ and $n$, we write \lq for $m >\!> n$' or \lq for $m$ sufficiently larger 
than $n$' to mean there exists $N \geq n$ such that for all $m \geq N$. Similarly, by \lq for $n_r >\!> n_{r-1} >\!> \cdots >\!> n_0$', we mean $\exists N_0 \geq n_0 \: \forall n_1 \geq N_0 \: \exists N_1 \geq n_1 \: \forall n_2 \geq N_1 \dots \: \exists N_{r-1} \geq n_{r-1} \: \forall n_r \geq N_{r-1}$.

\section{Hesselink stratifications of affine varieties}\label{sec hess aff}

Let $G$ be a reductive group acting linearly on an affine variety $V$ and 
let $L_\rho$ denote the $G$-linearisation on the structure sheaf of $V$ obtained 
by twisting by a character $\rho : G \ra \GG_m$. 

\begin{defn}\label{defn GIT ss wrt char} A point $v \in V$ is $\rho$-{semistable} 
if there exists $\sigma \in H^0(V, {L}_\rho^{\otimes n})^G $ for some $n > 0$ such that $\sigma(v) \neq 0$. 
 We let $V^{\rho-\text{ss}}$ denote the subset of $\rho$-semistable points.
\end{defn}

By \cite{mumford} Theorem 1.10, the GIT quotient of $G$ acting on $V$ with respect to $L_\rho$
\[ V^{\rho-\text{ss}} \ra V/\!/_\rho G := \proj \bigoplus_{n \geq 0} H^0(V, {L}_\rho^{\otimes n})^G \]
is a good quotient. Let $(-,-) : X^*(G) \times X_*(G) \ra \ZZ$ be the natural pairing between 
characters and cocharacters; then semistability can be checked on 
1-parameter subgroups $\lambda : \GG_m \ra G$.

\begin{thm}[The Hilbert--Mumford criterion \cite{king,mumford}]
A point $v \in V$ is $\rho$-semistable if and only we have $(\rho,\lambda) \geq 0$, 
for every 1-PS $\lambda : \GG_m \ra G$ 
such that $\lim_{t \ra 0 } \lambda(t) \cdot v$ exists.
\end{thm}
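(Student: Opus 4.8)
The plan is to prove the two implications separately, handling the \lq only if' direction by a direct computation and the \lq if' direction by absorbing the character into an auxiliary action and reducing to classical affine GIT. The first step, common to both, is to unwind Definition~\ref{defn GIT ss wrt char}: a $G$-invariant section of $L_\rho^{\otimes n}$ is the same datum as a regular function $f \in k[V]$ satisfying $f(g \cdot w) = \rho(g)^n f(w)$ for all $g \in G$, that is, a $\rho^n$-semi-invariant. Hence $v$ is $\rho$-semistable precisely when some $\rho^n$-semi-invariant with $n > 0$ is nonzero at $v$.

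For the forward implication, suppose $v$ is $\rho$-semistable, witnessed by a $\rho^n$-semi-invariant $f$ with $f(v) \neq 0$, and let $\lambda$ be a 1-PS for which $v_0 := \lim_{t \ra 0} \lambda(t) \cdot v$ exists. Then $f(\lambda(t) \cdot v) = \rho(\lambda(t))^n f(v) = t^{n(\rho,\lambda)} f(v)$. As $t \ra 0$ the left-hand side converges to the finite value $f(v_0)$, while $f(v) \neq 0$; this forces the exponent $n(\rho,\lambda)$ to be nonnegative, so $(\rho,\lambda) \geq 0$. This direction is elementary and uses nothing beyond the definition.

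For the converse I would remove the character by passing to the affine cone $\tilde V := V \times \AA^1$, equipped with the $G$-action $g \cdot (w,z) = (g \cdot w, \rho(g)^{-1} z)$. A decomposable function $f \otimes z^n$ on $\tilde V$ is $G$-invariant if and only if $f$ is a $\rho^n$-semi-invariant, so by the first step $v$ is $\rho$-semistable exactly when there is a $G$-invariant regular function on $\tilde V$ that vanishes on $V \times \{0\}$ but not at the point $(v,1)$. Since $G$ is reductive, invariant functions separate disjoint closed $G$-invariant subsets, so such a function exists if and only if the orbit closure $\overline{G \cdot (v,1)}$ is disjoint from the closed invariant subvariety $V \times \{0\}$.

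Finally I would invoke the classical Hilbert--Mumford criterion for orbit closures from \cite{mumford}: the closed invariant set $V \times \{0\}$ meets $\overline{G \cdot (v,1)}$ if and only if there is a 1-PS $\lambda$ of $G$ with $\lim_{t \ra 0} \lambda(t) \cdot (v,1) \in V \times \{0\}$. Computing this limit coordinatewise, its existence forces $\lim_{t \ra 0} \lambda(t) \cdot v$ to exist, and the second coordinate $t^{-(\rho,\lambda)}$ tends to $0$ exactly when $(\rho,\lambda) < 0$. Thus $v$ fails to be $\rho$-semistable if and only if there is a 1-PS $\lambda$ with $\lim_{t \ra 0} \lambda(t) \cdot v$ existing and $(\rho,\lambda) < 0$, which is the contrapositive of the assertion. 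I expect the substantive input to be precisely this last step, namely the classical theorem that a degeneration of an orbit into a closed invariant subset is always realised by a one-parameter subgroup; the cone construction and the separation of closed invariant sets by invariants are formal consequences of reductivity.
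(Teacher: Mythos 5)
The paper gives no proof of this statement; it is quoted from \cite{king} and \cite{mumford}, so there is no internal argument to compare against. Your proposal is correct and is essentially the standard proof, namely the one in King's paper (his Proposition 2.5): the elementary semi-invariant computation for the \lq only if' direction, and for the converse the passage to the affine cone $V \times \AA^1$ with the $\rho^{-1}$-twisted action on the second factor, the separation of disjoint closed invariant sets by invariants (reductivity), and the classical Hilbert--Mumford--Birkes/Kempf theorem that a degeneration into a closed invariant subset is realised by a one-parameter subgroup. The one step you should spell out is the bridge between \lq\lq some $\rho^n$-semi-invariant with $n>0$ is nonzero at $v$'' and \lq\lq some $G$-invariant function on $\tilde V$ vanishes on $V \times \{0\}$ but not at $(v,1)$'': the separation lemma produces an arbitrary invariant function, not a decomposable one of the form $f \otimes z^n$, so you must decompose it into its $z$-homogeneous components $f_n z^n$ (each of which is again invariant, since $G$ acts on $z$ through a character), note that vanishing on $V \times \{0\}$ kills the $n=0$ piece, and extract a component with $n \geq 1$ and $f_n(v) \neq 0$. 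This is a one-line remark rather than a gap, but without it the equivalence you assert \lq\lq by the first step'' only covers decomposable invariants.
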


The unstable locus $V - V^{\rho-ss}$ can be stratified by instability types by 
assigning to each unstable point a conjugacy class of 1-PSs 
that is \lq most responsible' for the instability of this point and then stratifying 
by these conjugacy classes. To give a precise meaning to the term 
\lq most responsible' we use a norm $|| - ||$ on the set $\overline{X}_*(G)$ of conjugacy classes of 1-PSs of $G$. More precisely, we fix a maximal torus $T$ of $G$ and a 
Weyl invariant norm $|| - ||_T$ on $X_*(T)_\RR$; then, for $\lambda \in X_*(G)$, we let $|| \lambda ||:= || g\lambda g^{-1}||_T$ 
for $g \in G$ such that $g\lambda g^{-1} \in X_*(T)$.

\begin{ex}\label{ex norms} (a)
Let $T$ be the diagonal maximal torus in $G = \GL(n)$;  
then the norm associated to the dot product on $\RR^n \cong X_*(T)_\RR $ is invariant under the Weyl group $W = S_n$.

(b) For a product of general linear groups $G=\GL(n_1)\times\cdots \times\GL(n_r)$, 
we can use positive numbers $\alpha \in \NN^r$ to weight the norms $|| - ||$ for each factor constructed by part (a); 
that is, 
\[ || (\lambda_1,\dots , \lambda_r) ||_\alpha := \sum_{i=1}^r \alpha_i ||\lambda_i||.\]
\end{ex}

We fix a norm $|| - ||$ on $\overline{X}_*(G)$ such that $|| -||^2$ is $\ZZ$-valued. In 
the study of instability in projective GIT, Kempf used such a norm to define 
a notion of adapted 1-PSs \cite{kempf}. 
In the affine setting, we use an appropriately modified definition as given in 
\cite{hoskins_quivers}. For $v \in V$, we let 
\[ M^\rho_G(v) := \inf\left\{\frac{(\rho, \lambda)}{|| \lambda ||}:\:\text{1-PSs}\: 
\lambda \text{ of } G \text{ such that } \lim_{t \to 0} \lambda(t) \cdot v\: 
\text{exists}\right\}.\]

\begin{defn}
A 1-PS $\lambda$ is $\rho$-adapted to a $\rho$-unstable point $v$ if $\lim_{t \to 0} \lambda(t) \cdot v$ exists and
\[ M^\rho_G(v) = \frac{(\rho,\lambda)}{|| \lambda ||}. \]
We let $\wedge^\rho(v) $ denote the set of primitive 1-PSs which are $\rho$-adapted to 
$v$. Let $[\lambda]$ denote the conjugacy class of a 1-PS $\lambda$; then the Hesselink stratum associated to $[\lambda]$ is
\[S_{[\lambda]}:=\{ v \in V - V^{\rho\text{-}\mathrm{ss}}: 
\wedge^\rho(v) \cap [\lambda] \neq \emptyset\}.\]
We define a strict partial ordering $<$ on $\overline{X}_*(G)$ by $[\lambda] < [\lambda']$ if 
\[ \frac{(\rho,\lambda)}{||\lambda||} > \frac{(\rho,\lambda')}{||\lambda'||}. \]
\end{defn}

Let $V^\lambda_+ $ be the closed subvariety of $V$ consisting of points $v$ such that 
$\lim_{t \to 0} \lambda(t) \cdot v$ exists; then we have a natural retraction 
$p_\lambda : V^\lambda_+ \ra V^\lambda$ onto the $\lambda$-fixed locus. The following theorem 
describing the Hesselink strata appears in \cite{hoskins_quivers} for the case when 
$V$ is an affine space.

\begin{thm}\label{hess strat thm}
Let $G$ be a reductive group acting linearly on an affine variety $V$ with 
respect to a character $\rho$. Let $|| - ||$ be a norm on $\overline{X}_*(G)$; then there is a decomposition 
\[ V -V^{\rho\text{-}\mathrm{ss}} = \bigsqcup_{[\lambda] \in \cB} S_{[\lambda]} \]
into finitely many disjoint $G$-invariant locally closed subvarieties of $V$. 
Moreover, we have
\begin{enumerate}
\item $S_{[\lambda]}= GS_\lambda$ where 
$S_{\lambda}:= \{ v:  \lambda \in \wedge^\rho(v)\}$,
\item $S_\lambda = p_\lambda^{-1}(Z_\lambda) $ where 
$ Z_{\lambda} := \{ v :  \lambda \in \wedge^\rho(v) \cap G_v\} $,
\item $Z_{\lambda}$ is the semistable subset for 
$G_\lambda:= \Stab_G(\lambda)$ acting on $V^{\lambda}$ with respect to the 
character $\rho_\lambda:= || \lambda||^2 \rho - (\rho,\lambda)\lambda^*$, 
where $\lambda^*$ is the $|| - ||$-dual character to $\lambda$, 
\item $\partial S_{[\lambda]} \cap S_{[\lambda']} \neq \emptyset$ only if 
$[\lambda] < [\lambda']$.
\end{enumerate}
\end{thm}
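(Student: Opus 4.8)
The plan is to reduce the general case to the case of an affine space, which is the content of the cited result in \cite{hoskins_quivers}. Since $G$ is reductive and $V$ is affine, the coordinate ring $k[V]$ is a rational $G$-module, so there is a finite-dimensional $G$-submodule of $k[V]$ generating it as a $k$-algebra; dualising produces a closed $G$-equivariant embedding $\iota : V \hookrightarrow A$ into a finite-dimensional $G$-representation $A$ on which $G$ acts linearly. The character $\rho$ determines a linearisation $L_\rho^A$ on $A$ whose restriction along $\iota$ is $L_\rho$, and throughout I would compare the two settings via the $1$-PSs of $G$.

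First I would observe that, because $V$ is closed and $G$-invariant in $A$, for any $v \in V$ and any $1$-PS $\lambda$ the limit $\lim_{t\to 0}\lambda(t)\cdot v$ exists in $A$ if and only if it exists in $V$, and the two limits then agree. Consequently the set of $1$-PSs over which the infimum defining $M^\rho_G(v)$ is taken is literally the same whether $v$ is regarded as a point of $V$ or of $A$; hence $M^\rho_G(v)$ and the set $\wedge^\rho(v)$ of adapted primitive $1$-PSs are intrinsic to $v$ and are computed by the affine-space result. In particular the infimum is attained, so $\wedge^\rho(v)\neq\emptyset$ for unstable $v$, and by the Hilbert--Mumford criterion $V^{\rho\text{-}\mathrm{ss}} = A^{\rho\text{-}\mathrm{ss}} \cap V$. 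It follows at once that each stratum satisfies $S_{[\lambda]}^V = S_{[\lambda]}^A \cap V$, so the strata are disjoint, $G$-invariant and locally closed in $V$; finiteness holds because the index set for $V$ is a subset of the (finite) index set for $A$.

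The structural statements (1)--(3) then follow by intersecting the corresponding identities in $A$ with $V$. For (1), $G$-invariance of $V$ gives $G(S_\lambda^A \cap V) = (GS_\lambda^A)\cap V$, whence $S_{[\lambda]}^V = GS_\lambda^V$. For (2), the fixed and attracting loci restrict as $V^\lambda = A^\lambda \cap V$ and $V^\lambda_+ = A^\lambda_+ \cap V$, the retraction $p_\lambda$ on $V^\lambda_+$ is the restriction of the one on $A^\lambda_+$ (again because limits stay in $V$), so $Z_\lambda^V = Z_\lambda^A \cap V$ and $S_\lambda^V = p_\lambda^{-1}(Z_\lambda^V)$. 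Statement (3) is the one needing a genuine argument rather than bookkeeping: I must check that GIT semistability for $G_\lambda=\Stab_G(\lambda)$ acting on $V^\lambda$ with respect to $\rho_\lambda$ is compatible with the closed embedding $V^\lambda \hookrightarrow A^\lambda$, i.e. that $(A^\lambda)^{\rho_\lambda\text{-}\mathrm{ss}} \cap V^\lambda = (V^\lambda)^{\rho_\lambda\text{-}\mathrm{ss}}$. I would establish this by the Hilbert--Mumford criterion for the $G_\lambda$-action: since $V^\lambda$ is closed in $A^\lambda$, a $1$-PS of $G_\lambda$ admits a limit in $V^\lambda$ exactly when it admits one in $A^\lambda$, so the numerical criterion returns the same verdict on either side; combined with $Z_\lambda^A = (A^\lambda)^{\rho_\lambda\text{-}\mathrm{ss}}$ from the affine-space case this yields $Z_\lambda^V = (V^\lambda)^{\rho_\lambda\text{-}\mathrm{ss}}$.

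Finally, for the closure relation (4) I would use that $V$ is closed, so the closure in $V$ of a stratum is contained in the intersection with $V$ of its closure in $A$; thus $\partial S_{[\lambda]}^V \cap S_{[\lambda']}^V \neq \emptyset$ forces $\partial S_{[\lambda]}^A \cap S_{[\lambda']}^A \neq \emptyset$, and the order relation $[\lambda] < [\lambda']$ is inherited from the affine-space case, where it depends only on the ratios $(\rho,\lambda)/||\lambda||$. The main obstacle I anticipate is item (3): one must verify that the auxiliary character $\rho_\lambda = ||\lambda||^2\rho - (\rho,\lambda)\lambda^*$ and the smaller reductive group $G_\lambda$ are set up identically for $V^\lambda$ and $A^\lambda$, and that no semistable points are gained or lost under the closed embedding. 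Everything else is a routine transfer of the affine-space statement along $\iota$, powered by the single fact that limits of $1$-PS actions cannot escape the closed subvariety $V$.
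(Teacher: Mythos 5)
Your proposal is correct and follows essentially the same route as the paper: embed $V$ as a closed $G$-invariant subvariety of an affine space, invoke the affine-space result of \cite{hoskins_quivers}, and transfer each of (1)--(4) by intersecting with $V$, using that limits of $1$-PS orbits cannot leave the closed invariant subvariety. The only cosmetic difference is that you identify $V^{\rho\text{-}\mathrm{ss}}$ with $A^{\rho\text{-}\mathrm{ss}}\cap V$ (and likewise $Z_\lambda^V$ with $Z_\lambda^A\cap V^\lambda$) via the Hilbert--Mumford criterion, whereas the paper uses exactness of taking invariants for the reductive groups $G$ and $G_\lambda$; both arguments are valid.
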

\begin{proof} We deduce the result for a closed $G$-invariant subvariety $V$ of an 
affine space $W$ from the result for $W$ given in \cite{hoskins_quivers}. 
Since taking invariants for a reductive group $G$ is exact 
and $V \subset W$ is closed, it follows that $V^{\rho-ss} = V \cap W^{\rho-ss}$ and  
the Hesselink stratification of $V- V^{\rho-ss}$ is the intersection of 
$V- V^{\rho-ss}$ with the Hesselink stratification of $W - W^{\rho-ss}$; that is,
\[    V = \bigsqcup_{[\lambda]} S_{[\lambda]}^V \quad \text{and} \quad 
      W = \bigsqcup_{[\lambda]} S_{[\lambda]}^W, \quad \quad 
   \text{where} \quad S_{[\lambda]}^V =V \cap S_{[\lambda]}^W .\]
We use properties of the strata for $W$ to prove the analogous properties of 
the strata for $V$.
\begin{enumerate}
\item As $V$ is $G$-invariant, we have that 
$S_{[\lambda]}^V : =V \cap S_{[\lambda]}^W = G(V \cap S_\lambda^W)=G S_\lambda^V$.
\item As $V\subset W$ is closed, we have that $S_\lambda^V=V \cap S_\lambda^W=
p_\lambda^{-1}(V \cap Z_\lambda^W)=p_\lambda^{-1}(Z_\lambda^V)$.
\item As $G_\lambda$ is reductive and $V \subset W$ is closed, we have that 
$Z_\lambda^V = V \cap Z_\lambda^W$ is the GIT semistable locus for $G_\lambda$ 
acting on $V^\lambda= V \cap W^\lambda$.
\item As $V \subset W$ is closed, we have that $\emptyset \neq \partial S_{[\lambda]}^V 
\cap S_{[\lambda']}^V \subset \partial S_{[\lambda]}^W \cap S_{[\lambda']}^W $ 
only if $[\lambda] < [\lambda']$.
\end{enumerate}
This completes the proof of the theorem.
\end{proof}

By setting $S_{[0]}:= V^{\rho\text{-ss}}$, we obtain a Hesselink stratification of $V$
\begin{equation}\label{hess strat} V = \bigsqcup_{[\lambda]} S_{[\lambda]}. \end{equation}

\subsection{Computing the Hesselink stratification}\label{sec comp Hess strat}

The task of computing these stratifications is greatly simplified by 
Theorem \ref{hess strat thm}, as we can construct the strata from the limit sets 
$Z_\lambda$. Furthermore, we will see that, by fixing a maximal torus $T$ 
of $G$, we can determine the indices for the unstable strata from the $T$-weights of the 
action on $V$.

\begin{defn} For $v \in V$, we let $W_v$ denote the set of $T$-weights of $v$. 
For each subset $W$ of $T$-weights, we consider an associated cone in 
$ X_*(T)_\RR = X_*(T) \otimes_\ZZ \RR$
\[ C_W := \bigcap_{\chi \in W} H_\chi  \quad \quad \text{where} \: \: 
H_\chi:=\{ \lambda \in  X_*(T)_\RR : (\lambda,\chi) \geq 0 \}. \]
Let $\rho_T \in X^*(T)$ denote the restriction of $\rho$ to $T$; then a subset 
$W$ of the $T$-weights is called $\rho_T$-semistable if $C_W \subseteq H_{\rho_T}$ 
and otherwise we say $W$ is $\rho_T$-unstable. 

If $W$ is $\rho_T$-unstable, we let $\lambda_W$ be the unique primitive 1-PS in 
$C_W \cap X_*(T)$ for which $\frac{(\lambda,\rho)}{||\lambda||}$ is minimal 
(for the existence and uniqueness of this 1-PS, see \cite{hoskins_quivers} 
Lemma 2.13).
\end{defn}

If $W = \emptyset$, then $C_W = X_*(T)_\RR$ 
and $W$ is $\rho_T$-unstable for all non-trivial $\rho_T$ with  
$\lambda_W = - \rho_T$. 

\begin{prop}\label{prop strat for torus}
Let $T =(\GG_m)^n$ act linearly on an affine variety $V$ 
with respect to a character $\rho : T \ra \GG_m$ and let $|| - ||$ be the 
norm associated to the dot product on $\ZZ^n$; then the Hesselink stratification 
for the torus $T$ is given by
\[ V - V^{\rho-ss} = \bigsqcup_{W \in \cB} S_{\lambda_W}\]
where $\cB = \{ W : W \text{ is } \rho \text{-unstable} \}$ and 
$S_{\lambda_W}= \{ v \in V : \: W = W_v  \}$.
Therefore, the stratification is determined by the $T$-weights and, moreover, 
$ v \in V$ is $\rho$-semistable if and only if its $T$-weight set 
$W_v$ is $\rho$-semistable (that is, $C_{W_v} \subseteq H_{\rho}$).
\end{prop}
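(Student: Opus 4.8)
The plan is to specialise the construction behind Theorem \ref{hess strat thm} to $G=T$, where conjugacy classes of 1-PSs are singletons, and to read off every ingredient from the $T$-weight decomposition of $V$. Writing $v=\sum_\chi v_\chi$ for the weight decomposition of a point inside the ambient affine space, a 1-PS $\lambda\in X_*(T)$ acts by $\lambda(t)\cdot v=\sum_\chi t^{(\lambda,\chi)}v_\chi$, so $\lim_{t\to 0}\lambda(t)\cdot v$ exists precisely when $(\lambda,\chi)\geq 0$ for all $\chi\in W_v$, that is, precisely when $\lambda\in C_{W_v}$; thus the set of 1-PSs admitting a limit at $v$ is the set of integral points of the rational polyhedral cone $C_{W_v}$, depending on $v$ only through $W_v$. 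By the Hilbert--Mumford criterion, $v$ is then $\rho$-semistable if and only if $(\rho,\lambda)\geq 0$ for all $\lambda\in C_{W_v}\cap X_*(T)$. As $C_{W_v}$ is the non-negative span of finitely many integral generators and $(\rho,-)$ is linear, non-negativity on these integral points is equivalent to $C_{W_v}\subseteq H_\rho$; this is exactly the condition that $W_v$ be $\rho$-semistable, which proves the final assertion of the proposition.

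Next I would identify the adapted 1-PS on the unstable locus. For $\rho$-unstable $v$ the quotient $(\rho,\lambda)/||\lambda||$ is scale-invariant, so the infimum $M^\rho_T(v)$, taken over the 1-PSs with a limit at $v$, is an infimum over $C_{W_v}\cap X_*(T)$ and depends only on $W_v$. By \cite{hoskins_quivers} Lemma 2.13 this infimum is attained at a unique primitive 1-PS $\lambda_{W_v}\in C_{W_v}\cap X_*(T)$, whence $\wedge^\rho(v)=\{\lambda_{W_v}\}$; since $T$ is abelian the conjugacy class of $\lambda_{W_v}$ is the singleton $\{\lambda_{W_v}\}$. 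Thus the stratum through $v$ is determined by the combinatorial datum $W_v$, and the strict order $<$ is induced by the values $(\rho,\lambda_W)/||\lambda_W||$ as in the definition preceding Theorem \ref{hess strat thm}.

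It then remains to assemble these pieces. There are finitely many $T$-weights on $V$, hence finitely many weight sets and finitely many cones $C_W$, giving finiteness of $\cB=\{W:W\text{ is }\rho\text{-unstable}\}$. Each locus $S_{\lambda_W}=\{v:W_v=W\}$ is locally closed, being cut out by $v_\chi=0$ for $\chi\notin W$ together with $v_\chi\neq 0$ for $\chi\in W$; these loci are pairwise disjoint, and by the semistability criterion their union over $\rho$-unstable $W$ is exactly $V-V^{\rho-ss}$. Transporting properties (1)--(4) of Theorem \ref{hess strat thm} through the identifications $G_\lambda=T$ and $V^\lambda=\{v:(\lambda,\chi)=0\text{ for all }\chi\in W_v\}$ then yields the asserted decomposition together with its closure relations.

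The step I expect to be the main obstacle is the passage from the cone $C_{W_v}$ to the optimal primitive 1-PS $\lambda_{W_v}$, namely the existence-and-uniqueness content of \cite{hoskins_quivers} Lemma 2.13, together with the combinatorial bookkeeping matching the weight-indexed pieces with the strata of Theorem \ref{hess strat thm}. Within that bookkeeping the most delicate point is the boundary condition $\partial S_{\lambda_W}\cap S_{\lambda_{W'}}\neq\emptyset$ only if $\lambda_W<\lambda_{W'}$, which I would verify by comparing the cones $C_W$ and $C_{W'}$ for a weight set and a weight set in its closure and tracking how the optimal value $(\rho,\lambda_W)/||\lambda_W||$ changes.
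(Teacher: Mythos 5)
Your proposal is correct and follows essentially the same route as the paper: identify the 1-PSs admitting a limit at $v$ with the integral points of the cone $C_{W_v}$, apply the Hilbert--Mumford criterion to get the semistability characterisation, and invoke \cite{hoskins_quivers} Lemma 2.13 together with the triviality of conjugation in $T$ to conclude that $\wedge^\rho(v)=\{\lambda_{W_v}\}$. The extra bookkeeping you supply (finiteness of $\cB$, local closedness of the weight-set loci, the closure relations) is consistent with what the paper leaves implicit via Theorem \ref{hess strat thm}.
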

\begin{proof}
By construction, $C_{W_v}$ is the cone of (real) 1-PSs $\lambda$ such that 
$\lim_{t \ra 0} \lambda(t) \cdot v$ exists. By the Hilbert--Mumford 
criterion, $v$ is $\rho$-semistable if and only if $(\lambda,\rho) \geq 0$ 
for all $\lambda \in C_{W_v}$; that is, if and only if $C_{W_v} \subseteq H_{\rho}$. 
Therefore, $v$ is $\rho$-semistable if and only if $W_v$ is $\rho$-semistable.

As the conjugation action is trivial, $[\lambda_W]=\lambda_W$ and every $\rho$-unstable 
point has a unique $\rho$-adapted primitive 1-PS. If $v$ is $\rho$-unstable, 
then $\lambda_{W_v}$ is $\rho$-adapted to $v$ by \cite{hoskins_quivers} Lemma 2.13.
\end{proof}

For a reductive group $G$, we fix a maximal torus $T$ of $G$ and positive Weyl 
chamber. Let $\cB$ denote the set of 1-PSs $\lambda_W$ corresponding to 
elements in this positive Weyl chamber where $W$ is a $\rho_T$-unstable set of weights.

\begin{cor}\label{cor index set from torus weights}
Let $G$ be a reductive group acting linearly on an affine variety $V$ with respect to 
a character $\rho : G \ra \GG_m$. Then the Hesselink stratification is given by
\[ V - V^{\rho-ss} = \bigsqcup_{\lambda_W \in \cB} S_{[\lambda_W]}.\]
\end{cor}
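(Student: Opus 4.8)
The plan is to derive the corollary from Theorem \ref{hess strat thm} together with the torus computation of Proposition \ref{prop strat for torus}, by showing that every conjugacy class $[\lambda]$ indexing a nonempty stratum of $V - V^{\rho-ss}$ is represented by some $\lambda_W$ in the chosen positive Weyl chamber with $W$ a $\rho_T$-unstable set of weights.

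First I would record the reduction of the instability measure to the maximal torus. Since $\rho$ is a character of $G$ and the norm $||-||$ is conjugation-invariant by construction, for any $g \in G$ and $\lambda \in X_*(T)$ we have $(\rho, g\lambda g^{-1}) = (\rho,\lambda)$ and $||g\lambda g^{-1}|| = ||\lambda||$, while $\lim_{t\to 0}(g\lambda g^{-1})(t)\cdot v$ exists if and only if $\lim_{t\to 0}\lambda(t)\cdot(g^{-1}v)$ exists. As the image of any $1$-PS of $G$ lies in a maximal torus and all such are conjugate, every $1$-PS is conjugate into $T$, so this yields $M^\rho_G(v) = \inf_{w \in Gv} M^\rho_T(w)$, where $M^\rho_T(w) := \inf\{(\rho,\lambda)/||\lambda|| : \lambda \in X_*(T),\ \lim_{t\to 0}\lambda(t)w \text{ exists}\}$ is the corresponding quantity for the restricted $T$-action. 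In particular $M^\rho_G(v) \leq M^\rho_T(v)$.

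Next I would identify the adapted $1$-PSs. Suppose $v \in S_{[\lambda]}$; by Theorem \ref{hess strat thm}(1) and the fact that every $1$-PS is conjugate into the closed positive Weyl chamber of $T$, after replacing $v$ by a suitable point of its $G$-orbit I may assume that some $\lambda \in \wedge^\rho(v)$ lies in this chamber. Then $\lambda$ is a torus $1$-PS for which $\lim_{t\to 0}\lambda(t)v$ exists, so it competes in the infimum defining $M^\rho_T(v)$, giving $(\rho,\lambda)/||\lambda|| = M^\rho_G(v) \leq M^\rho_T(v) \leq (\rho,\lambda)/||\lambda||$; hence $M^\rho_T(v) = M^\rho_G(v)$ and $\lambda$ is $\rho$-adapted to $v$ for the $T$-action. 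As $v$ is $G$-unstable we have $M^\rho_T(v) = M^\rho_G(v) < 0$, so $v$ is $T$-unstable and Proposition \ref{prop strat for torus} applies: its weight set $W_v$ is $\rho_T$-unstable and the unique $T$-adapted primitive $1$-PS is $\lambda_{W_v}$. Since $\lambda$ is primitive, $\lambda = \lambda_{W_v}$.

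Finally I would assemble the indexing. The previous step shows every index $[\lambda]$ of a nonempty stratum equals $[\lambda_W]$ for a $\rho_T$-unstable weight set $W$ with $\lambda_W$ in the positive Weyl chamber, so the index set $\cB$ of Theorem \ref{hess strat thm} is contained in the set of the corollary; the strata attached to elements of $\cB$ not arising in this way are empty and may be freely adjoined to the disjoint decomposition. Combined with Theorem \ref{hess strat thm} this yields $V - V^{\rho-ss} = \bigsqcup_{\lambda_W \in \cB} S_{[\lambda_W]}$, as claimed. I expect the main subtlety to be the reduction to the torus in the second step --- specifically verifying that a $G$-adapted $1$-PS lying in $T$ is automatically $T$-adapted, which rests on the conjugation-invariance of $\rho$ and $||-||$ and on the sandwiching inequality $M^\rho_G(v) \leq M^\rho_T(v) \leq (\rho,\lambda)/||\lambda||$ --- after which the torus case of Proposition \ref{prop strat for torus} supplies the remaining identifications.
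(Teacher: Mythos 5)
Your proposal is correct and follows essentially the same route as the paper: conjugate an adapted primitive $1$-PS into the (positive Weyl chamber of the) maximal torus, use the inequality $M^\rho_G \leq M^{\rho_T}_T$ together with the fact that the conjugated $1$-PS competes in the torus infimum to conclude it is $\rho_T$-adapted, and then invoke Proposition \ref{prop strat for torus} to identify it with $\lambda_{W_{v'}}$. Your write-up is somewhat more explicit about the conjugation-invariance of $(\rho,-)$ and $\|-\|$ and about landing in the positive Weyl chamber, but the argument is the same.
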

\begin{proof}
It is clear that the left hand side is contained in the right hand side. 
Conversely, suppose that $v \in V - V^{\rho-ss}$ and $\lambda$ is 
an primitive 1-PS of $G$ that is $\rho$-adapted to $v$. 
Then there exists $g \in G$ such 
that $\lambda':=g\lambda g^{-1} \in X_*(T)$ and, moreover, $\lambda'$ is 
$\rho$-adapted to $v':= g \cdot v$ for the $G$-action on $V$. As every 1-PSs of $T$ is a 
1-PSs of $G$, it follows that $M^\rho_G(v') \leq M_T^{\rho_T}(v')$. Therefore,
 $\lambda'$ is $\rho_T$-adapted to $v'$ for the $T$-action on $V$ and 
 $\lambda' = \lambda_W \in \cB$, where $W = W_{v'}$. In particular, we have that  
$v \in S_{[\lambda]}= S_{[\lambda_W]}$.
\end{proof}

This gives an algorithm to compute the Hesselink stratification: the index 
set $\cB$ is determined from the $T$-weights and, for each $[\lambda] \in \cB$, we 
compute $S_{[\lambda]}$ from $Z_\lambda$ using Theorem \ref{hess strat thm}.

\begin{rmk}
In general, the Hesselink stratification for $G$ and its maximal torus $T$ are not 
easily comparable. The $\rho$-semistable locus for the $G$-action is contained 
in the $\rho$-semistable locus for the $T$-action, but the $G$-stratification 
does not refine the $T$-stratification. Often there are more $T$-strata, 
since 1-PSs of $T$ may be conjugate in $G$ but not in $T$. 
\end{rmk}

\begin{ex}\label{ex grass}
Let $G:=\GL_r$ act on $V:=\text{Mat}_{r \times n} \cong \AA^{rn}$ by left 
multiplication, linearised with respect to the character $\rho := \det : G \ra \GG_m$. 
It is well known that the GIT quotient is 
\[ \text{Mat}_{r \times n}/\!/_{\det} \GL_r =\text{Gr}(r,n) \]
the Grassmannian of $r$-planes in $\AA^n$ and the GIT semistable locus is 
given by matrices of rank $r$. In this example, we show that the Hesselink 
stratification is given by rank.

We fix the maximal torus $T \cong \GG_m^r$ of diagonal matrices in 
$G$ and use the dot product on $\ZZ^r \cong X_*(T)$ to define a norm 
$|| -||$ on conjugacy classes of 1-PSs. 
The weights of the $T$-action on $V$ are $\chi_1, \dots , \chi_r$ where 
$\chi_i$ denotes the $i^\text{th}$ standard basis vector in $\ZZ^r$ and 
the Weyl group $S_r$ acts transitively on the $T$-weights. The 
restriction of the determinant character to $T$ is given by 
$\rho_T = (1, \dots , 1) \in \ZZ^r \cong X^*(T)$.

As the Weyl group is the full permutation group on the set of $T$-weights, 
it follows that $\lambda_W$ and $\lambda_{W'}$ are conjugate under $S_r$ whenever 
$|W| = |W'|$. Hence, for $k =0, \dots, r$, we let
\[W_k: = \{ \chi_1, \dots , \chi_k \}\]
and note that $W_k$ is $\rho_T$-unstable if and only if $k < r$ with 
corresponding $\rho_T$-adapted 1-PS
\[\begin{smallmatrix}\lambda_k := &(\underbrace{0, \dots, 0,} & 
\underbrace{-1, \dots, -1}).  \\ & k & r-k  \end{smallmatrix}\]
By Corollary \ref{cor index set from torus weights}, 
the index set for the stratification is $\cB= \{ \lambda_k : 0 \leq k \leq r-1 \}$. 
We use Theorem \ref{hess strat thm} to calculate the unstable strata. Since 
\[ V^{\lambda_k}_+ = V^{\lambda_k} = \left\{ v = \left( \begin{array}{ccccc} v_{11}
 & v_{12} & \cdots & \cdots & v_{1n} \\ \vdots & \cdots   & \cdots &\cdots &\vdots \\ 
 v_{k1} & v_{k2} &  \cdots &\cdots & v_{kn} \\ 0 & \cdots  & \cdots & \cdots & 0 \\ 
 \vdots & \cdots &  \cdots & \cdots & \vdots \\ 0 & \cdots &  \cdots & \cdots & 0 
 \end{array} \right) \in \text{Mat}_{r \times n} \right\} \cong \text{Mat}_{k \times n},\]
 we also have that $S_{\lambda_k} = Z_{\lambda_k}$. 
 By definition, $Z_{\lambda_k}$ is the GIT 
 semistable set for $G_{\lambda_k}\cong\GL_k \times \GL_{r-k}$ acting on 
$V^{\lambda_k}$ with respect to the character
\[ \begin{array}{rcc}\rho_{\lambda_k} := 
|| \lambda_k ||^2 \rho - (\lambda_k, \rho) \lambda_k = (r-k) \rho - (k-r) \lambda_k = 
 &(\underbrace{r-k, \dots, r-k,} & \underbrace{0, \dots, 0}).  \\ & k & r-k  \end{array} \]
As positive rescaling of the character does not alter the semistable locus, we can 
without loss of generality assume that $\rho_{\lambda_k}$ is the product of the 
determinant character on $\GL_k$ and the trivial character on $\GL_{r-k}$. 
Therefore, $Z_{\lambda_k}$ is the GIT semistable set for $\GL_k$ acting on 
$\text{Mat}_{k \times n} \cong V^{\lambda_k} $ with respect to $\det : \GL_k \ra \GG_m$; 
that is, $Z_{\lambda_k}$ is the set of matrices in $\text{Mat}_{k \times n}$ whose final 
$r-k$ rows are zero and whose top $k$ row vectors are linearly independent. 
Then, $S_{[\lambda_k]} = GS_{\lambda_k}$ is the locally closed subvariety of rank 
$k$ matrices and the Hesselink stratification is given by rank.
\end{ex}

\section{Stratifications for moduli of quiver representations}\label{sec quiver}

Let $Q=(V,A,h,t)$ be a quiver with vertex set $V$, arrow set $A$ and head 
and tail maps $h,t : A \ra V$ giving the direction of the arrows. 
A $k$-representation of a quiver $Q=(V,A,h,t)$ is a tuple $W = (W_v,\phi_a)$ 
consisting of a $k$-vector space $W_v$ for each vertex $v$ and 
a linear map $\phi_a : W_{t(a)} \ra W_{h(a)}$ for each arrow $a$. 
The dimension vector of $W$ is $ \dim W = (\dim W_v) \in \NN^V$. 

\subsection{Semistable quiver representations}

For quiver representations of a fixed dimension vector $d = (d_v) \in \NN^V$, 
King introduced a notion of 
semistability depending on a stability parameter 
$\theta= (\theta_v) \in \ZZ^V$ such that $\sum_{v\in V} \theta_v d_v = 0$. 

\begin{defn}[King, \cite{king}]\label{theta ss defn}
A representation $W$ of $Q$ of dimension $d$ is $\theta$-semistable if for all 
subrepresentations $W' \subset W$ we have 
$\theta(W'):= \sum_v \theta_v \dim W_v' \geq 0$. 
\end{defn}

We recall King's construction of moduli of $\theta$-semistable 
quiver representations \cite{king}. Let 
\[ \rep_d(Q) := \bigoplus_{a \in A} \text{Hom}(k^{d_{t(a)}},k^{d_{h(a)}}) 
\quad \text{and} \quad  G_d(Q) := \prod_{v \in V} \GL(d_v,k);\]
then $G_d(Q)$ acts on $\rep_d(Q)$ by conjugation. 
Since the diagonal subgroup $\GG_m \cong \Delta \subset G_d(Q)$, given by 
$t \in \GG_m \mapsto (tI_{d_v})_{v \in V} \in G_d(Q)$, acts trivially on $\rep_d(Q)$, 
we often consider the quotient group $\overline{G}_d(Q): = G_d(Q)/\Delta$. 
We note that every representation of $Q$ of dimension $d$ is isomorphic to 
an element of $\rep_d(Q)$ and, moreover, two representations in $\rep_d(Q)$ 
are isomorphic if and only if they lie in the same $\overline{G}_d(Q)$ orbit. 
Hence, the moduli stack $\cR ep_d(Q)$ of 
representations of $Q$ of dimension $d$ is isomorphic to the quotient stack of 
$\overline{G}_d(Q)$ acting on $\rep_d(Q)$:
\[ \cR ep_d(Q) \cong [\rep_d(Q)/\overline{G}_d(Q)].\]
The $G_d(Q)$-action on $\rep_d(Q)$ is linearised by $\rho_{\theta} :G_d(Q) \ra \GG_m$ where 
$\rho_\theta (g_v) := \Pi_v \det(g_v)^{\theta_v}$.

\begin{thm}[King,  \cite{king}]
The moduli space of $\theta$-semistable quiver representations is given by 
\[ M^{\theta-ss}_d(Q):=\rep_d(Q)/\!/_{\rho_\theta} G_d(Q).\]
\end{thm}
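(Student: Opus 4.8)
The plan is to realize the moduli space $M^{\theta-ss}_d(Q)$ as the GIT quotient of the affine variety $\rep_d(Q)$ by the reductive group $G_d(Q)$, linearised by the character $\rho_\theta$, and then to identify King's notion of $\theta$-semistability (Definition \ref{theta ss defn}) with GIT $\rho_\theta$-semistability in the sense of Definition \ref{defn GIT ss wrt char}. By \cite{mumford} Theorem 1.10, once we know that $G_d(Q)$ is reductive (it is a product of general linear groups) and that it acts linearly on the affine variety $\rep_d(Q)$ with respect to the character $\rho_\theta$, the GIT quotient
\[ \rep_d(Q)^{\rho_\theta\text{-}\mathrm{ss}} \ra \rep_d(Q)/\!/_{\rho_\theta} G_d(Q) = \proj \bigoplus_{n \geq 0} H^0(\rep_d(Q), L_{\rho_\theta}^{\otimes n})^{G_d(Q)} \]
is automatically a good quotient. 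Thus the content of the theorem is the matching of the two semistability notions, so that the points being quotiented are exactly the $\theta$-semistable representations and the quotient is a genuine moduli space of such objects.

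To carry this out, I would first invoke the Hilbert--Mumford criterion (stated in the excerpt) to reduce $\rho_\theta$-semistability to the numerical condition $(\rho_\theta, \lambda) \geq 0$ for every $1$-PS $\lambda$ of $G_d(Q)$ for which $\lim_{t \to 0} \lambda(t) \cdot v$ exists. The key dictionary is then: a $1$-PS $\lambda$ of $G_d(Q)$ determines a $\ZZ$-grading $W_v = \bigoplus_n W_{v,n}$ on each vector space, and the condition that $\lim_{t\to 0}\lambda(t)\cdot v$ exists is precisely the condition that the associated filtration by $\bigoplus_{n' \geq n} W_{v,n'}$ is a filtration of $W = v$ by subrepresentations. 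The next step is the numerical computation: for such a $\lambda$, one checks that $(\rho_\theta, \lambda) = \sum_v \theta_v \sum_n n \cdot \dim W_{v,n}$, and by Abel summation this rewrites as a nonnegative combination of the values $\theta(W^{(i)})$ on the subrepresentations $W^{(i)}$ appearing in the filtration. Hence $(\rho_\theta,\lambda)\geq 0$ for all admissible $\lambda$ if and only if $\theta(W')\geq 0$ for all subrepresentations $W' \subset W$, which is King's condition.

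The two directions of this equivalence go as follows. For one direction, given a destabilising subrepresentation $W'$ with $\theta(W') < 0$, I would build an explicit $1$-PS $\lambda$ (acting with weight $-1$ on a complement and weight $0$ on $W'$, say, or more symmetrically using the two-step filtration $0 \subset W' \subset W$) for which $\lim_{t\to 0}\lambda(t)\cdot v$ exists and $(\rho_\theta,\lambda) < 0$, contradicting $\rho_\theta$-semistability. For the converse, given any $\lambda$ with limit existing, one decomposes the associated filtration into its steps and applies the nonnegativity of $\theta$ on each subrepresentation appearing, together with the constraint $\sum_v \theta_v d_v = 0$ ensuring that $\rho_\theta$ descends appropriately to $\overline{G}_d(Q)$.

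The main obstacle I expect is bookkeeping rather than conceptual: correctly translating between cocharacters of the product group $G_d(Q)$, the induced weight decompositions on each $W_v$, and the resulting filtrations by subrepresentations, and then performing the Abel-summation identity cleanly so that the sign of $(\rho_\theta,\lambda)$ is manifestly governed by the values $\theta(W')$. One must also be careful that the existence of $\lim_{t\to 0}\lambda(t)\cdot v$ translates into the filtration steps being genuine subrepresentations (i.e.\ compatible with all the arrow maps $\phi_a$), which is exactly where the quiver structure enters; this compatibility is what forces the weight filtration to respect each $\phi_a : W_{t(a)} \to W_{h(a)}$. Since this is precisely King's original argument in \cite{king}, I would expect to cite it for the detailed verification and present only the dictionary and the key numerical identity.
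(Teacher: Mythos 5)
The paper offers no proof of this statement: it is quoted directly from King's work and used as a black box, so there is no in-text argument to compare against. Your outline correctly reproduces the standard argument of the cited source — Hilbert--Mumford reduction, the dictionary between $1$-PSs with existing limits and filtrations of $W$ by subrepresentations, and the Abel-summation identity expressing $(\rho_\theta,\lambda)$ as a nonnegative combination of the $\theta(W^{(i)})$ (using $\theta(W)=0$) — so it is consistent with how the paper intends the result to be justified.
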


In particular, King shows that the notion of $\theta$-semistability for 
quiver representations agrees with the notion of GIT semistability for 
$G_d(Q)$ acting on $\rep_d(Q)$ with respect to $\rho_\theta$.

\begin{rmk} We can also consider moduli spaces of $\theta$-semistable quiver 
representations for quivers with relations. A path in a quiver is a sequence of 
arrows in the quiver $(a_1, \dots, a_n)$ such that $h(a_i) = t(a_{i+1})$ and a 
relation is a linear combination of paths all of which start at some common vertex 
$v_t$ and end at a common vertex $v_h$. Let $\cR$ be a set of relations on $Q$ and let $\rep_d(Q,\cR)$ be the 
closed subvariety of $\rep_d(Q)$ consisting of quiver representations that satisfy 
the relations $\cR$. The moduli space of $\rho$-semistable representations of 
$(Q,\cR)$ is a closed subvariety of the moduli space of $\rho$-semistable 
representations of $Q$:
\[M^{\theta-ss}_d(Q,\cR):=\rep_d(Q,\cR)/\!/_{\rho_\theta}G_d(Q)\subset M^{\theta-ss}_d(Q).\]
\end{rmk}

\subsection{HN filtrations for quivers representations}\label{sec HN filt quiv}

We fix a quiver $Q$, a dimension vector $d \in \NN^V$ and a stability parameter 
$\theta= (\theta_v) \in \ZZ^V$ such that $\sum_v \theta_v d_v = 0$. To 
define a Harder--Narasimhan filtration, we need a notion of semistability for quiver 
representations of all dimension vectors and for this we use a parameter 
$\alpha = (\alpha_v) \in \NN^V$.

\begin{defn}\label{defn HN}
A representation $W$ of $Q$ is $(\theta,\alpha)$-semistable if for all 
$0 \neq W' \subset W$, we have
\[ \frac{\theta(W')}{\alpha(W')} \geq \frac{\theta(W)}{\alpha(W)} \]
where $\alpha(W) := \sum_{v \in V} \alpha_v \dim W_v$. 
A Harder--Narasimhan (HN) filtration of $W$ (with respect to $\theta$ and $\alpha$) 
is a filtration $0 = W^{(0)} \subset W^{(1)} \subset \cdots \subset W^{(s)} =W$ 
by subrepresentations, such that the quotient representations 
$W_i := W^{(i)}/W^{(i-1)}$ are $(\theta,\alpha)$-semistable and
\[ \frac{\theta(W_1)}{\alpha(W_1)} < \frac{\theta(W_2)}{\alpha(W_2)}
      <\cdots < \frac{\theta(W_s)}{\alpha(W_s)} .\]
The Harder--Narasimhan type of $W$ (with respect to $\theta$ and $\alpha$) is 
$\gamma(W) := (\dim W_1, \dots , \dim W_s)$.
\end{defn}

If $W$ is a representation of dimension $d$, then $\theta(W) = 0$ and so $W$ 
is $(\theta,\alpha)$-semistable if and only if it is $\theta$-semistable. 
The proof of the existence and uniqueness of the Harder--Narasimhan 
filtration of quiver representations is standard (for example, see \cite{reineke},  
for the case when $\alpha_v = 1$ for all $v$). 
By the existence and uniqueness of the HN-filtration with respect to $(\theta,\alpha)$, 
we have a decomposition
\[ \rep_d(Q) = \bigsqcup_{\gamma} R_{\gamma} \]
where $R_{\gamma}$ denotes the subset of representations with HN type $\gamma$. 
Reineke \cite{reineke} proves that $R_{\gamma}$ are locally closed subvarieties of 
$\rep_d(Q)$ (when $\alpha_v =1$ for all $v$; the general case is 
analogous). We refer to the above decomposition as the 
HN stratification of 
the representation space (with respect to $\theta$ and $\alpha$). 
As isomorphic quiver representations have the same HN type, each HN stratum 
$R_\gamma$ is invariant under the action of $\overline{G}_d(Q)$. Hence, we 
obtain a HN stratification of the moduli stack of quiver representations:
\[\cR ep_d(Q)=\bigsqcup_{\gamma}\cR ep^{\gamma}_d(Q)\]
where $\cR ep^{\gamma}_d(Q)\cong [R_{\gamma}/\overline{G}_d(Q)]$. We emphasise 
again that this stratification depends on both $\theta$ and $\alpha$. In particular, 
by varying the parameter $\alpha$, we obtain different HN stratifications.

\begin{rmk}\label{rmk HN relns}
For a quiver with relations $(Q,\cR)$, there is also a HN stratification of 
the representation space $\rep_d(Q,\cR)$. If $W$ is a representation of $Q$ 
that satisfies the relations $\cR$, then any subrepresentation of $W$ also 
satisfies these relations. Therefore, the HN strata of $\rep_d(Q,\cR)$ are 
the intersection of the HN strata in $\rep_d(Q)$ with $\rep_d(Q,\cR)$.
\end{rmk}

\subsection{The HN stratification is the Hesselink stratification}

Let $T$ be the maximal torus of $G_d(Q)$ given by the product of the maximal 
tori $T_v \subset \GL(d_v)$ of diagonal matrices. For a rational 1-PS 
$\lambda \in X_*(G)_{\QQ}$, there is a unique  
$C>0$ such that $C\lambda \in X_*(G)$ is primitive.
 
\begin{defn}
For a HN type $\gamma = (d_1, \dots ,d_s)$ of a quiver representation of 
dimension $d$, let $\lambda_\gamma$ be the unique primitive 1-PS associated 
to the rational 1-PS $\lambda'_\gamma = (\lambda'_{\gamma,v})$ of $T$ given by
\[ \lambda'_{\gamma,v}(t) = \text{diag} (t^{r_1}, \dots  , t^{r_1},t^{r_2}, 
\dots  , t^{r_2}, \dots \dots, t^{r_s}, \dots  , t^{r_s} )\]
where the rational weight $r_i:= - \theta(d_i)/\alpha(d_i)$ appears $(d_i)_v$ times. 
\end{defn}

\begin{rmk}
For $\gamma \neq \gamma'$, the conjugacy classes of $\lambda_\gamma$ and 
$\lambda_{\gamma'}$ are distinct.
\end{rmk}

Let $|| - ||_\alpha$ be the norm on 1-PSs of $G_d(Q)$ associated to 
$\alpha \in \NN^V$ (cf. Example \ref{ex norms}). 
For a quiver without relations, the following result is
 \cite{hoskins_quivers} Theorem 5.5. 
 We can deduce the case with relations from this result using
 Theorem \ref{hess strat thm} and Remark \ref{rmk HN relns}.

\begin{thm}\label{quiver HN is Hess}  Let $(Q,\cR)$ be a quiver with relations 
and $d \in \NN^V$ a dimension vector. For stability parameters $\theta \in \ZZ^V$ 
satisfying $\sum_{v \in V} \theta_v d_v = 0$ and $\alpha \in \NN^V$, we let
\[\rep_d(Q,\cR) =   \bigsqcup_{\gamma} R_{\gamma} \quad \quad \text{and} 
\quad \quad  \rep_d(Q,\cR)  = \bigsqcup_{[\lambda]} S_{[\lambda]} \]
denote the HN stratification with respect to $(\theta,\alpha)$ and the 
Hesselink stratification with respect to $\rho_\theta$ and $||-||_\alpha$ 
respectively. Then $R_\gamma = S_{[\lambda_\gamma]}$ and the stratifications 
coincide.
\end{thm}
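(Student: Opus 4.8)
The plan is to deduce the statement directly from the corresponding result for the quiver $Q$ \emph{without} relations, namely \cite{hoskins_quivers} Theorem 5.5, which asserts that $R_\gamma = S_{[\lambda_\gamma]}$ inside the affine space $\rep_d(Q)$. The two ingredients bridging the relations-free case to the case with relations are: (i) the compatibility of the Hesselink stratification with passage to a closed $G$-invariant subvariety, which is exactly the content of the proof of Theorem \ref{hess strat thm}; and (ii) the compatibility of the HN stratification with the relations, recorded in Remark \ref{rmk HN relns}. First I would fix notation distinguishing the strata computed in $\rep_d(Q)$ from those in $\rep_d(Q,\cR)$, writing $S_{[\lambda]}^{Q}, R_\gamma^{Q}$ for the former and $S_{[\lambda]}^{Q,\cR}, R_\gamma^{Q,\cR}$ for the latter. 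Crucially, the same character $\rho_\theta$ and the same norm $||-||_\alpha$ are used in both settings, so the 1-PSs $\lambda_\gamma$ are unchanged.

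For the Hesselink side, observe that $\rep_d(Q,\cR)$ is a closed $G_d(Q)$-invariant subvariety of the affine space $\rep_d(Q)$, cut out by the relations $\cR$. Hence Theorem \ref{hess strat thm}, applied with $V = \rep_d(Q,\cR)$ and $W = \rep_d(Q)$, yields
\[ S_{[\lambda]}^{Q,\cR} = \rep_d(Q,\cR) \cap S_{[\lambda]}^{Q} \]
for every conjugacy class $[\lambda]$, together with the induced partial-order and closure properties (1)--(4) of that theorem. For the HN side, Remark \ref{rmk HN relns} gives
\[ R_\gamma^{Q,\cR} = \rep_d(Q,\cR) \cap R_\gamma^{Q}. \]
Combining these two identities with the relations-free equality $R_\gamma^{Q} = S_{[\lambda_\gamma]}^{Q}$ then gives
\[ R_\gamma^{Q,\cR} = \rep_d(Q,\cR) \cap R_\gamma^{Q} = \rep_d(Q,\cR) \cap S_{[\lambda_\gamma]}^{Q} = S_{[\lambda_\gamma]}^{Q,\cR}, \]
which is the desired equality of strata. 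Since distinct HN types $\gamma$ give distinct conjugacy classes $[\lambda_\gamma]$, the two decompositions of $\rep_d(Q,\cR)$ are the same decomposition under the relabelling $\gamma \leftrightarrow [\lambda_\gamma]$, with the HN partial order on types matching the order $<$ on the $[\lambda_\gamma]$ via property (4); thus the stratifications coincide.

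The step requiring genuine care --- though it is already subsumed by the proof of Theorem \ref{hess strat thm} --- is verifying that the Hesselink data of a point $v \in \rep_d(Q,\cR)$ are intrinsic to $\rep_d(Q,\cR)$ and do not change when $v$ is viewed inside $\rep_d(Q)$. The essential point is that, because $\rep_d(Q,\cR)$ is closed and $G_d(Q)$-invariant, a 1-PS $\lambda$ admits a limit $\lim_{t\to 0}\lambda(t)\cdot v$ in $\rep_d(Q)$ if and only if it does so in $\rep_d(Q,\cR)$, the limit being a limit of points of the closed invariant set. Consequently the infimum $M^{\rho_\theta}_{G_d(Q)}(v)$ and the set $\wedge^{\rho_\theta}(v)$ of adapted 1-PSs agree whether computed in $V$ or in $W$, which is exactly what underlies the intersection formula above. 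I do not expect any obstacle beyond this: once the two intersection descriptions are in hand, the result is a formal consequence of the relations-free theorem.
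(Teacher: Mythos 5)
Your proposal is correct and follows exactly the route the paper takes: the paper itself deduces Theorem \ref{quiver HN is Hess} from \cite{hoskins_quivers} Theorem 5.5 by intersecting with the closed $G_d(Q)$-invariant subvariety $\rep_d(Q,\cR)$, using Theorem \ref{hess strat thm} for the Hesselink side and Remark \ref{rmk HN relns} for the HN side. Your additional verification that the adapted 1-PS data of a point of $\rep_d(Q,\cR)$ is unchanged when computed in $\rep_d(Q)$ is exactly the content already established in the proof of Theorem \ref{hess strat thm}, so nothing further is needed.
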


The moduli stack admits a finite stratification by HN types or, equivalently, 
Hesselink strata
\[ \cR ep_d(Q,\cR) = \bigsqcup_{\tau} \cR ep_d^\gamma(Q,\cR) = 
\bigsqcup_{[\lambda]} [S_{[\lambda]}/\overline{G}_d(Q)].\]

\section{Projective GIT and Hesselink stratifications}

Let $G$ be a reductive group acting on a projective scheme $X$ with respect to an 
ample $G$-linearisation $L$. The projective GIT quotient 
\[X^{ss}(L) \ra X/\!/_L G:= \proj \bigoplus_{n \geq 0} H^0(X, {L}_\rho^{\otimes n})^G\]
is a good quotient of the open subscheme $X^{ss}(L) $ of $ X$ of $L$-semistable points. 
For a 1-PS $\lambda$ of $G$ and $x \in X$, we 
let $\mu^L(x,\lambda) $ denote the weight of the $\GG_m$ action on the fibre of 
$L$ over the fixed point $\lim_{t \ra 0} \lambda(t) \cdot x$. 

\begin{thm}[Hilbert--Mumford criterion \cite{mumford}]
A point $x$ is $L$-semistable if and only we have $\mu^L(x,\lambda) \geq 0$ for 
all 1-PSs $\lambda : \GG_m \ra G$.
\end{thm}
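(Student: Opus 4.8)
The statement is the classical projective Hilbert--Mumford criterion, so my plan is to follow Mumford's strategy of passing to the affine cone and reducing orbit-closure membership to a single one-parameter subgroup. First I would reduce to a linear problem. Since $L$ is ample, after replacing it by a positive power I may assume $L$ is very ample and that $G$ acts linearly on $V := H^0(X,L)^*$ with a $G$-equivariant embedding $X \hookrightarrow \PP(V)$ and $L = \cO_{\PP(V)}(1)|_X$. Choosing a lift $\widehat x \in V \setminus \{0\}$ of $x$ to the affine cone $\widehat X$ over $X$, the essential translation is
\[ x \text{ is } L\text{-semistable} \iff 0 \notin \overline{G \cdot \widehat x} . \]
This follows from the definition of semistability applied to the cone (Definition \ref{defn GIT ss wrt char}, the cone being affine): an invariant section $\sigma \in H^0(X,L^{\otimes n})^G$ corresponds to a $G$-invariant homogeneous function of degree $n$ on $\widehat X$, and $\sigma(x) \neq 0$ says this function is nonzero at $\widehat x$. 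Since $G$ is reductive, invariants separate disjoint closed $G$-invariant subsets, so a positive-degree homogeneous invariant nonvanishing at $\widehat x$ exists exactly when $\widehat x$ and $0$ lie in distinct orbit closures, that is, when $0 \notin \overline{G \cdot \widehat x}$.

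Next I would express $\mu^L(x,\lambda)$ in terms of the $\lambda$-weights of $\widehat x$. Diagonalising the $\GG_m$-action induced by a 1-PS $\lambda$ on $V$, I write $\widehat x = \sum_r \widehat x_r$ with $\widehat x_r$ of weight $r$; then $\lim_{t \to 0} \lambda(t) \cdot x = [\widehat x_{r_0}]$ with $r_0 = \min\{ r : \widehat x_r \neq 0 \}$, and since the fibre of $\cO(1)$ over a point is dual to the corresponding line in $V$, the weight of $\lambda$ on $L_{x_0}$ is $\mu^L(x,\lambda) = -r_0$. Consequently $\mu^L(x,\lambda) \geq 0$ for all $\lambda$ if and only if no 1-PS has all weights of $\widehat x$ strictly positive, equivalently if and only if no 1-PS satisfies $\lim_{t \to 0} \lambda(t) \cdot \widehat x = 0$.

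These two steps make one implication immediate: if $x$ is semistable then $0 \notin \overline{G \cdot \widehat x}$, so in particular no 1-PS drives $\widehat x$ to the origin, whence $\mu^L(x,\lambda) \geq 0$ for every $\lambda$ by the weight computation. The substantive converse is the contrapositive: assuming $x$ unstable, so that $0 \in \overline{G \cdot \widehat x}$, I must produce a \emph{single} rational 1-PS $\lambda$ with $\lim_{t \to 0} \lambda(t) \cdot \widehat x = 0$, which by the second step forces $\mu^L(x,\lambda) < 0$.

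I expect this last point to be the main obstacle and the heart of the theorem: passing from $0$ merely lying in the orbit closure to $0$ being attained along one one-parameter subgroup. A priori the origin could be approached by a complicated path rather than by a cocharacter, and it is precisely reductivity of $G$ that rules this out. For $G = \SL_n$ this is Hilbert's original lemma, proved via the valuative criterion of properness combined with the Iwahori decomposition of $\SL_n(k((t)))$; for a general reductive group it is Mumford's theorem (alternatively Kempf's or Birkes' argument). I would invoke this fundamental lemma to supply the destabilising cocharacter, after which the numerical criterion, in the stated sign convention, follows directly from the weight computation of the second step.
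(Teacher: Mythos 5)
The paper states this theorem without proof, simply citing \cite{mumford}, and your sketch follows exactly the strategy of that cited source: pass to the affine cone, identify $L$-semistability with $0 \notin \overline{G \cdot \widehat x}$ via separation of disjoint closed invariant sets by invariants of the reductive group, compute $\mu^L(x,\lambda)$ as minus the minimal $\lambda$-weight of the lift, and invoke the fundamental lemma (Hilbert for $\SL_n$, Mumford/Kempf/Birkes in general, via the Iwahori decomposition and the valuative criterion) to replace orbit-closure degeneration to $0$ by a single destabilising 1-PS. Your account is correct, with the one genuinely hard step properly isolated and attributed rather than elided, so there is nothing to add beyond the minor standard caveat that identifying sections of $L^{\otimes n}$ with homogeneous functions on the cone may require passing to a further power of $L$ when the embedding is not projectively normal.
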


Fix a norm $|| - ||$ on the set $\overline{X}_*(G)$ of conjugacy classes of 1-PSs 
such that $|| - ||^2$ is $\ZZ$-valued. 

\begin{defn}[Kempf, \cite{kempf}]
For $x \in X$, let
\[ M^L(x) := \inf\left\{\frac{\mu^L(x,\lambda)}{|| \lambda ||} : \:\text{1-PSs}
     \: \lambda \text{ of } G  \right\}.\]
A 1-PS $\lambda$ is $L$-adapted to an $L$-unstable point $x$ if
\[ M^L(x) = \frac{\mu^L(x,\lambda)}{|| \lambda ||} . \]
Let $\wedge^L(x) $ denote the set of primitive 1-PSs which are $L$-adapted to $x$. 
\end{defn}

\begin{defn}(Hesselink, \cite{hesselink})
For a conjugacy class $[\lambda]$ of 1-PSs of $G$ and $ d \in \QQ_{> 0}$, we let
\[S_{[\lambda],d}:= \{x: M^L(x)=-d \: \text{ and }  \wedge^\rho(x) \cap [\lambda] \neq \emptyset\}.\]
\end{defn}

Let $p_\lambda : X \ra X^\lambda$ denote the retraction onto the $\lambda$-fixed locus 
given by $x \mapsto \lim_{t \ra 0} \lambda(t) \cdot x$ and let 
$X_d^\lambda \subset X^\lambda$ 
be the union of components on which $\mu^L(-,\lambda)=-d||\lambda||$. 
Let $G_\lambda$ be the subgroup of elements of $G$ that commute with $\lambda$ and let 
$P(\lambda) $ be the parabolic subgroup of elements $g \in G$ such that  
$\lim_{t \ra 0} \lambda(t)g \lambda(t)^{-1}$ exists in $G$. 

\begin{thm}[cf. \cite{hesselink,kirwan,ness}]\label{hess thm proj}
Let $G$ be a reductive group acting on a projective scheme $X$ with respect to an 
ample $G$-linearisation $L$ and let $|| - ||$ be a norm on conjugacy classes of 1-PSs of $G$. 
Then there is a Hesselink stratification
 \[ X  -X^{ss}(L)= \bigsqcup_{([\lambda],d)} S_{[\lambda],d} \]
into finitely many disjoint $G$-invariant locally closed subschemes of $X$. Moreover:
\begin{enumerate}
\item $S_{[\lambda],d} = GY^{\lambda}_d \cong G \times_{P(\lambda)} Y^\lambda_d$ where 
$Y^{\lambda}_d =\{ x \in X: M^L(x) = -d \: \text{ and } \: \lambda  \in \wedge^\rho(x)\}$;
\item $Y^\lambda_d = p_\lambda^{-1}(Z_d^\lambda)$ where 
$Z_d^\lambda= \{ x \in X^\lambda: M^L(x) = -d \: \text{ and } \: \lambda  \in \wedge^\rho(x)\}$;
\item $Z_d^\lambda$ is the GIT-semistable locus for the reductive group $G_\lambda$ acting on 
$X_d^\lambda$ with respect to a modified linearisation $L_\beta$;
\item $\partial S_{[\lambda],d} \cap S_{{[\lambda'],d'}} \neq \emptyset$ only if $d > d'$.
\end{enumerate}
\end{thm}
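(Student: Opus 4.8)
The plan is to transcribe the architecture of the affine Theorem~\ref{hess strat thm} into the projective setting, replacing the linear weight $(\rho,\lambda)$ by the Hilbert--Mumford weight $\mu^L(x,\lambda)$ and the affine limit set by the attracting set of $\lambda$. The decomposition itself rests on a single input, Kempf's optimal destabilisation theorem \cite{kempf}: for each $L$-unstable $x$ the infimum $M^L(x)$ is attained, and the set $\wedge^L(x)$ of adapted primitive 1-PSs is a single conjugacy class, cut out by a canonically associated parabolic $P(x)$. Granting this, the rule $x \mapsto ([\lambda],d)$ with $d=-M^L(x)$ is well defined on $X-X^{ss}(L)$, so the strata $S_{[\lambda],d}$ are automatically disjoint and cover the unstable locus. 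For finiteness I would argue as in Corollary~\ref{cor index set from torus weights}, conjugating every adapted 1-PS into the positive Weyl chamber of a fixed maximal torus $T$, so that the admissible pairs $([\lambda],d)$ are pinned down by the finitely many $T$-weights of the action on $L$; this is the combinatorial reduction underlying the analyses of Kirwan and Ness \cite{kirwan,ness}.

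For the structural statements I would proceed in the order (2), (3), (1). Property~(2) follows because $\mu^L(x,\lambda)$ depends only on the limit $p_\lambda(x)$, so $d$ is constant along the fibres of $p_\lambda$, and once one checks that the optimal value is preserved under the retraction, the defining conditions of $Y^\lambda_d$ and $Z^\lambda_d$ match up to give $Y^\lambda_d = p_\lambda^{-1}(Z^\lambda_d)$. Property~(3) is the computational heart: on the fixed component $X^\lambda_d$ the 1-PS $\lambda$ acts through a central character of weight $-d\|\lambda\|$, and twisting $L$ by the character dual to $\lambda$ (the projective analogue of the modified character $\rho_\lambda=\|\lambda\|^2\rho-(\rho,\lambda)\lambda^*$ of Theorem~\ref{hess strat thm}(3)) yields a $G_\lambda$-linearisation $L_\beta$ for which this weight vanishes; one then checks that $M^L(x)=-d$ together with $\lambda\in\wedge^L(x)$ translates exactly into $G_\lambda$-semistability of $x\in X^\lambda_d$ for $L_\beta$, so that $Z^\lambda_d=(X^\lambda_d)^{G_\lambda\text{-ss}}(L_\beta)$. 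Property~(1) is then formal: $Y^\lambda_d$ is $P(\lambda)$-invariant since $P(\lambda)$ preserves limits and the value of $\mu^L(-,\lambda)$, while Kempf's uniqueness forces any $g\in G$ returning the adapted data of a point of $Y^\lambda_d$ into $Y^\lambda_d$ to lie in $P(\lambda)$; hence the sweep $G\times_{P(\lambda)}Y^\lambda_d\to X$ is injective with image $GY^\lambda_d=S_{[\lambda],d}$, and properness of $G/P(\lambda)$ upgrades it to a locally closed immersion.

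Local closedness of the strata then follows from openness of the semistable locus $Z^\lambda_d\subseteq X^\lambda_d$ together with the Bialynicki--Birula description of $p_\lambda$, which for a projective scheme realises $\{x:\lim_{t\to 0}\lambda(t)\cdot x\text{ exists}\}$ as a locally closed subscheme retracting onto $X^\lambda$; the $G$-sweep of~(1) preserves local closedness because $G/P(\lambda)$ is projective. Finally, property~(4) encodes the closure order, with $X^{ss}(L)$ the lowest stratum; I would deduce it from the semicontinuity of $M^L$ along specialisations. Evaluating a \emph{fixed} adapted 1-PS on a degenerating family and using that $\mu^L(-,\lambda)$ varies semicontinuously in $x$ bounds how the optimal value $d=-M^L$ can jump across the boundary, so that $\partial S_{[\lambda],d}$ can meet only strata of a comparable instability type, exactly as in Theorem~\ref{hess strat thm}(4).

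I expect the two genuine obstacles to be Kempf's theorem, which supplies both the uniqueness of the adapted conjugacy class and the parabolic $P(x)$ underpinning~(1), and the weight bookkeeping in~(3) identifying $Z^\lambda_d$ with a $G_\lambda$-semistable locus; everything else is, as in the affine case, a formal consequence once these are in place. Since $X$ is only a projective scheme, I would also take care that the Bialynicki--Birula retraction and the contraction $G\times_{P(\lambda)}(-)$ behave well without smoothness hypotheses, which is precisely the generality in which Hesselink \cite{hesselink} works and is why I would draw the scheme-theoretic parts from that source rather than from the smooth treatment of \cite{kirwan}.
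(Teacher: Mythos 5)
The paper does not prove this theorem: it is stated with ``cf.~\cite{hesselink,kirwan,ness}'' and imported wholesale from the literature, so there is no in-paper argument to compare yours against. Judged on its own terms, your sketch is a faithful reconstruction of the standard proof from exactly those sources: Kempf's optimality theorem supplies the well-definedness of $x\mapsto([\lambda],d)$ and the parabolic $P(\lambda)$ needed for the sweep $G\times_{P(\lambda)}Y^\lambda_d\cong S_{[\lambda],d}$ in (1); the torus-weight combinatorics gives finiteness; and the character twist in (3) is the right mechanism. Two caveats. First, the step you flag as ``once one checks that the optimal value is preserved under the retraction'' is not a routine verification but the technical heart of the theorem (Ness's lemma; Kirwan's Lemma 12.21): that $\mu^L(x,\lambda)=\mu^L(p_\lambda(x),\lambda)$ is immediate from the definition of $\mu^L$, but the equality $M^L(x)=M^L(p_\lambda(x))$ and the transfer of adaptedness of $\lambda$ from $p_\lambda(x)$ back to $x$ require the full strength of Kempf's theory together with the structure of $P(\lambda)$; also $\wedge^L(x)$ is a single $P(x)$-orbit rather than a full $G$-conjugacy class, which is precisely what makes the injectivity argument in (1) work. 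Second, on (4): your semicontinuity argument, carried out (evaluate a 1-PS adapted to the generic member of a degenerating family at the limit point), yields $M^L(x_0)\leq M^L(x_t)$, i.e.\ boundary points are at least as unstable, so $\partial S_{[\lambda],d}$ can meet $S_{[\lambda'],d'}$ only for $d'\geq d$. This is consistent with the affine Theorem~\ref{hess strat thm}(4), with the Grassmannian example, and with $X^{ss}(L)$ being the open lowest stratum, but it is the opposite of the inequality $d>d'$ as printed in the statement; be aware that what you would actually prove is $d<d'$.
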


Let $S_0:= X^{ss}(L)$; then we obtain a Hesselink stratification of $X$ 
\[ X = \bigsqcup_{\beta \in \cB} S_\beta\]
where the lowest stratum is $S_0 $ and the higher strata are indexed by pairs $\beta = ([\lambda],d)$ as above. We note that if $\beta = ([\lambda],d)$ indexes a non-empty 
stratum, then $d ||\lambda|| \in \ZZ$ as $\mu(-,\lambda)$ is $\ZZ$-valued.

\begin{rmk}\label{new indices for H strat}
An unstable Hesselink index $\beta =([\lambda],d)$ determines a rational 1-PS
\[\lambda_\beta := \frac{d}{||\lambda||} \lambda\]
(where $d/||\lambda||$ is rational, as $||\lambda||^2$ and $d||\lambda||$ are 
both integral). 
In fact, we can recover $([\lambda],d)$ from $\lambda_\beta$, as 
$d = - ||\lambda_\beta||$ and $\lambda $ is the unique primitive 1-PS lying on 
the ray spanned by $\lambda_\beta$. 
\end{rmk}

\begin{rmk}\label{rmk on hess strat}
The additional index $d$ is redundant in the affine case, as it can be 
determined from the character $\rho$ and 1-PS $\lambda$ (more precisely, we have $d = - (\rho,\lambda)/||\lambda||$). Moreover, projective Hesselink stratifications share the following properties with affine Hesselink 
stratifications. 
\begin{enumerate}
\item If $X \subset \PP^n$ and $G$ acts via a representation 
$\rho : G \ra \GL_{n+1}$, then the unstable indices can be 
determined from the weights of a maximal torus of $G$; 
for details, see \cite{kirwan}.
\item For a closed $G$-invariant subscheme $X' $ of $X$, the Hesselink 
stratification of $X'$ is the intersection of the Hesselink stratification of $X$ with $X'$.
\item The Hesselink strata may not be connected; 
however, if $Z_{d,i}^\lambda$ denote the connected components of $Z_d^\lambda$, then $S_{[\lambda],d,i} := G p_\lambda^{-1}(Z_{d,i}^\lambda)$ are the connected components of $S_{[\lambda],d}.$
\end{enumerate}
\end{rmk}

\section{Stratifications for moduli of sheaves}\label{sec sheaves}

Let $(X, \cO_X(1))$ be a projective scheme of finite type over $k$ with a very ample line bundle. 

\subsection{Preliminaries}
For a sheaf $\cE$ over $X$, we let
$\cE(n) := \cE \otimes \cO_X(n)$ and let
\[ P(\cE,n) = \chi(\cE(n)) = \sum_{i=0}^n (-1)^i \dim H^i(\cE(n))\]
be the Hilbert polynomial of $\cE$ with respect to $\cO_X(1)$. 
We recall that the degree of $P(\cE)$ 
is equal to the dimension of $\cE$ (i.e., the dimension of 
the support of $\cE$) and a sheaf $\cE$ is pure if all its non-zero subsheaves 
have the same dimension as $\cE$. For a non-zero sheaf $\cE$, 
the leading coefficient $r(\cE)$ 
of $P(\cE)$ is positive and we define the reduced Hilbert polynomial of $\cE$ to be 
$P^{\red}(\cE):=P(\cE)/\text{r}(\cE)$.

\begin{defn}[Castelnuovo-Mumford \cite{mumfordcurves}]
A sheaf $\cE$ over $X$ is $n$-regular if 
\[ H^{i}(\cE(n-i)) = 0  \quad \text{ for all } \: i > 0. \]
\end{defn}

This is an open condition and, by Serre's Vanishing Theorem, any sheaf is $n$-regular 
for $n > \!> 0$. Furthermore, any bounded family of sheaves is $n$-regular for $n > \!> 0$. 

\begin{lemma}[cf. \cite{mumfordcurves}]
For a $n$-regular sheaf $\cE$ over $X$, we have the following results.
\begin{enumerate}
\renewcommand{\labelenumi}{\roman{enumi})}
\item $\cE$ is $m$-regular for all $m \geq n$.
\item $\cE(n)$ is globally generated with vanishing higher cohomology, i.e., the evaluation map 
$H^0(\cE(n)) \otimes \cO_X(-n) \ra \cE$ is surjective and $H^i(\cE(n)) = 0$ for $i > 0$.
\item The natural multiplication maps 
$H^0(\cE(n)) \otimes H^0(\cO_X(m-n)) \ra H^0(\cE(m))$ are surjective for 
all $m \geq n$.
\end{enumerate}
If we have an exact 
sequence of sheaves over $X$ 
\[ 0 \ra \cF' \ra \cF \ra \cF'' \ra 0,\]
such that $\cF'$ and $\cF''$ are both $n$-regular, then $\cF$ is also $n$-regular.
\end{lemma}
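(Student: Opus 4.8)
The plan is to reduce everything to projective space and then run a double induction via generic hyperplane sections. Since $\cO_X(1)$ is very ample, it defines a closed immersion $i : X \hookrightarrow \PP^N$ with $\cO_X(1) = i^*\cO_{\PP^N}(1)$; by the projection formula and exactness of $i_*$, the sheaf $i_*\cE$ is $m$-regular on $\PP^N$ if and only if $\cE$ is $m$-regular on $X$, cohomology is preserved, and $i_*\cE(m)$ being globally generated on $\PP^N$ forces $\cE(m)$ to be globally generated on $X$. The multiplication maps in (iii) also descend, because the restriction $H^0(\cO_{\PP^N}(m-n)) \ra H^0(\cO_X(m-n))$ makes the $\PP^N$-multiplication factor through the $X$-multiplication, so surjectivity upstairs gives surjectivity downstairs. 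Hence it suffices to prove the statement on $\PP^N$ (now writing $\cE$ for $i_*\cE$).

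On $\PP^N$, the engine is a short exact sequence from a generic hyperplane. As $k$ is algebraically closed, hence infinite, and $\cE$ has finitely many associated points, I would choose a hyperplane $H = \{x_0 = 0\}$ avoiding them, so that cutting by $x_0$ gives $0 \ra \cE(-1) \ra \cE \ra \cE_H \ra 0$ with $\cE_H := \cE \otimes \cO_H$ a sheaf on $H \cong \PP^{N-1}$. Twisting by $\cO(m-i)$ and taking cohomology shows at once that $\cE_H$ is $m$-regular on $\PP^{N-1}$ whenever $\cE$ is $m$-regular on $\PP^N$. I then prove, by induction on $N$ (the base case $N = 0$ being immediate), the implication that $m$-regular implies $(m+1)$-regular: twisting the sequence by $\cO(m+1-i)$, the term $H^i(\cE(m-i))$ vanishes by $m$-regularity and $H^i(\cE_H(m+1-i))$ vanishes because $\cE_H$ is $(m+1)$-regular by the inductive hypothesis, so $H^i(\cE(m+1-i)) = 0$ for $i > 0$. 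Iterating gives part (i), and then part (i) applied with $(n+i)$-regularity yields $H^i(\cE(n)) = H^i(\cE((n+i)-i)) = 0$ for $i > 0$, the cohomological half of (ii).

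For part (iii), the key claim — again proved by induction on $N$ — is that $H^0(\cE(m)) \otimes H^0(\cO(1)) \ra H^0(\cE(m+1))$ is surjective when $\cE$ is $m$-regular. Here I would use the diagram coming from restriction to $H$: the maps $H^0(\cE(m)) \ra H^0(\cE_H(m))$ and $H^0(\cE(m+1)) \ra H^0(\cE_H(m+1))$ are surjective (their cokernels inject into $H^1$ of a twist that vanishes by regularity), the map $H^0(\cO(1)) \ra H^0(\cO_H(1))$ is surjective, and the multiplication map on $H$ is surjective by the inductive hypothesis applied to the $m$-regular sheaf $\cE_H$; a short diagram chase, using that the kernel of restriction on $H^0(\cE(m+1))$ is exactly $x_0 \cdot H^0(\cE(m))$, then produces the required lift. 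Iterating this claim and using $H^0(\cO_{\PP^N}(k)) = \text{Sym}^k H^0(\cO(1))$ gives surjectivity of $H^0(\cE(m)) \otimes H^0(\cO(k)) \ra H^0(\cE(m+k))$ for all $k \geq 0$, which is (iii) on $\PP^N$. The global generation in (ii) then comes for free: surjectivity of these multiplication maps says exactly that the graded module $\bigoplus_{k \geq 0} H^0(\cE(m+k))$ is generated in degree zero over the homogeneous coordinate ring, and sheafifying the resulting surjection $H^0(\cE(m)) \otimes \cO \onto \cE(m)$ (Serre's equivalence $\tilde{\Gamma}_* = \mathrm{id}$) gives global generation. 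Finally, the extension statement is immediate from the long exact cohomology sequence of $0 \ra \cF' \ra \cF \ra \cF'' \ra 0$ twisted by $\cO_X(n-i)$: the outer terms $H^i(\cF'(n-i))$ and $H^i(\cF''(n-i))$ vanish for $i > 0$, forcing $H^i(\cF(n-i)) = 0$.

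The main obstacle is the inductive hyperplane machinery on $\PP^N$ — both arranging the generic hyperplane so that the basic sequence is short exact and, above all, the diagram chase proving surjectivity of multiplication by $H^0(\cO(1))$, since every other assertion is either a formal consequence of cohomology vanishing or a descent along $i : X \hookrightarrow \PP^N$. I would isolate that multiplication claim as the one genuinely new input, with the regularity increment as its cohomological prerequisite.
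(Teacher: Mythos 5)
The paper offers no proof of this lemma, simply citing Mumford's lectures, and your argument is exactly the classical Castelnuovo--Mumford proof from that reference: reduction to $\PP^N$ via the closed immersion given by $\cO_X(1)$, a generic hyperplane section avoiding the associated points of $\cE$, and induction on $N$ for both the regularity increment and the surjectivity of multiplication by $H^0(\cO(1))$, with the extension statement falling out of the long exact sequence. The proposal is correct and complete.
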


\subsection{Construction of the moduli space of semistable sheaves}\label{sec simp constr}

In this section, we outline Simpson's construction \cite{simpson} of the moduli space of semistable 
sheaves on $(X,\cO_X(1))$ with Hilbert polynomial $P$. 
We define an ordering $ \leq $ on rational polynomials in one variable by 
$P \leq Q$ if $P(x) \leq Q(x)$ for all sufficiently large $x$. For polynomials of a 
fixed degree with positive leading coefficient, this is equivalent to the 
lexicographic ordering on the coefficients.

\begin{defn}\label{defn ss}
 A pure sheaf $\cF$ on $X$ is semistable (in the sense of Gieseker) if 
 for all non-zero subsheaves $\cE \subset \cF$, we have 
 $P^{\red}(\cE) \leq P^{\red}(\cF).$
\end{defn}

By the Simpson--Le Potier bounds (cf. \cite{simpson} Theorem 1.1), 
the set of semistable sheaves with Hilbert polynomial $P$ is bounded; therefore, 
for $n >\!> 0$, all semistable sheaves with Hilbert polynomial $P$ are $n$-regular. 

Let $V_n := k^{P(n)}$ be the trivial $P(n)$-dimensional vector space and 
let $\quot(V_n \otimes \cO_X(-n), P)$ denote the 
Quot scheme parametrising quotients sheaves of $V_n \otimes  \cO_X(-n)$ 
with Hilbert polynomial $P$. Let $Q_n$ denote the open subscheme of this 
Quot scheme consisting of quotients $q : V_n \otimes \cO_X(-n) \onto \cE$ such that 
$H^0(q(n))$ is an isomorphism, let $Q_n^{\pur} \subset Q_n$ denote the open subscheme 
consisting of quotient sheaves that are pure and let $R_n$ denote the closure of 
$Q_n^{\pur}$ in the quot scheme.

Let $\cE$ be a $n$-regular sheaf; then $\cE(n)$ is globally generated 
and $\dim H^0(\cE(n)) = P(\cE,n)$. 
In particular, every $n$-regular sheaf on $X$ with Hilbert polynomial 
$P$ can be represented as a quotient sheaf in $Q_n$ by choosing an isomorphism 
$H^0(\cE(n)) \cong V_n$ and using the surjective evaluation map 
$H^0(\cE(n) ) \otimes \cO_X(-n) \onto \cE$. 

The group $G_n:=\GL(V_n)$ acts on $V_n$ and $Q_n$ such that the $G_n$-orbits in 
$Q_n$ are in bijection with isomorphism classes of sheaves in $Q_n$. 
As the diagonal $\GG_m$ acts trivially, we often consider the 
action of $\SL(V_n)$. The action is linearised using Grothendieck's 
embedding of the Quot scheme into a Grassmannian: the corresponding line
bundle is given by
\[ L_{n,m} := \det(\pi_{*}(\cU_n \otimes \pi_X^*\cO_X(m))) \]
for $m >\!> n$, where $\cU_n$ denotes the universal quotient sheaf on the Quot scheme. 

\begin{thm}[Simpson \cite{simpson}, Theorem 1.21]\label{simthm}
For $m>\!>n>\!>0$, the moduli space of semistable sheaves on $X$ 
with Hilbert polynomial $P$ is the GIT quotient of $R_n$; that is,
\[ M^\text{ss}(X,P)=R_n /\!/_{L_{n,m}} \SL(V_n).\]
\end{thm}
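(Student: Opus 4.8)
The plan is to follow Simpson's argument \cite{simpson}, whose core is a comparison, via the Hilbert--Mumford criterion, between GIT semistability for the $\SL(V_n)$-action on $R_n$ linearised by $L_{n,m}$ and Gieseker semistability of the quotient sheaf (Definition \ref{defn ss}). First I would set up the parameter space using boundedness. By the Simpson--Le Potier bounds, the family of semistable sheaves with Hilbert polynomial $P$ is bounded, so there is an $n_0$ such that every such sheaf is $n$-regular for $n \geq n_0$. For such $\cE$, the preceding regularity lemma gives $\dim H^0(\cE(n)) = P(n)$ and surjectivity of the evaluation map, so a choice of isomorphism $H^0(\cE(n)) \cong V_n$ presents $\cE$ as a point $q \in Q_n$, with $\GL(V_n)$-orbits recording isomorphism classes. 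Since semistable sheaves are pure, these points lie in $Q_n^{\pur} \subseteq R_n$, so $R_n$ is a projective parameter space containing every semistable sheaf.

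The technical heart is the weight computation. A 1-PS $\lambda$ of $\GL(V_n)$ determines a weight decomposition $V_n = \bigoplus_w V_w$, hence a filtration of $V_n$ and, through the evaluation map, a filtration of $\cE$ by subsheaves $\cE_{\leq w}$ generated by the $V_{\leq w}$. Grothendieck's embedding of the Quot scheme into $\Gr(V_n \otimes H^0(\cO_X(m-n)), P(m))$, which defines $L_{n,m}$, lets me read $\mu^{L_{n,m}}(q,\lambda)$ off the $\lambda$-weights on $H^0(q(m))$. For $m \gg n$ the multiplication maps of part (iii) of the regularity lemma are surjective, so the relevant dimensions are exactly the Hilbert values $P(\cE_{\leq w},m)$, and I obtain an asymptotic expansion in $m$ whose leading behaviour is governed by comparisons of $P(\cE',m)/\dim H'$ against $P(m)/P(n)$, where $\cE'$ is the subsheaf generated by a subspace $H' \subseteq V_n$.

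Using this weight formula I would then match the two stability notions, showing that for $m \gg n \gg 0$ a point $q \in R_n$ is GIT-semistable precisely when $H^0(q(n))$ is an isomorphism and $\cE$ is Gieseker semistable. In one direction, a destabilising subsheaf $\cF \subset \cE$ (or a failure of $H^0(q(n))$ to be injective) produces, via $H' = H^0(\cF(n))$, a 1-PS of negative weight; in the other, semistability of $\cE$ forces all weights to be nonnegative, where it suffices to test saturated subsheaves and boundedness allows a single $m$ to serve uniformly. Since $R_n$ is projective and $L_{n,m}$ ample, $R_n /\!/_{L_{n,m}} \SL(V_n)$ is projective; the boundary $R_n \setminus Q_n^{\pur}$ consists of unstable points and so is discarded, closed orbits in the semistable locus correspond to polystable sheaves, and $S$-equivalence matches the identification of orbits in the quotient. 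Hence the quotient corepresents the moduli functor and is $M^{\text{ss}}(X,P)$.

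I expect the main obstacle to be precisely this weight computation together with the uniformity hidden in the phrase ``$m \gg n$'': one must control the higher-order terms in the $m$-expansion of $\mu^{L_{n,m}}(q,\lambda)$ and, crucially, choose a single $m$ that simultaneously works for every sheaf in the bounded family and for all of the (boundedly many) subsheaves that could destabilise. The purity-and-closure bookkeeping---verifying that passing to the closure $R_n$ only adds unstable points, so that the semistable locus already lies in $Q_n^{\pur}$---is a secondary but necessary subtlety.
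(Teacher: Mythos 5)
The paper does not prove this statement itself but cites it as Simpson's Theorem 1.21, and your outline is an accurate reconstruction of exactly that argument: boundedness via the Le Potier--Simpson estimates, presentation of $n$-regular sheaves as points of $Q_n^{\pur}\subseteq R_n$, the Hilbert--Mumford weight computation through the Grassmannian embedding defining $L_{n,m}$, the equivalence of GIT semistability with $H^0(q(n))$ being an isomorphism and $\cE$ being Gieseker semistable, and the matching of closed orbits with $S$-equivalence. This is the same route the paper defers to, so the proposal is correct in approach, with the understood caveat that the hard uniformity estimates in ``$m>\!>n>\!>0$'' are acknowledged rather than carried out.
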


In his proof, Simpson shows, for $m>\!>n>\!>0$ (depending on  $X$ and $P$), 
that an element $q : V_n \otimes \cO_X(-n) \onto \cE$ in $R_n$ is GIT semistable 
if and only if $q$ belongs to the open subscheme $Q_n^{ss}$ of $Q_n$ consisting of quotient sheaves 
$q: V_n \otimes \cO_X(-n) \onto \cE$ such that $\cE$ is a semistable sheaf.

\subsection{The Hesselink stratification of the Quot scheme}\label{sec Hess quot}

Associated to the action of $\SL(V_n)$ on $\quot(V_n \otimes \cO_X(-n),P)$ with 
respect to 
$L_{n,m}$ and the norm $||-||$ coming from the dot product on the diagonal torus $T$ 
(cf. Example \ref{ex norms}), there is a Hesselink stratification
\[ \quot(V_n \otimes \cO_X(-n),P) = \bigsqcup_{\beta \in \cB} S_\beta \]
In this section, for fixed $n$, we describe this stratification of  
$\quot := \quot(V_n \otimes \cO_X(-n),P)$; this completes the partial description 
of the Hesselink stratification on $R_n$ given in \cite{hoskinskirwan}. 

By Remark \ref{new indices for H strat}, the unstable indices $\beta$ 
can equivalently be viewed as conjugacy classes of rational 1-PSs 
$[\lambda_\beta]$ of $\SL(V_n)$. We take a representative $\lambda_\beta \in X_*(T)$ 
and, by using the Weyl group action, assume that the weights are decreasing; that is,
\[ \lambda_\beta(t) = \diag (t^{r_1}, \dots , t^{r_1},t^{r_2}, 
\dots , t^{r_2} , \dots , t^{r_s} ,\dots ,t^{r_s}) \]
where $r_i$ are strictly decreasing rational numbers that occur with multiplicities $l_i$. Then 
$\beta$ is equivalent to the decreasing sequence of rational weights 
$r(\beta): = (r_1, \dots , r_s)$ and multiplicities $l(\beta):=(l_1, \dots ,l_s)$. 
Since $\lambda_\beta $ is a rational 1-PSs of $\SL(V_n) $, we note that 
\[ \sum_{i=1}^s l_i = \dim V_n =P(n)\quad\quad \text{and} \quad\quad \sum_{i=1}^s r_il_i = 0.\]

We first describe the fixed locus for $\lambda_\beta$. The 1-PS $\lambda_\beta$ 
determines a filtration of $V:=V_n$
\[ 0 = V^{(0)} \subset V^{(1)} \subset \cdots \subset V^{(s)} =V, 
\quad \text{where} \:\: V^{(i)}:=k^{l_1 + \cdots +l_i} \]
with $V^i := V^{(i)}/V^{(i-1)}$ of dimension $l_i$.
For a quotient $q: V \otimes \cO_X(-n) \onto \cE$ in $\quot$, we let 
$\cE^{(i)}:=q(V^{(i)}\otimes \cO_X(-n))$; then we have exact sequences of quotient sheaves
\[\xymatrix{ 0 \ar[r] & V^{(i-1)} \otimes \cO_X(-n) \ar@{->>}[d]^{q^{(i-1)}} \ar[r] & 
V^{(i)} \otimes \cO_X(-n)\ar@{->>}[d]^{q^{(i)}}  \ar[r] & V^i \otimes \cO_X(-n) 
\ar@{->>}[d]^{q^{i}} \ar[r] & 0  \\ 0 \ar[r] & \cE^{(i-1)}  \ar[r] & \cE^{(i)}  
\ar[r] & \cE^i:= \cE^{(i)}/\cE^{(i-1)} \ar[r] & 0.}\]
We fix an isomorphism $V \cong \oplus_{i=1}^s V^i$; then via this 
isomorphism we have  
$\lim_{t \ra 0} \lambda_\beta(t) \cdot q = \oplus_{i=1}^s q_i $ 
by \cite{huybrechts} Lemma 4.4.3. 

\begin{lemma}\label{lemma on fixed locus}
For $\beta=([\lambda],d)$ with $r(\beta)=(r_1, \dots, r_s)$ and $l(\beta)=(l_1, \dots, l_s)$ as above, we have
\[ \quot^{\lambda} \: \: = \bigsqcup_{(P_1, \dots ,P_s) \: : \: 
 \sum_{i=1}^s P_i = P } \: \: \:    \prod_{i=1}^s \quot(V^i \otimes \cO_X(-n),P_i).\]
and
\[\quot^{\lambda}_d \:\:\cong
\bigsqcup_{\begin{smallmatrix}(P_1, \dots ,P_s)\: : \: \sum_{i=1}^s P_i = P, 
\\ \sum_{i=1}^s r_i P_i(m) + r_i^2 l_i = 0 \end{smallmatrix}}
 \: \: \: \prod_{i=1}^s \quot(V^i \otimes \cO_X(-n),P_i).\]
\end{lemma}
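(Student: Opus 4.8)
The plan is to identify the $\lambda_\beta$-fixed locus of the Quot scheme explicitly, using the fact that a quotient $q$ is fixed by $\lambda_\beta$ precisely when it is compatible with the weight decomposition, and then to single out the component $\quot^\lambda_d$ by computing the $\lambda_\beta$-weight $\mu^{L_{n,m}}(-,\lambda_\beta)$ on each piece. For the first displayed formula, I would argue as follows. The $1$-PS $\lambda_\beta$ acts on $V = \bigoplus_{i=1}^s V^i$ with weight $r_i$ on $V^i$. Given a point $q \in \quot^\lambda$, the computation $\lim_{t\to 0}\lambda_\beta(t)\cdot q = \bigoplus_{i=1}^s q_i$ recalled above (via \cite{huybrechts} Lemma 4.4.3) shows that a fixed point is exactly a quotient that splits as a direct sum $q = \bigoplus_{i=1}^s q_i$, where each $q_i : V^i \otimes \cO_X(-n) \onto \cE^i$ is a quotient sheaf. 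Writing $P_i := P(\cE^i)$, additivity of Hilbert polynomials in the exact sequences of the filtration forces $\sum_{i=1}^s P_i = P$. Conversely any tuple $(q_i)$ with $\sum_i P_i = P$ assembles to a fixed point, and this correspondence is an isomorphism of schemes, giving the claimed decomposition of $\quot^\lambda$ into a disjoint union over tuples $(P_1,\dots,P_s)$ of the products $\prod_{i=1}^s \quot(V^i\otimes\cO_X(-n),P_i)$.

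For the second formula, the task is to determine which of these components lie in $\quot^\lambda_d$, i.e.\ those on which $\mu^{L_{n,m}}(-,\lambda_\beta) = -d\,\|\lambda_\beta\|$. The plan is to compute the $\lambda_\beta$-weight of the fibre of $L_{n,m} = \det(\pi_*(\cU_n\otimes\pi_X^*\cO_X(m)))$ at the fixed point $\bigoplus_i q_i$. Since $L_{n,m}$ is a determinant of the pushforward of the universal quotient twisted by $\cO_X(m)$, its fibre at the fixed point is (up to the $\SL$-normalisation) the top exterior power of $\bigoplus_i H^0(\cE^i(m))$, on which $\lambda_\beta$ acts with total weight $\sum_{i=1}^s r_i\dim H^0(\cE^i(m)) = \sum_{i=1}^s r_i P_i(m)$, for $m$ large enough that $\dim H^0(\cE^i(m)) = P_i(m)$. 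One must then correct for the passage from $\GL(V_n)$ to $\SL(V_n)$: the character by which $\GL(V_n)$ acts on $L_{n,m}$ differs from the $\SL$-linearisation by a multiple of the determinant, and accounting for this contributes the term $\sum_{i=1}^s r_i^2 l_i$. Imposing $\mu^{L_{n,m}}(\bigoplus_i q_i,\lambda_\beta) = -d\,\|\lambda_\beta\|$ then becomes the single linear condition $\sum_{i=1}^s r_i P_i(m) + r_i^2 l_i = 0$ cutting out $\quot^\lambda_d$ among the components of $\quot^\lambda$, as claimed.

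The main obstacle I expect is the bookkeeping in the weight computation of the second part: correctly tracking the normalising shift between the $\GL(V_n)$- and $\SL(V_n)$-linearisations so as to produce exactly the term $r_i^2 l_i$, and verifying that for $m\gg n$ the relevant higher cohomology vanishes so that $\dim H^0(\cE^i(m)) = P_i(m)$ and the fibre of $L_{n,m}$ really is the determinant of $\bigoplus_i H^0(\cE^i(m))$ with the stated weight. By contrast, the first decomposition is essentially formal once one invokes the splitting $\lim_{t\to 0}\lambda_\beta(t)\cdot q = \bigoplus_i q_i$ and the additivity of Hilbert polynomials. I would therefore present the first formula quickly and devote the care to pinning down the constant $d$ via the explicit $\lambda_\beta$-weight on $L_{n,m}$, which is precisely the input Theorem \ref{hess thm proj} needs in order to identify $\quot^\lambda_d$ as a union of components of $\quot^\lambda$.
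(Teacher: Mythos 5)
Your treatment of the first decomposition is correct and is the same as the paper's: the fixed locus is identified via the splitting $\lim_{t\to 0}\lambda_\beta(t)\cdot q = \oplus_{i=1}^s q_i$ and additivity of Hilbert polynomials, exactly as in the discussion preceding the lemma.

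The second half has a genuine gap, in precisely the place you flag as the main obstacle. The term $\sum_{i=1}^s r_i^2 l_i$ does \emph{not} arise from any normalising shift between the $\GL(V_n)$- and $\SL(V_n)$-linearisations: such a shift is a twist by a power of the determinant character, whose pairing with $\lambda_\beta$ is a multiple of $\sum_{i=1}^s r_i l_i = 0$ (and would in any case be linear, not quadratic, in the weights $r_i$). So the fibre weight at the fixed point is simply $\mu(q,\lambda_\beta) = \sum_{i=1}^s r_i P_i(m)$, with no correction term. The quadratic term comes instead from the right-hand side of the defining condition for $\quot^\lambda_d$: the relevant components are those on which $\mu(-,\lambda) = -d|| \lambda ||$ for the \emph{primitive} $\lambda$, and since $\lambda_\beta = (d/|| \lambda ||)\lambda$ this is equivalent to $\mu(-,\lambda_\beta) = -d^2 = -|| \lambda_\beta ||^2 = -\sum_{i=1}^s r_i^2 l_i$. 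Combining with the fibre weight gives $\sum_{i=1}^s r_i P_i(m) = -\sum_{i=1}^s r_i^2 l_i$, which is the stated condition. As written, your accounting would set $\sum_{i=1}^s \left(r_i P_i(m) + r_i^2 l_i\right)$ equal to $-d|| \lambda_\beta ||$; since $-d|| \lambda_\beta || = -\sum_{i=1}^s r_i^2 l_i \neq 0$ in general, this either double-counts the quadratic term or tacitly assumes the right-hand side vanishes, and the derivation does not close. Replacing the purported linearisation correction by the explicit evaluation of $d|| \lambda_\beta ||$ repairs the argument and recovers the paper's proof.
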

\begin{proof} 
The description of the $\lambda$-fixed locus follows from the discussion 
above. By definition,  
$\quot^{\lambda}_d$ is the union of fixed locus components on which 
$\mu(-,\lambda)=-d ||\lambda||$ or, equivalently, 
\[ \mu(-,\lambda_\beta) = - ||\lambda_\beta||^2 = - \sum_{i=1}^s r_i^2 l_i,\]
since $\lambda_\beta = (d/|| \lambda||) \lambda$. For a $\lambda$-fixed quotient sheaf 
$q : V \otimes \cO_X(-n) \onto \oplus_{i=1}^s \cE^i$, we have that
$ \mu(q,\lambda_\beta) = \sum_{i=1}^s r_i P(\cE^i,m)$
by \cite{huybrechts} Lemma 4.4.4 and so this completes the proof.
\end{proof}

By Theorem \ref{hess thm proj}, for $\beta = ([\lambda],d)$ as above, we 
can construct the stratum $S_\beta$ from the limit set $Z_d^\lambda$ which is the 
GIT semistable set for the subgroup
\[ G_\lambda \cong \left\{ (g_1, \dots , g_s) \in \prod_{i=1}^s \GL({V^i}) 
: \prod_{i=1}^s \det g_i = 1 \right\}\]
acting on $\quot^{\lambda}_d$ with respect to a modified linearisation $L_\beta$, 
obtained by twisting the original linearisation 
$L = L_{n,m}$ by the (rational) character $\chi_\beta : G_\lambda \ra \GG_m$ 
where 
\[ \chi_\beta(g_1, \dots , g_s) =\prod_{i=1}^s (\det g_i)^{r_i}.\]

\begin{prop}\label{Zdlambda}
Let $\beta=([\lambda],d)$ with $r(\beta)=(r_1, \dots, r_s)$ and 
$l(\beta)=(l_1, \dots, l_s)$ as above; then
\[ Z_d^\lambda\: \: \cong \bigsqcup_{(P_1, \dots ,P_s) } 
\prod_{i=1}^s \quot(V^i \otimes \cO_X(-n),P_i)^{\SL(V^i)-ss}(L_{n,m})\]
where this is a union over tuples $(P_1, \dots ,P_s)$ such that $\sum_{i=1}^s P_i = P$ and 
\begin{equation}\label{num cond} r_i = \frac{P(m)}{P(n)} - \frac{P_i(m)}{l_i} 
\quad \quad \text{for } \: i = 1, \dots , s.
\end{equation}
\end{prop}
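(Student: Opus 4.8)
The plan is to unwind the description of $Z_d^\lambda$ supplied by Theorem \ref{hess thm proj}(3): it is the GIT-semistable locus for $G_\lambda$ acting on $\quot^\lambda_d$ with respect to the twisted linearisation $L_\beta = L_{n,m}\otimes\chi_\beta$. Fix a component of $\quot^\lambda_d$ indexed by a tuple $(P_1,\dots,P_s)$ as in Lemma \ref{lemma on fixed locus}. The first step is to identify the restriction of the polarisation there. Since on this component the universal quotient splits as $\bigoplus_{i=1}^s \cU^i$, additivity of $\pi_*$ and of the determinant give
\[ L_{n,m}\big|_{\prod_i \quot(V^i\otimes\cO_X(-n),P_i)} \cong \boxtimes_{i=1}^s \det\pi_*(\cU^i\otimes\pi_X^*\cO_X(m)), \]
the external tensor product of the analogous Simpson polarisations $L_{n,m}$ on the factors $\quot(V^i\otimes\cO_X(-n),P_i)$.

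Next I would separate the $G_\lambda$-action into its semisimple and central parts, using that $G_\lambda = \big(\prod_i \SL(V^i)\big)\cdot T'$ is an almost-direct product, where $T' = \{(t_iI_{V^i}): \prod_i t_i^{l_i}=1\}$ is the central torus of $\prod_i\GL(V^i)$ intersected with $G_\lambda$. The character $\chi_\beta$ is trivial on $\prod_i\SL(V^i)$, so the $\prod_i\SL(V^i)$-semistable locus for $L_\beta$ coincides with that for $\boxtimes_i L_{n,m}$, which by the standard factorisation of semistability for a product group acting on a product with an external tensor product linearisation is exactly $\prod_{i=1}^s \quot(V^i\otimes\cO_X(-n),P_i)^{\SL(V^i)-ss}(L_{n,m})$. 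The central torus is the main remaining point: a scalar $(t_iI_{V^i})$ sends each quotient to an isomorphic one, so $(\GG_m)^s$, and in particular $T'$, acts trivially on $\quot^\lambda_d$. Consequently a component can carry $G_\lambda$-semistable points only if $T'$ acts with weight zero on $L_\beta$ there, and on such a component $G_\lambda$-semistability reduces to $\prod_i\SL(V^i)$-semistability, since $G_\lambda$ is connected and $G_\lambda/T'$ shares an identity component with the image of $\prod_i\SL(V^i)$.

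It then remains to compute the $(\GG_m)^s$-weight on $L_\beta$ over the $(P_1,\dots,P_s)$-component and to extract the condition that it vanishes on $T'$; this weight computation is the hard part. As $\cU^i$ is a quotient of $V^i\otimes\cO_X(-n)$, the scalar $t_i$ acts on $\cU^i$ with a single weight, and hence on $\det\pi_*(\cU^i(m))$, a line bundle of rank $P_i(m)$ for $m\gg0$, with weight $P_i(m)$; together with the weight $r_il_i$ coming from $\chi_\beta(t_iI_{V^i})=t_i^{r_il_i}$, the $i$-th factor of $(\GG_m)^s$ acts on $L_\beta$ with weight $P_i(m)+r_il_i$. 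The character $\sum_i (P_i(m)+r_il_i)\,e_i^*$ restricts trivially to $T'=\ker(\sum_i l_i e_i^*)$ precisely when it is a scalar multiple of $\sum_i l_i e_i^*$, i.e. $P_i(m)+r_il_i = c\,l_i$ for a common constant $c$. Summing over $i$ and using $\sum_i P_i(m)=P(m)$, $\sum_i r_il_i=0$ and $\sum_i l_i = P(n)$ forces $c = P(m)/P(n)$, which rearranges to the asserted condition $r_i = \frac{P(m)}{P(n)}-\frac{P_i(m)}{l_i}$. (One checks this per-index condition implies the constraint $\sum_i r_iP_i(m)+r_i^2l_i=0$ of Lemma \ref{lemma on fixed locus}, consistent with $Z_d^\lambda\subset\quot^\lambda_d$.) Combining the three steps, the components of $\quot^\lambda_d$ failing this condition contribute nothing to $Z_d^\lambda$, while on those satisfying it $Z_d^\lambda$ is the product of the $\SL(V^i)$-semistable loci, as claimed.
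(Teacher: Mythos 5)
Your argument is correct and follows essentially the same route as the paper: both proofs use that the central torus of $G_\lambda$ acts trivially on the fixed locus but with weight $P_i(m)+r_i l_i$ on the $i$-th factor of $L_\beta$, so the Hilbert--Mumford criterion forces $P_i(m)+r_il_i = C l_i$ with $C=P(m)/P(n)$, and on components where this holds the $G_\lambda$-semistable locus reduces to the product of the $\SL(V^i)$-semistable loci. The only cosmetic difference is that you make the external-tensor-product decomposition of $L_{n,m}$ and the kernel description of $T'$ explicit, where the paper cites \cite{hoskinskirwan} Lemma 6.6 for the factorisation of semistability.
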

\begin{proof}
The centre 
$ Z(G_\lambda)  \cong \{ (t_1, \dots , t_s) \in (\GG_m)^s : \Pi_{i=1}^s t_i^{l_i} = 1\}$ 
acts trivially on $\quot^{\lambda} $ and acts on the fibre of $L$ over $q \in \quot^\lambda$, 
lying in a component indexed by $(P_1, \dots , P_s)$, by a character 
\[ \chi_q : Z(G_\lambda) \ra \GG_m \quad \text{ given by } \quad 
\chi_{q}(t_1, \dots , t_s)= \prod_{i=1}^s t_i^{P_i(m)}. \]
Therefore, $Z(G_\lambda)$ acts on the fibre of 
$L_\beta$ over $q$ by $\chi_q \chi_\beta(t_1, \dots , t_s) = \prod_{i=1}^s t_i^{P_i(m) +r_i l_i}$. 
By the Hilbert--Mumford criterion, $q$ is unstable for the action of $G_\lambda$ 
with respect to $L_\beta$ unless $\chi_q \chi_\beta$ is trivial; i.e., 
there is a constant $C$ such that $ C l_i = P_i(m) + r_i l_i$, 
for $i=1, \dots , s$. In this case
\[ C P(n) =\sum_{i=1}^s C l_i = \sum_{i=1}^s P_i(m) + r_i l_i = P(m)\]
and we see that the conditions given at (\ref{num cond}) are necessary 
for $q$ to belong to $Z_d^\lambda$.

We suppose that $(P_1, \dots ,P_s)$ is a tuple satisfying $\sum_{i=1}^s P_i = P$ and 
the conditions given at (\ref{num cond}). 
Since $G_\lambda$ is, modulo a finite group, 
$\Pi_{i=1}^s \SL(V^i) \times Z(G_\lambda)$ and $Z(G_\lambda)$ acts on both 
\[ \prod_{i=1}^s \quot(V^i \otimes \cO_X(-n),P_i)\]
and $L_\beta$ trivially, the semistable locus for $G_\lambda$ is equal to the 
semistable locus for $\Pi_{i=1}^s \SL(V^i)$. As $\Pi_{i=1}^s \SL(V^i)$-linearisations, we have that $L_\beta=L$. 
It then follows that 
\[ \left(\prod_{i=1}^s \quot(V^i \otimes \cO_X(-n),P_i) \right)^{\Pi_{i=1}^s \SL(V^i)-ss}(L)
= \prod_{i=1}^s \quot(V^i \otimes \cO_X(-n),P_i)^{\SL(V^i)-ss}(L)\]
(for example, see \cite{hoskinskirwan} Lemma 6.6) which completes the proof of the proposition.
\end{proof}

The following corollary is an immediate consequence of Theorem \ref{hess thm proj}.

\begin{cor}\label{cor Hess quot}
For $\beta$ as above, the scheme $S_\beta$ parametrises quotients 
$q : V \otimes \cO_X(-n) \onto \cE$ with a filtration 
$0 = W^{(0)} \subset W^{(1)} \subset \cdots \subset W^{(s)} =V$ 
such that, for $ i =1 , \dots , s$, we have
\begin{enumerate}
\item $\dim W^i = l_i$, where $W^{i}:=W^{(i)}/W^{(i-1)}$,
\item $P(n){P(\cE^i,m)}=l_i({P(m)} - r_iP(n)) $, and
\item the quotient sheaf $q^i : W^i \otimes \cO_X(-n) \onto \cE^i$ is 
$\SL(W^i)$-semistable with respect to $L_{n,m}$,
\end{enumerate} 
where $\cE^{(i)}:=q(W^{(i)} \otimes \cO_X(-n))$ and $\cE^i:=\cE^{(i)}/\cE^{(i-1)}$. 
Furthermore, $S_\beta = \SL(V)Y_d^\lambda$ where 
$Y^d_\lambda$ parametrises quotients $q$ with a filtration $W^\bullet$ as above such that $W^\bullet = V^\bullet$.
\end{cor}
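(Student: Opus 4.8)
The plan is to read off conditions (1)--(3) directly from Theorem \ref{hess thm proj} once the fixed-point data have been made explicit by Lemma \ref{lemma on fixed locus} and Proposition \ref{Zdlambda}; since those two results carry out all of the computation, the argument here is purely a translation between the abstract Kempf--Hesselink description of the strata and the language of filtrations.

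First I would apply Theorem \ref{hess thm proj}(1) to write $S_\beta = \SL(V) Y_d^\lambda$ and Theorem \ref{hess thm proj}(2) to identify $Y_d^\lambda = p_\lambda^{-1}(Z_d^\lambda)$. Thus a quotient $q : V \otimes \cO_X(-n) \onto \cE$ lies in $Y_d^\lambda$ exactly when its limit $p_\lambda(q) = \lim_{t \ra 0} \lambda_\beta(t) \cdot q$ lies in $Z_d^\lambda$. By the discussion preceding Lemma \ref{lemma on fixed locus}, this limit is the direct sum $\bigoplus_{i=1}^s q^i$ of the associated graded quotients $q^i : V^i \otimes \cO_X(-n) \onto \cE^i$ determined by the standard flag $V^\bullet$ together with the induced filtration $\cE^{(i)} = q(V^{(i)} \otimes \cO_X(-n))$ on $\cE$. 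Since $\dim V^i = l_i$ by construction of $\lambda_\beta$, this is condition (1) in the case $W^\bullet = V^\bullet$.

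Next I would invoke Proposition \ref{Zdlambda}, which says that $\bigoplus_i q^i \in Z_d^\lambda$ precisely when each $q^i$ is $\SL(V^i)$-semistable with respect to $L_{n,m}$, giving condition (3), and the Hilbert polynomials $P_i := P(\cE^i)$ satisfy $r_i = P(m)/P(n) - P_i(m)/l_i$. Clearing denominators rewrites this last equality as $P(n) P(\cE^i, m) = l_i(P(m) - r_i P(n))$, which is condition (2). This yields the stated description of $Y_d^\lambda$, and in particular the final clause of the corollary: $Y_d^\lambda$ is the locus of those $q$ whose flag is the standard one $W^\bullet = V^\bullet$.

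Finally, to pass from $Y_d^\lambda$ to $S_\beta = \SL(V) Y_d^\lambda$, I would act by an arbitrary $g \in \SL(V)$, which carries the standard flag $V^\bullet$ to a general flag $W^\bullet = g \cdot V^\bullet$ of the same dimension type while replacing the graded data $(V^i, q^i, \cE^i)$ by an isomorphic copy. As conditions (1)--(3) are formulated solely in terms of this graded data, they are unaffected, so $S_\beta$ is exactly the set of quotients admitting some flag $W^\bullet$ satisfying (1)--(3). The only point needing care -- and the reason the corollary is nonetheless immediate -- is that (1)--(3) are intrinsic to $W^\bullet$ rather than to the particular $g$ witnessing membership in $S_\beta$; this holds because acting by $g$ does not change the graded quotient sheaves $\cE^i$ up to isomorphism, leaving both the numerical constraint and the semistability of the $q^i$ intact.
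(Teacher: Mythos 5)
Your proposal is correct and follows exactly the route the paper intends: the paper gives no written proof, stating only that the corollary is an immediate consequence of Theorem \ref{hess thm proj}, and your argument simply makes explicit how parts (1)--(3) of that theorem combine with Lemma \ref{lemma on fixed locus} and Proposition \ref{Zdlambda} to yield conditions (1)--(3), with the $\SL(V)$-translation handling general flags. Nothing is missing.
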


\begin{rmk}\label{refine Hess}
We see from the above description of the Hesselink strata that the strata are not always connected (cf. 
Remark \ref{rmk on hess strat}) and it is natural to further decompose the limit 
sets $Z_d^\lambda$ as
\[ Z_d^\lambda = \bigsqcup_{\underline{P} \in \cC_\beta} Z_{\underline{P}} 
\quad \quad \text{where } \:
Z_{\underline{P}} \cong \prod_{i=1}^s \quot(V^i \otimes \cO_X(-n),P_i)^{\SL(V^i)-ss}(L_{n,m}).\]
Here $\cC_\beta$ is the set of tuples $\underline{P}=(P_1, \dots ,P_s)$ 
such that $\sum_{i=1}^s P_i = P$, the conditions (\ref{num cond}) hold and 
$Z_{\underline{P}}$ is non-empty. For $\underline{P}=(P_1, \dots ,P_s) \in \cC_\beta$, we note that
\begin{equation}\label{cond on tuple} 
\frac{P_1(n)}{P_1(m)} > \cdots \cdots > \frac{P_s(n)}{P_s(m)}. 
\end{equation}
Then $S_\beta$ is a disjoint union of the schemes 
$S_{\underline{P}}:= \SL(V) p_\lambda^{-1}(Z_{\underline{P}})$ over 
$\underline{P} \in \cC_\beta$ and we obtain a refinement of the 
Hesselink stratification:
\[ \quot(V_n \otimes \cO_X(-n),P)= \bigsqcup_{\underline{P}} S_{\underline{P}}.\] 
\end{rmk}

\subsection{HN filtrations for coherent sheaves}

In this section, we describe a canonical destabilising filtration for each coherent 
sheaf, known as its Harder--Narasimhan (HN) filtration \cite{hn}. 

\begin{defn}\label{def HN usual} 
 A pure HN filtration of a sheaf $\cF$ is a filtration by subsheaves
\[ 0 = \cF^{(0)} \subset  \cF^{(1)} \subset \cdots \subset \cF^{(s)} = \cF \]
such that $\cF_i := \cF^{(i)}/\cF^{(i-1)}$ are semistable and 
$ P^{\red}(\cF_1) > P^{\red}(\cF_2) > \cdots > P^{\red}(\cF_s)$.
\end{defn}

By \cite{huybrechts} Theorem 1.3.4, every pure sheaf has a unique pure HN filtration. 
To define HN filtrations for non-pure sheaves, we need an alternative 
definition of semistability that does not use reduced Hilbert polynomials, 
as every non-pure sheaf $\cE$ has a non-zero subsheaf whose Hilbert polynomial 
has degree strictly less than that of $\cE$. For this, we use an extended 
notion of semistability due to Rudakov \cite{rudakov}. 
We define a partial ordering $\preccurlyeq$ on $\QQ[x]$ by
\[ P \preccurlyeq Q  \iff \frac{P(n)}{P(m)} \leq \frac{Q(n)}{Q(m)} 
\quad \quad \text{for \:} \: m >\! > n >\! > 0\]
and similarly define a strict partial order $\prec$ by replacing $\leq$ with $<$. 
This ordering allows us to compare polynomials with positive leading coefficient of 
different degrees in a way that polynomials of lower degree are bigger with 
respect to this ordering.  

\begin{rmk} Rudakov formulated this preordering using the coefficients of the polynomials: 
for rational polynomials $P(x) = p_d x^d + \cdots + p_0$ and 
$Q(x) = q_e x^e + \cdots +q_0$, let
\[ \Lambda(P,Q):=(\lambda_{f,f-1}, \dots , \lambda_{f,0}, \lambda_{f-1,f-2}, \dots , 
\lambda_{f-1,0}, \dots , \lambda_{1,0}) \quad \text{where} \quad \lambda_{i,j}:=p_iq_j - q_ip_j\]
and $f := \max(d,e)$. 
We write $\Lambda(P,Q)>0$ if the first non-zero $\lambda_{i,j}$ appearing in 
$\Lambda(P,Q)$ is positive. Then $ P \preccurlyeq Q$ is equivalent to $ \Lambda(P,Q) \geq 0 $. 
\end{rmk}

\begin{defn}\label{def rud ss}
 A sheaf $\cF$ is semistable if $P(\cE) \preccurlyeq P(\cF)$ for all 
 non-zero subsheaves $\cE \subset \cF$.
\end{defn}

This definition of semistability implies purity, as polynomials 
of smaller degree are bigger with respect to $\preccurlyeq$. Moreover, for Hilbert 
polynomials $P(\cE)$ and $P(\cF)$ of the same degree, we have $P(\cE) \preccurlyeq P(\cF)$ 
if and only if $P^{\red}(\cE) \leq P^{\red}(\cF)$. Thus, a sheaf is semistable in the 
sense of Definition \ref{def rud ss} if and only if it is semistable in the sense of 
Definition \ref{defn ss}. 

\begin{defn}\label{def HN gen} 
 A HN filtration of a sheaf $\cE$ is a filtration by subsheaves
\[ 0 = \cE^{(0)} \subset  \cE^{(1)} \subset \cdots \subset \cE^{(s)} = \cE \]
such that $\cE_i := \cE^{(i)}/\cE^{(i-1)}$ are semistable and 
$ {P(\cE_1)} \succ {P(\cE_2)} \succ \cdots \succ {P(\cE_s)}$ . 
\end{defn}

Using this definition of HN filtration, every coherent sheaf has a 
unique HN filtration (cf. \cite{rudakov}, Corollary 28). 
We can construct the HN filtration 
of a sheaf from its torsion filtration and the pure HN filtrations 
of the subquotients in the torsion filtration as follows.

\begin{prop}\label{prop HN gen}
Let $0 \subset T^{(0)}(\cE) \subset  \cdots \subset T^{(d)}(\cE) = \cE$ be the torsion 
filtration of $\cE$ and let
\begin{equation}\label{HN tor succ}
  0 = \cF^{(0)}_i \subset  \cF^{(1)}_i \subset \cdots \subset \cF^{(s_i)}_i = 
  T_i(\cE) :=T^{(i)}(\cE)/T^{(i-1)}(\cE) 
\end{equation}
be the pure HN filtrations of the subquotients in the torsion filtration. 
 Then $\cE$ has HN filtration
\[0 = \cE_0^{(0)} \subset \cE_0^{(1)} \subset \cE_1^{(1)} \subset \cdots \subset \cE_1^{(s_1)} \subset \cdots \subset \cE_d^{(1)} \subset \cdots \subset \cE_d^{(s_d)} =\cE \]
where $\cE^{(j)}_i$ is the preimage of $\cF^{(j)}_i$ under the quotient map $T^{(i)}(\cE) \ra T_i(\cE)$.
\end{prop}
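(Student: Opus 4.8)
The plan is to verify directly that the filtration
\[ 0 = \cE_0^{(0)} \subset \cE_0^{(1)} \subset \cdots \subset \cE_d^{(s_d)} = \cE \]
satisfies the two defining properties of a HN filtration from Definition \ref{def HN gen}: that its successive quotients are semistable, and that their Hilbert polynomials are \emph{strictly} decreasing with respect to the order $\succ$. Uniqueness of the HN filtration (cf. \cite{rudakov}, Corollary 28) then guarantees this is \emph{the} HN filtration, so no separate existence argument is needed. First I would identify the successive quotients of the proposed filtration. By the standard isomorphism theorem, consecutive terms within a single torsion level $i$ satisfy $\cE_i^{(j)}/\cE_i^{(j-1)} \cong \cF_i^{(j)}/\cF_i^{(j-1)}$, since $\cE_i^{(j)}$ is the preimage of $\cF_i^{(j)}$ under $T^{(i)}(\cE) \ra T_i(\cE)$ and the kernel $T^{(i-1)}(\cE)$ is common to both. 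These quotients are semistable by hypothesis, as \eqref{HN tor succ} is the \emph{pure} HN filtration of $T_i(\cE)$. At the junction between torsion levels, the quotient passing from $\cE_{i}^{(s_i)} = T^{(i)}(\cE)$ to $\cE_{i+1}^{(1)}$ is $\cF_{i+1}^{(1)}$, the first (hence maximal-slope, semistable) piece of the pure HN filtration of $T_{i+1}(\cE)$; so every successive quotient is one of the semistable pieces $\cF_i^{(j)}/\cF_i^{(j-1)}$, and the first property holds.

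The second, and main, task is the strict inequality $P(\cE_{k}) \succ P(\cE_{k+1})$ across \emph{all} consecutive quotients, and here the structure of $\succ$ does the work. Recall from the discussion after Definition \ref{def rud ss} that polynomials of \emph{smaller degree are larger} with respect to $\prec$. The quotient $T_i(\cE)$ is pure of dimension $i$, so every nonzero subquotient of its pure HN filtration also has dimension $i$, i.e. Hilbert polynomial of degree exactly $i$. Within a fixed torsion level $i$ the inequalities $P^{\red}(\cF_i^{(1)}/\cF_i^{(0)}) > \cdots > P^{\red}(\cF_i^{(s_i)}/\cF_i^{(s_i-1)})$ coming from the pure HN filtration are equivalent, for polynomials of equal degree $i$, to strict inequalities for $\succ$ (again by the compatibility noted after Definition \ref{def rud ss}). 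Between levels the argument is even cleaner: a quotient arising at level $i$ has degree $i$, a quotient arising at level $i+1$ has degree $i+1$, and a polynomial of degree $i$ is strictly larger than one of degree $i+1$ with respect to $\succ$ purely for degree reasons. Thus the chain of inequalities is strictly decreasing throughout, regardless of the numerical values of leading coefficients at different levels.

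The step I expect to require the most care is confirming that the $\cE_i^{(j)}$ really are subsheaves of $\cE$ forming a genuine increasing filtration, and that the preimage construction interacts correctly with the torsion filtration at the level junctions. Concretely, one must check that $\cE_i^{(s_i)} = T^{(i)}(\cE)$ (the full preimage of $T_i(\cE)$), so that the last term at level $i$ coincides with the first honest torsion step and the filtration glues without gaps or repeats. This is a routine diagram chase using the defining exact sequences $0 \to T^{(i-1)}(\cE) \to T^{(i)}(\cE) \to T_i(\cE) \to 0$, but it is where the bookkeeping must be done correctly. Once the filtration is verified to be well-formed, the semistability of the quotients and the strict $\succ$-decrease established above complete the proof, and appealing to uniqueness identifies it as the HN filtration of $\cE$.
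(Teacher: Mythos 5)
Your proposal is correct and follows essentially the same route as the paper's proof: appeal to uniqueness of the HN filtration, identify the successive quotients as $\cF_i^{(j)}/\cF_i^{(j-1)}$ via the isomorphism theorem, get semistability and the within-level strict inequalities from the pure HN filtrations (using the equivalence of $P^{\red}$-comparison and $\preccurlyeq$ for polynomials of equal degree), and get the between-level inequalities from the fact that lower-degree polynomials are larger for $\preccurlyeq$. The gluing check you flag ($\cE_i^{(s_i)} = T^{(i)}(\cE) = \cE_{i+1}^{(0)}$) is exactly the observation the paper makes at the start of its proof.
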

\begin{proof}
 As the HN filtration is unique, it suffices to check that the subquotients are 
 semistable with decreasing Hilbert polynomials for $\prec$. 
 First, we note that 
 $\cE^{(0)}_i = T^{(i-1)}(\cE)= \cE^{(s_{i-1})}_{i-1}$ and 
\[ \cE_i^j:=\cE_i^{(j)}/\cE_i^{(j-1)} \cong \cF_i^{(j)}/\cF_i^{(j-1)}=:\cF_i^{j}.\]
Since (\ref{HN tor succ}) is the pure HN filtration of $T_i(\cE)$, we have inequalities  
$P^{\red}(\cF_i^1)> \cdots > P^{\red}(\cF_i^{s_i})$ and the subquotients $\cF_i^{j}$ 
are semistable. Moreover, it follows that 
\[  P(\cE_0^1) \succ P(\cE_1^1) \succ \cdots \succ P(\cE_1^{s_1}) \succ \cdots  \succ P(\cE_d^1) \succ \cdots \succ P(\cE_d^{s_d}), \]
as $\deg P(\cF_i^j) =i$ and polynomials of lower degree are bigger with respect to this ordering.
\end{proof}

\begin{defn}
Let $\cE$ be a sheaf with HN filtration 
$ 0 = \cE^{(0)} \subset  \cE^{(1)} \subset \cdots \subset \cE^{(s)} = \cE $; 
then the HN type of $\cE$ is the tuple $\tau(\cE):=(P(\cE_{1}),\dots ,P(\cE_{s}))$ 
where $\cE_i := \cE^{(i)}/\cE^{(i-1)}$. 
We say $\tau = (P_1,\dots, P_s)$ is a pure HN type if all polynomials $P_i$ have the same degree.
\end{defn}

\subsection{The HN stratification on the stack of coherent sheaves}

Let $\cC oh_{P}(X)$ denote the stack of sheaves on $X$ with Hilbert polynomial $P$; this is 
an Artin stack such that
 \begin{equation}\label{coh as union} \cC oh_{P}(X) \cong \bigcup_{n} [Q_n^o/G_n]
 \end{equation}
where $G_n = \GL(V_n)$ and $Q_n^o$ is the open subscheme of $\quot(V_n \otimes \cO_X(-n),P)$ 
 consisting of quotient sheaves $q : V_n \otimes \cO_X(-n) \onto \cE$ such that $H^0(q(n))$ 
 is an isomorphism and $H^i(\cE(n)) = 0 $ for $i > 0$ (cf. \cite{LMB} 
 Theorem 4.6.2.1). 
 
Let
\[ \cH_{P}:= \left\{ \tau= (P_1,\dots, P_s) : \: P=\sum_{i=1}^s P_i   \: \text{and}
 \:  {P_1} \succ {P_2} \succ \cdots \succ {P_s} \right\}\]
 be the set of HN types of sheaves with Hilbert polynomial $P$. 
For a HN type $\tau= (P_1,\dots, P_s)$,  we define the $(n,m)$th Shatz polytope $\Gamma(\tau,n,m)$ 
to be the union of the
line segments joining $x_k: = (\sum_{j=1}^k P_j(m), \sum_{j=1}^k P_j(n))$ to $x_{k+1}$ 
for $ k =0, \dots , s-1$.  We define a partial order $\leq$ on the set of HN types $\cH_P$ by $\tau \leq \tau'$ if $\Gamma(\tau,n,m) $ 
lies above $\Gamma(\tau',n,m) $ for $m >\! > n >\!> 0$. 

\begin{thm}[Shatz \cite{shatz}; see also \cite{nitsureshn} Theorem 5]
Let $\cF$ be a family of sheaves on $X$ with Hilbert polynomial $P$ 
parametrised by a scheme $S$; then the HN type function $ S \ra \cH_P$ 
given by $s \mapsto \tau(\cF_s)$ is upper semi-continuous. Therefore,
\begin{enumerate}
\item $S_{> \tau}= \{ s \in S : \tau(\cF_s) > \tau \}$ is closed in $S$,
\item $S_\tau= \{ s \in S : \tau(\cF_s)= \tau \}$ is 
locally closed in $S$,
\end{enumerate}
and, moreover, there is a finite Shatz stratification of $S$ into disjoint 
subschemes $S_\tau$ such that 
\[\overline{S_\tau} \subset \bigsqcup_{\tau' \geq \tau} S_{\tau'}.\]
\end{thm}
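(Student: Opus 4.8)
The plan is to reduce everything to a single upper semicontinuity statement: for each HN type $\tau = (P_1,\dots,P_s) \in \cH_P$ the locus
\[ S_{\geq \tau} := \{ s \in S : \tau(\cF_s) \geq \tau \}, \]
i.e.\ the set of $s$ for which the HN polygon of $\cF_s$ lies on or above the Shatz polytope $\Gamma(\tau,n,m)$ (for $m \gg n \gg 0$), is closed in $S$. Granting this, the three assertions follow formally. Since only finitely many HN types dominate a fixed $\tau$ (by the boundedness input discussed below), $S_{>\tau} = \bigcup_{\tau' > \tau} S_{\geq \tau'}$ is a finite union of closed sets, hence closed, giving (1); then $S_\tau = S_{\geq \tau}\setminus S_{>\tau}$ is the intersection of a closed set with an open one, hence locally closed, giving (2); and $\overline{S_\tau} \subseteq S_{\geq \tau} = \bigsqcup_{\tau' \geq \tau} S_{\tau'}$ gives the closure relation. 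I would equip each $S_\tau$ with its natural (reduced-induced, or Quot-scheme flattening) subscheme structure, as in \cite{nitsureshn}; the essential content is the topological semicontinuity, the scheme structure being a routine refinement.

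First I would record the variational description of the polygon that makes $S_{\geq\tau}$ accessible. For a sheaf $\cE$ with Hilbert polynomial $P$ and $m \gg n \gg 0$ depending only on $P$, the Shatz polytope $\Gamma(\tau(\cE),n,m)$ is exactly the upper convex hull of the finite planar point set $\{(P(\cG,m),P(\cG,n)) : \cG \subseteq \cE\}$, its vertices being realised by the subsheaves of the HN filtration; this is the assertion that the HN filtration is maximally destabilising and follows from Definition \ref{def HN gen} by the induction on the maximal destabilising subsheaf used by Rudakov \cite{rudakov}. Since this hull is a concave piecewise-linear function sharing its endpoints with $\Gamma(\tau)$, and $\Gamma(\tau)$ is piecewise linear, the inequality $\Gamma(\tau(\cE)) \geq \Gamma(\tau)$ holds everywhere if and only if it holds at each vertex $x_k = (\sum_{j\le k}P_j(m),\sum_{j\le k}P_j(n))$ of $\Gamma(\tau)$. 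Thus $\tau(\cE) \geq \tau$ if and only if, for every $k$, the point $x_k$ lies on or below that hull, i.e.\ there exist subsheaves $\cG,\cG' \subseteq \cE$ whose points lie on either side of the line $x = \sum_{j \le k}P_j(m)$ and whose joining chord passes on or above $x_k$.

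This turns $S_{\geq\tau}$ into a condition on the existence of subsheaves of the fibres $\cF_s$ with prescribed Hilbert polynomials. For a fixed polynomial $Q$, the locus $\{ s : \cF_s \text{ has a subsheaf with Hilbert polynomial } Q\}$ is the image in $S$ of the relative Quot scheme $\quot(\cF/S, P-Q)$, which is projective over $S$ by Grothendieck; being the image of a proper morphism, it is closed. The requirement that the chord pass on or above $x_k$ is a purely numerical condition on the pair $(Q,Q')$ of Hilbert polynomials of $\cG,\cG'$, so for each vertex the condition ``$x_k$ lies below the hull'' is a union, over admissible pairs $(Q,Q')$, of intersections of two such closed existence loci. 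Intersecting over the finitely many vertices $k$ then exhibits $S_{\geq\tau}$ as a Boolean combination of closed sets, and hence as closed, \emph{provided} the union over $(Q,Q')$ is finite.

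The main obstacle, and the only non-formal ingredient, is precisely this finiteness: I must know that among the saturated subsheaves $\cG \subseteq \cF_s$ whose points can lie on the relevant part of the hull, only finitely many Hilbert polynomials occur. This is a boundedness statement — that the family of saturated subsheaves of sheaves with Hilbert polynomial $P$ whose reduced Hilbert polynomial (equivalently, whose position above the chord) is bounded is itself bounded — supplied by the Simpson--Le Potier estimates (cf.\ \cite{simpson} Theorem 1.1) already invoked in Section \ref{sec simp constr}, now applied in the unstable range rather than only to semistable sheaves. Boundedness simultaneously guarantees that only finitely many HN types occur, so that the stratification of $S$ is finite and the finite-union arguments of the first paragraph are legitimate. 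Once boundedness is in hand, the remaining steps are the formal manipulations of concave polygons and images of proper maps sketched above.
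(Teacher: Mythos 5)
The paper does not prove this statement: it is quoted as a known result of Shatz, with Nitsure's Theorem 5 cited for the schematic version. Your argument reconstructs essentially that standard proof --- closedness of $S_{\geq\tau}$ via the characterisation of the HN polygon as the upper convex hull of subsheaf points, properness of relative Quot schemes to make ``existence of a subsheaf with Hilbert polynomial $Q$'' a closed condition, and Grothendieck/Le Potier--Simpson boundedness to cut the union down to finitely many $Q$ and finitely many occurring HN types --- so it matches the approach of the cited references and is correct in outline. Two small points to tidy: the set $\{(P(\cG,m),P(\cG,n)) : \cG \subseteq \cE\}$ is not finite (subsheaves of arbitrarily negative degree exist), so you should say the upper convex hull is well defined and attained by the HN subsheaves rather than calling the point set finite; and since the partial order on $\cH_P$ is only defined for $m \gg n \gg 0$ depending on the types compared, you need the finiteness of the set of HN types occurring in the family (which your boundedness input supplies) \emph{before} fixing a single pair $(n,m)$ for which all the polygon comparisons are faithful.
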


The universal quotient sheaf $\cU_n$ over $X \times Q_n^o$ is a family of 
sheaves on $X$ with Hilbert polynomial $P$ parametrised by $Q_n^o$; therefore, we have 
an associated Shatz stratification
\begin{equation}\label{n shatz strat of quot} 
Q_n^o = \bigsqcup_\tau Q_{n,\tau}.
\end{equation}
As the Shatz strata $Q_{n,\tau}$ are 
$G_n$-invariant, this stratification descends to the stack quotient
\[ [Q_n^o/G_n] = \bigsqcup_\tau [Q_{n,\tau}/G_n].\]
From the description (\ref{coh as union}) of $\cC oh_P(X)$, 
we have the following corollary.

\begin{cor}\label{Shatz strat}
There is a HN stratification on the stack of coherent sheaves 
 \[\cC oh_{P}(X) = \bigsqcup_{\tau \in \cH_P} \cC oh_{P}^\tau(X) \]
into disjoint locally closed substacks $\cC oh_{P}^\tau(X)$ such that 
$ \overline{\cC oh_{P}^\tau(X)} \subset \bigsqcup_{\tau' \geq \tau} \cC oh_{P}^{\tau'}(X).$
\end{cor}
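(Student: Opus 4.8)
The plan is to define each stratum $\cC oh_P^\tau(X)$ as the substack of sheaves of Harder--Narasimhan type $\tau$, and to deduce its properties by transporting the Shatz stratifications (\ref{n shatz strat of quot}) along the increasing open cover (\ref{coh as union}). Because the HN type of a sheaf is intrinsic and independent of any presentation $q : V_n \otimes \cO_X(-n) \onto \cE$ by the uniqueness of the HN filtration (Definition \ref{def HN gen}), the Shatz stratum $Q_{n,\tau} \subseteq Q_n^o$ is precisely the $G_n$-invariant locus of quotients whose underlying sheaf has HN type $\tau$, and I would set
\[ \cC oh_P^\tau(X) := \bigcup_n [ Q_{n,\tau}/G_n ]. \]
Disjointness of the strata is then immediate, since each sheaf has a single HN type.

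First I would verify that this is a locally closed substack. Each $Q_{n,\tau}$ is a $G_n$-invariant locally closed subscheme of $Q_n^o$, so its quotient $[ Q_{n,\tau}/G_n ]$ is a locally closed substack of the open substack $\cC oh_P^{n-\reg}(X) \cong [Q_n^o/G_n]$. The essential compatibility is that on overlaps these pieces are restrictions of one another, i.e.
\[ \cC oh_P^\tau(X) \cap \cC oh_P^{n-\reg}(X) = [ Q_{n,\tau}/G_n ], \]
which holds because $Q_{n,\tau}$ is the preimage of $\cC oh_P^\tau(X)$ under the atlas $Q_n^o \ra [Q_n^o/G_n]$; again this rests only on the HN type being an invariant of the sheaf, not of the chosen $n$. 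Being locally closed on every member of an open cover, $\cC oh_P^\tau(X)$ is locally closed in $\cC oh_P(X)$.

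It remains to prove the closure relation. I would use the elementary fact that, for an open substack $U$ of $\cC oh_P(X)$ and any substack $Z$, one has $\overline{Z} \cap U = \overline{Z \cap U}^{U}$, the closure being taken inside $U$. Applying this with $U = \cC oh_P^{n-\reg}(X)$ and $Z = \cC oh_P^\tau(X)$, together with the Shatz closure relation $\overline{Q_{n,\tau}} \subseteq \bigsqcup_{\tau' \geq \tau} Q_{n,\tau'}$ (which descends to $[Q_n^o/G_n]$), gives
\[ \overline{\cC oh_P^\tau(X)} \cap \cC oh_P^{n-\reg}(X) \subseteq \bigsqcup_{\tau' \geq \tau} \cC oh_P^{\tau'}(X). \]
Since every sheaf is $n$-regular for $n$ sufficiently large, the substacks $\cC oh_P^{n-\reg}(X)$ cover $\cC oh_P(X)$; as the displayed inclusion holds for every $n$, taking the union over the cover yields $\overline{\cC oh_P^\tau(X)} \subseteq \bigsqcup_{\tau' \geq \tau} \cC oh_P^{\tau'}(X)$, as required.

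The conceptual content is light; the main point requiring care is bookkeeping with the partial order and with closures across the cover. Specifically, one must check that the order $\leq$ on $\cH_P$ appearing in the corollary --- defined via the asymptotic Shatz polytopes $\Gamma(\tau,n,m)$ for $m \gg n \gg 0$ --- is exactly the order controlling the Shatz closure relation on each fixed $Q_n^o$, so that the inclusions over different $n$ are genuinely compatible, and that passing from closures computed in the open substacks $\cC oh_P^{n-\reg}(X)$ to closures in the full stack introduces no spurious specialisations. Both are handled by the open-cover closure identity above; I would also note that, although $\cH_P$ may be infinite, each member of the cover meets only finitely many strata, so the resulting stratification is locally finite.
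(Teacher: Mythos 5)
Your proposal is correct and follows essentially the same route as the paper: apply Shatz's theorem to the universal quotient sheaf on each $Q_n^o$, note the strata are $G_n$-invariant so descend to the quotient stacks, and assemble over the open cover of $\cC oh_P(X)$ — you simply make explicit the overlap-compatibility, the locality of local closedness, and the closure identity $\overline{Z}\cap U=\overline{Z\cap U}^{\,U}$ that the paper leaves implicit. (The only cosmetic slip is identifying $\cC oh_P^{n-\reg}(X)$ with $[Q_n^o/G_n]$ rather than with the quotient of the slightly smaller open subscheme $Q^{n-\reg}\subseteq Q_n^o$; either open cover works.)
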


\begin{rmk}\label{mspace}
If $\tau = (P)$ is the trivial HN type, then $\cC oh^\tau_{X,P}=\cC oh^{ss}_{X,P}$ 
and, for $n >\!> 0$,  
 \[\cC oh^{ss}_{X,P} \cong [Q^{ss}_n / G_n]\]
where $Q^{ss}_n =Q_{n,(P)}$ is an open subscheme of 
$\quot(V_n \otimes \cO_X(-n),P)$. In fact, an analogous 
statement holds for all HN types (cf. Proposition \ref{HN strat quot pres}).
\end{rmk}

\subsection{The Hesselink and Shatz stratifications}

In this section, we prove that every Shatz stratum for a HN type $\tau$ is contained 
in a Hesselink stratum of $\quot(V_n \otimes \cO_X(-n),P)$ for $n >\!> 0$; 
this generalises a corresponding result for pure HN types given in \cite{hoskinskirwan}.

\begin{defn}\label{def hess index from HN type} 
For a HN type $\tau = (P_1, \dots , P_s) \in \cH_P$ and natural numbers $(n,m)$, 
we let $\beta_{n,m}(\tau)$ denote the conjugacy class of the rational 1-PS 
\[\begin{array}{rcccc} \lambda_{\beta_{n,m}(\tau)}(t) = \text{diag} &(\underbrace{t^{r_1}, \dots  ,
t^{r_1},} & \dots  & 
\underbrace{t^{r_s}, \dots  , t^{r_s}})& \quad   \text{where} \quad 
r_i:= \frac{P(m)}{P(n)} - \frac{P_i(m)}{P_i(n)} .  
\\ & P_1(n) & & P_s(n) & \end{array}\]
\end{defn}

Since $\tau$ is a HN type, we have $P_1 \succ P_2 \succ \cdots \cdots \succ P_s$ 
and thus, for $m > \!> n > \!> 0$,
\begin{equation}\label{ineq needed} 
\frac{P_1(n)}{P_1(m)}> \frac{P_2(n)}{P_2(m)}  > \cdots \cdots > \frac{P_s(n)}{P_s(m)}
\end{equation}
i.e., the weights $r_i$ are decreasing. 

\begin{thm}[see also \cite{hoskinskirwan}]\label{thm comp strat}
Let $\tau$ be a HN type; then, for $m > \!> n > \! > 0$, 
the Shatz stratum $Q_{n,\tau}$ is a closed subscheme of the Hesselink stratum $S_{\beta_{n,m}(\tau)}$  
in $\quot(V_n \otimes \cO_X(-n),P)$.
\end{thm}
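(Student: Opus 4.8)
The plan is to identify, for a quotient $q\colon V_n\otimes\cO_X(-n)\onto\cE$ in $Q_{n,\tau}$, the Harder--Narasimhan filtration of $\cE$ with a filtration of $V_n$ of the type appearing in Corollary~\ref{cor Hess quot}, and then to read off that $q$ lies in $S_{\beta_{n,m}(\tau)}$. Write the HN filtration of $\cE$ as $0=\cE^{(0)}\subset\cdots\subset\cE^{(s)}=\cE$ with semistable factors $\cE_i=\cE^{(i)}/\cE^{(i-1)}$ of Hilbert polynomial $P_i$. Since the sheaves with HN type $\tau$ form a bounded family, so do their HN factors, and hence for $n\gg0$ every $\cE_i$, and therefore every $\cE^{(i)}$, is $n$-regular. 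Using that $H^0(q(n))$ is an isomorphism, I would set $W^{(i)}:=H^0(q(n))^{-1}H^0(\cE^{(i)}(n))\subseteq V_n$; $n$-regularity gives $\dim W^{(i)}=\sum_{j\le i}P_j(n)$, the evaluation maps show $q(W^{(i)}\otimes\cO_X(-n))=\cE^{(i)}$, and the long exact cohomology sequences (with $H^1(\cE^{(i-1)}(n))=0$) identify the subquotient $W^i:=W^{(i)}/W^{(i-1)}$ with $H^0(\cE_i(n))$ and $q^i$ with the evaluation quotient of $\cE_i$.

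It then remains to verify the three conditions of Corollary~\ref{cor Hess quot} for $\beta=\beta_{n,m}(\tau)$, where $l_i=P_i(n)$ and $r_i=\tfrac{P(m)}{P(n)}-\tfrac{P_i(m)}{P_i(n)}$. Condition (1) is the dimension count above. Condition (2), namely $P(n)P(\cE^i,m)=l_i(P(m)-r_iP(n))$, reduces after substituting $l_i$ and $r_i$ to the identity $P(\cE^i,m)=P_i(m)$, which holds since $\cE^i=\cE_i$ has Hilbert polynomial exactly $P_i$; this is the only genuine computation. Condition (3) asks that $q^i$ be $\SL(W^i)$-semistable for $L_{n,m}$, and since $\cE_i$ is a (pure) semistable sheaf of Hilbert polynomial $P_i$ with $W^i\cong H^0(\cE_i(n))$, this is exactly Simpson's characterisation of GIT-semistability (Theorem~\ref{simthm}), valid for $m\gg n\gg0$; as $\tau$ has only finitely many factors, a single choice of $m\gg n\gg0$ serves for all $i$. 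This places $q$ in $S_{\beta_{n,m}(\tau)}$, and in fact in the refined stratum $S_{\underline{P}}$ of Remark~\ref{refine Hess} indexed by $\underline{P}=(P_1,\dots,P_s)=\tau$, since the factors have honest Hilbert polynomials $P_i$.

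For closedness I would exploit this refinement. By Remark~\ref{refine Hess} the tuple of Hilbert polynomials of the factors is locally constant on $S_\beta$, so $S_{\underline{P}}$ is a union of connected components of $S_\beta$ and is therefore closed in $S_\beta$; it thus suffices to show $Q_{n,\tau}$ is closed in $S_{\underline{P}}$. In fact I expect the equality $Q_{n,\tau}=S_{\underline{P}}$: a point of $S_{\underline{P}}$ comes, after acting by $\SL(V_n)$, with a filtration whose subquotient sheaves are semistable of Hilbert polynomials $P_1\succ\cdots\succ P_s$, and by uniqueness of the HN filtration (Definition~\ref{def HN gen}; cf.\ \cite{rudakov}) this filtration must be the HN filtration, so the underlying sheaf has HN type $\tau$, while the semistability of the factors together with Simpson's theorem forces the point into $Q_n^o$. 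The main obstacle is precisely that $\beta_{n,m}(\tau)$ need not determine $\tau$ when $\dim X>1$: the index records only the pairs $(P_i(n),P_i(m))$, which do not pin down the polynomials $P_i$, so two distinct HN types can share the same Hesselink stratum $S_\beta$. Passing to the refined stratum $S_{\underline{P}}$, which remembers the full Hilbert polynomials $P_i$, is exactly what makes the closedness (and the identification with $Q_{n,\tau}$) go through.
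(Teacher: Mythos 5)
The first half of your argument --- producing the filtration $W^{(i)}=H^0(q(n))^{-1}H^0(\cE^{(i)}(n))$ from the HN filtration, checking the numerical conditions of Corollary~\ref{cor Hess quot}, and invoking Simpson's theorem for the semistability of the factor quotients $q^i$ --- is correct and is essentially the paper's proof of the inclusion $Q_{n,\tau}\subseteq S_{\beta_{n,m}(\tau)}$ (indeed of $Q_{n,\tau}\subseteq S_{\underline{P}}$ with $\underline{P}=\tau$).

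The closedness argument, however, has a genuine gap: the claimed equality $Q_{n,\tau}=S_{\underline{P}}$ is false in general, and the paper explicitly warns against it (see $\S$\ref{sec sheaf strat compare}: ``even after such a refinement, the stratifications do not coincide''). A point of the refined stratum $S_{\underline{P}}$ retracts to a point of $Z_{\underline{P}}=\prod_i\quot(V^i\otimes\cO_X(-n),P_i)^{\SL(V^i)\text{-}ss}(L_{n,m})$, i.e.\ its factors $q^i:V^i\otimes\cO_X(-n)\onto\cE^i$ are merely GIT-semistable points of the \emph{full} Quot scheme. Simpson's theorem identifies GIT-semistability with sheaf-semistability only on the closed subscheme $R_n^i$ (the closure of $(Q_n^i)^{\pur}$); off $R_n^i$ the GIT-semistable locus of $\quot(V^i\otimes\cO_X(-n),P_i)$ is not described, and it can contain quotients for which $H^0(q^i(n))$ is not an isomorphism or for which $\cE^i$ is impure or unsemistable. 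So you cannot conclude that the filtration of a point of $S_{\underline{P}}$ is an HN filtration, and the reverse inclusion $S_{\underline{P}}\subseteq Q_{n,\tau}$ fails; this is also why Proposition~\ref{relation of strata} only places $\cS_\tau^n$ inside a \emph{union} of HN strata $\tau'\ge\tau$. The correct route to closedness (the one the paper takes) is to observe that $Z_{\tau,n,m}:=\prod_i(Q_n^i)^{ss}=\prod_i(R_n^i)^{\SL(V^i)\text{-}ss}(L_{n,m})$ is closed in $Z_{\underline{P}}$ precisely because $R_n^i$ is closed in $\quot(V^i\otimes\cO_X(-n),P_i)$ and semistable loci restrict to closed invariant subschemes; then $Q_{n,\tau}=\SL(V_n)\,p_\lambda^{-1}(Z_{\tau,n,m})$ is closed in $S_{\underline{P}}$, which (as you correctly note) is a union of connected components of $S_{\beta_{n,m}(\tau)}$, giving the theorem. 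You still need the identification $Q_{n,\tau}=\SL(V_n)\,p_\lambda^{-1}(Z_{\tau,n,m})$, whose nontrivial inclusion is exactly your first paragraph.
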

\begin{proof}
If $\tau = (P_1, \dots , P_s)$, we take $n > \! > 0$ so all semistable sheaves with Hilbert polynomial $P_i$ are $n$-regular for $i =1, \dots , s$. Then 
every sheaf with HN type $\tau$ is $n$-regular, as it admits a filtration whose 
successive quotients are $n$-regular, and so can be parametrised by a point in 
the Shatz stratum $Q_{n,\tau}$.

Let $V_n^i := k^{P_i(n)}$ and let 
$Q_{n}^i \subset \quot(V_n^i \otimes \cO_X(-n),P_i)$ be the open subscheme consisting of quotients 
$q : V_n^i \otimes \cO_X(-n) \onto \cE^i$ such that $H^0(q(n))$ is an isomorphism. Let 
$(Q_n^i)^{\pur}$ be the open subscheme of $Q_n^i$ parametrising pure sheaves and let 
$R_n^i$ be the closure of this subscheme in the Quot scheme. By \cite{simpson} Theorem 1.19, 
we can take $m >\!>n$ so that the GIT semistable set for $\SL(V_n^i)$ acting on $R_n^i$ 
with respect to the linearisation $L_{n,m}$ is the lowest Shatz stratum in $Q_n^i$ 
parametrising semistable sheaves; that is,
\[ (R_n^i)^{\SL(V_n^i)-ss}(L_{n,m}) = (Q_n^i)^{ss} := (Q_n^i)_{(P_i)}\]
for $i=1, \dots, s$. Furthermore, we assume $m > \!> n > \!> 0$, 
so that the inequalities (\ref{ineq needed}) hold. 

As described above, the index $\beta = \beta_{n,m}(\tau)$ determines a filtration  
 $0=V^{(0)}_n \subset  \cdots \subset V_n^{(s)}=V_n$ 
where $V_n^i := V_n^{(i)}/V_n^{(i-1)} = k^{P_i(n)}$. 
By construction, the conditions 
(\ref{num cond}) hold for $r(\beta)$ and $l(\beta)$; hence
\[ Z_{\tau,n,m}:= \prod_{i=1}^s (Q_n^i)^{ss}\subset
 \prod_{i=1}^s \quot(V^i_n \otimes \cO_X(-n),P_i)^{\SL(V^i_n)-ss}(L_{n,m}) \subset Z^\lambda_d.\]
Both inclusions are closed inclusions: the first, as $R_n^i$ is closed in 
$\quot(V^i_n \otimes \cO_X(-n),P_i)$. 
 Therefore, $Y_{\tau,n,m} := p_\lambda^{-1}(Z_{\tau,n,m}) \subset Y_d^\lambda$ and 
  $\SL(V_n)Y_{\tau,n,m} \subset S_{\beta}$ are both closed subschemes.
 
To complete the proof, we show that $Q_{n,\tau} = \SL(V_n)Y_{\tau,n,m}$. 
By construction, $ Y_{\tau,n,m}$ 
consists of quotient sheaves $q: V_n \otimes \cO_X(-n) \onto \cF$ with a filtration 
\[ 0 \subset \cF^{(1)} \subset \cdots \subset \cF^{(s)} = \cF \quad 
\text{ where } \quad \cF^{(i)}: = q(V^{(i)}_n \otimes \cO_X(-n))\] 
whose successive quotients $\cF^{(i)}/\cF^{(i-1)}$ are semistable with Hilbert 
polynomial $P_i$; that is, $\cF$ has HN type $\tau$ and thus 
$\SL(V_n^i) Y_{\tau,n,m} \subset Q_{n,\tau}$. 
Conversely, for $q : V_n \otimes \cO_X(-n) \onto \cF$ in $Q_{n,\tau}$ with HN filtration given by
\[0 = \cF^{(0)} \subset \cF^{(1)} \subset \cdots \subset \cF^{(s)}= \cF,\] 
we can choose $g \in \SL(V_n)$ that sends the 
filtration $V^{(i)}_n$ to $W_n^{(i)}:= H^0(q(n))^{-1} (H^0(\cF^{(i)}(n)))$; 
then $g \cdot q \in Y_{\tau,n,m}$ and this finishes the proof. 
\end{proof}

\begin{cor}\label{cor Shatz strata}
Let $\tau$ be a HN type for sheaves on $X$ with Hilbert polynomial $P$. 
For $m>\!> n >\!> 0$, all sheaves with HN type $\tau$ are 
$n$-regular and 
\[Q_{n,\tau}=\SL(V_n)Y_{\tau,n,m}\cong \SL(V_n) \times_{F_{n,\tau}} Y_{\tau,n,m}\]
where 
\begin{enumerate}
\item $F_{n,\tau}=P(\lambda)$ is a parabolic subgroup of $\SL(V_n)$,
\item $Y_{\tau,n,m} = p_\lambda^{-1}(Z_{\tau,n,m})$, and
\item $Z_{\tau,n,m}:= \prod_{i=1}^s (Q_n^i)^{ss}$,
\end{enumerate} 
where the subschemes $(Q_n^i)^{ss} \subset \quot(V_n^i \otimes \cO_X(-n),P_i)$ are as above.
\end{cor}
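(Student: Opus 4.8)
The plan is to read off almost the entire statement directly from the proof of Theorem \ref{thm comp strat}, and to supply only the associated-bundle structure, which follows from the general shape of a Hesselink stratum recorded in Theorem \ref{hess thm proj}. Writing $\lambda := \lambda_{\beta_{n,m}(\tau)}$ for the $1$-PS attached to $\tau$ and $\beta := \beta_{n,m}(\tau)$, the proof of Theorem \ref{thm comp strat} already establishes, for $m \gg n \gg 0$, that every sheaf with HN type $\tau$ is $n$-regular, that $Z_{\tau,n,m} = \prod_{i=1}^s (Q_n^i)^{ss}$ and $Y_{\tau,n,m} = p_\lambda^{-1}(Z_{\tau,n,m})$, and that $Q_{n,\tau} = \SL(V_n) Y_{\tau,n,m}$ as a closed subscheme of the ambient Hesselink stratum $S_\beta$. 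Thus items (2) and (3), together with the $n$-regularity claim, require nothing new, and only the isomorphism $\SL(V_n) Y_{\tau,n,m} \cong \SL(V_n) \times_{F_{n,\tau}} Y_{\tau,n,m}$ and the identification $F_{n,\tau} = P(\lambda)$ remain to be shown.

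For this I would invoke Theorem \ref{hess thm proj}(1), which gives $S_\beta = \SL(V_n) Y^\lambda_d \cong \SL(V_n) \times_{P(\lambda)} Y^\lambda_d$, where $P(\lambda)$ is the parabolic stabilising the filtration $0 = V_n^{(0)} \subset \cdots \subset V_n^{(s)} = V_n$ cut out by $\lambda$. Since $Y_{\tau,n,m}$ is a closed subscheme of $Y^\lambda_d$, it suffices to check that $Y_{\tau,n,m}$ is $P(\lambda)$-invariant: the balanced-product description then restricts to $\SL(V_n) Y_{\tau,n,m} \cong \SL(V_n) \times_{P(\lambda)} Y_{\tau,n,m}$, and we set $F_{n,\tau} := P(\lambda)$.

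To verify the $P(\lambda)$-invariance I would use that the retraction $p_\lambda$ is $P(\lambda)$-equivariant with respect to the quotient homomorphism $P(\lambda) \onto G_\lambda$ onto the Levi factor, so that $p_\lambda^{-1}$ of a $G_\lambda$-invariant subscheme is $P(\lambda)$-invariant. It then remains to see that $Z_{\tau,n,m} = \prod_{i=1}^s (Q_n^i)^{ss}$ is $G_\lambda$-invariant. Recalling that $G_\lambda \cong \{(g_1,\dots,g_s) \in \prod_{i=1}^s \GL(V_n^i) : \prod_{i=1}^s \det g_i = 1\}$ acts factorwise, this reduces to the observation that each $(Q_n^i)^{ss}$ is $\GL(V_n^i)$-invariant, which holds because both conditions defining it --- that $H^0(q(n))$ be an isomorphism for the quotient $q : V_n^i \otimes \cO_X(-n) \onto \cE$ and that $\cE$ be semistable --- are invariant under the $\GL(V_n^i)$-action. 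This gives the isomorphism.

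The main point to be careful about is the equivariance of $p_\lambda$ under the full parabolic $P(\lambda)$ rather than merely the Levi $G_\lambda$: the unipotent radical does not fix the fibres of $p_\lambda$, but it does preserve the filtration $V_n^{(\bullet)}$, and one must confirm that this is exactly what makes $p_\lambda^{-1}(Z_{\tau,n,m})$ parabolic-invariant. This is, however, precisely the mechanism already built into Theorem \ref{hess thm proj}(1), so no genuinely new difficulty arises and the corollary is indeed an immediate consequence of Theorem \ref{thm comp strat}.
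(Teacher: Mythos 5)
Your proposal is correct and follows exactly the route the paper intends: the corollary is stated without proof precisely because items (2), (3) and the $n$-regularity claim are read off from the proof of Theorem \ref{thm comp strat}, and the associated-bundle structure is the restriction of Theorem \ref{hess thm proj}(1) to the closed $P(\lambda)$-invariant subscheme $Y_{\tau,n,m} \subset Y_d^\lambda$. Your explicit verification of the $P(\lambda)$-invariance via the equivariance of $p_\lambda$ over the Levi quotient is the one detail the paper leaves implicit, and you have it right.
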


\subsection{The structure of the HN strata}

We now describe the HN strata $\cC oh^\tau_{P}(X)$.

\begin{prop}\label{HN strat quot pres} 
Let $\tau$ be a HN type. Then, for $n >\!> 0$, we have isomorphisms
 \[\cC oh^\tau_{P}(X) \cong [Q_{n,\tau} /G_n] \cong [Y_{\tau,n,m}/F_{n,\tau}]\]
where $Q_{n,\tau}$ and $Y_{\tau,n,m}$ are locally closed subschemes of 
$\quot( V_n \otimes \cO_X(-n),P)$ and $F_{n,\tau}$ is a parabolic subgroup of 
$G_n = \GL(V_n)$.
\end{prop}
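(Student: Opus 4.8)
The plan is to establish the two isomorphisms separately: I would read off the first from the construction of the HN stratification on the stack (Corollary \ref{Shatz strat}) together with the $n$-regularity statement of Corollary \ref{cor Shatz strata}, and obtain the second from the $P(\lambda)$-bundle description $Q_{n,\tau} \cong \SL(V_n) \times_{F_{n,\tau}} Y_{\tau,n,m}$ of Corollary \ref{cor Shatz strata} via the standard fact that inducing a quotient stack along a subgroup inclusion is an equivalence.

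For the first isomorphism, recall from (\ref{coh as union}) that $\cC oh_P(X) \cong \bigcup_n [Q_n^o/G_n]$ and that the HN stratum $\cC oh_P^\tau(X)$ is cut out, on each chart $[Q_n^o/G_n]$, by the $G_n$-invariant Shatz stratum $Q_{n,\tau}$ of (\ref{n shatz strat of quot}); thus $\cC oh_P^\tau(X) \cap [Q_n^o/G_n] \cong [Q_{n,\tau}/G_n]$ for every $n$. I would then invoke Corollary \ref{cor Shatz strata}: for $n >\!> 0$ every sheaf of HN type $\tau$ is $n$-regular, so the whole stratum $\cC oh_P^\tau(X)$ is already contained in the open substack $[Q_n^o/G_n]$. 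Hence the intersection is all of $\cC oh_P^\tau(X)$, giving $\cC oh_P^\tau(X) \cong [Q_{n,\tau}/G_n]$. The point to verify here is that these descriptions are compatible as $n$ grows, so that the locally closed substack carved out is independent of the chart; this holds because the HN type of the universal sheaf is intrinsic and the Shatz stratifications of the $Q_n^o$ are restrictions of a single stratification of the stack.

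For the second isomorphism, I would use the general principle that if a reductive group $G$ acts on a scheme $U$ and $H \subseteq G$ is a subgroup with an $H$-invariant locally closed $Y \subseteq U$ such that the action map induces an isomorphism $G \times_H Y \xrightarrow{\sim} U$, then $[U/G] \cong [Y/H]$. Corollary \ref{cor Shatz strata} supplies exactly this datum with $U = Q_{n,\tau}$, $Y = Y_{\tau,n,m} = p_\lambda^{-1}(Z_{\tau,n,m})$ and $H = F_{n,\tau} = P(\lambda)$ the parabolic associated to $\lambda$. The one bookkeeping point is the passage between $\SL(V_n)$ and $G_n = \GL(V_n)$: since the diagonal $\GG_m \subset G_n$ acts trivially on the Quot scheme, one has $G_n \cdot Y_{\tau,n,m} = \SL(V_n)\cdot Y_{\tau,n,m} = Q_{n,\tau}$ and $Q_{n,\tau} \cong G_n \times_{F_{n,\tau}} Y_{\tau,n,m}$ with $F_{n,\tau}$ now taken to be the corresponding parabolic of $G_n$, namely the $\SL$-parabolic extended by the diagonal torus. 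Applying the principle over $G_n$ then yields $[Q_{n,\tau}/G_n] \cong [Y_{\tau,n,m}/F_{n,\tau}]$.

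The main obstacle I expect is not either isomorphism in isolation, since both reduce to facts already in hand, but rather the compatibility issue underlying the first isomorphism: one must be careful that the finite chart $[Q_{n,\tau}/G_n]$ genuinely recovers the intrinsically defined stratum and does not depend on the auxiliary choices of $n$ (and of $m$ entering $Y_{\tau,n,m}$). This is precisely where the $n$-regularity input of Corollary \ref{cor Shatz strata} does the real work, guaranteeing that no sheaf of type $\tau$ is lost upon truncating to $n$-regular sheaves.
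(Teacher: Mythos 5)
Your proposal is correct and follows essentially the same route as the paper: both isomorphisms rest on Corollary \ref{cor Shatz strata} ($n$-regularity of all sheaves of type $\tau$ for the first, the description $Q_{n,\tau}\cong \SL(V_n)\times_{F_{n,\tau}}Y_{\tau,n,m}$ for the second), with the paper merely phrasing the first isomorphism as a direct verification that the restricted universal family is a local universal family, hence an atlas, rather than reading it off the chartwise construction of the stratification. Your explicit handling of the passage from the $\SL(V_n)$-parabolic to the corresponding parabolic of $G_n$ is a detail the paper leaves implicit, and is welcome.
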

\begin{proof}
 By Corollary \ref{cor Shatz strata}, there exists $n >\!>0$, so all sheaves with HN type
 $\tau$ are $n$-regular and so can be parametrised by a quotient sheaf in the Shatz stratum 
 $Q_{n,\tau} \subset Q_n^o$.
 
The restriction $\cU_{n,\tau}$ of the universal family to $Q_{n,\tau}\times X$ 
has the local universal property for families of sheaves on $X$ of HN type $\tau$ 
by our assumption on $n$. Therefore, we obtain a map
\[Q_{n,\tau} \ra \cC oh^\tau_P(X) \]
that is an atlas for $\cC oh^\tau_P(X)$.  
Two morphisms $f_i : S \ra Q_{n,\tau}$ define isomorphic families 
of sheaves of HN type $\tau$ if and only if they are related by a 
morphism $\varphi : S \ra \GL(V_n)$; 
i.e. $f_1(s) = \varphi(s) \cdot f_2(s)$ for all $s \in S$. 
Hence, the above morphism descends to an isomorphism $[Q_{n,\tau}/G_n] \ra \cC oh^\tau_{P}(X)$. 
The final isomorphism follows from Corollary \ref{cor Shatz strata}.
\end{proof}

\subsection{Stratifications on the stack}\label{sec sheaf strat compare}

It is natural to expect an agreement between the Hesselink and Shatz stratifications, 
as we saw in the case of quiver representations. Further evidence that supports such an 
expectation comes from the agreement between the Yang--Mills 
and HN stratification in gauge theory (cf. \cite{atiyahbott,daskalopoulos}). 
However, we only have a containment 
result: the $\tau$-Shatz stratum is contained in a Hesselink 
stratum for $m$ and $n$ sufficiently large. In this section, we explain 
why these stratifications do not agree and what should be done 
to rectify this. First we note that multiple HN strata can be 
contained in a single Hesselink stratum; the proof follows from 
Definition \ref{def hess index from HN type}.

\begin{lemma}
Let $\tau =(P_1 , \dots , P_s)$ and $\tau' = (P_1', \dots , P_t')$ be 
HN types; then
\[ \beta_{n,m}(\tau) = \beta_{n,m}(\tau') \]
if and only if $s =t$ and, for $i =1, \dots, s$, we have that 
$P_i(n)=P_i'(n)$ and $P_i(m) = P_i'(m)$.
\end{lemma}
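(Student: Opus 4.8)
The plan is to read off the conjugacy class $\beta_{n,m}(\tau)$ directly from the multiset of weights, counted with multiplicity, of the defining rational $1$-PS $\lambda_{\beta_{n,m}(\tau)}$, and to observe that this multiset records exactly the numbers $P_i(n)$ (the multiplicities) and the weights $r_i$. Since the conjugacy class of a rational $1$-PS of $\SL(V_n)$ is determined by, and determines, the multiset of its weights, proving the lemma amounts to comparing these two pieces of numerical data for $\tau$ and $\tau'$.

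First I would record the one algebraic input. Both $\tau$ and $\tau'$ lie in $\cH_P$, so $\sum_i P_i = P = \sum_j P_j'$; in particular the constant $P(m)/P(n)$ appearing in Definition \ref{def hess index from HN type} is the \emph{same} for the two types. Hence for each block the weight $r_i = P(m)/P(n) - P_i(m)/P_i(n)$ is determined by the pair $(P_i(n), P_i(m))$, and conversely, given this common constant, the pair $(r_i, P_i(n))$ determines $P_i(m) = P_i(n)(P(m)/P(n) - r_i)$.

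Next I would use that, for $m \gg n \gg 0$, the inequalities (\ref{ineq needed}) hold, so the weights of $\lambda_{\beta_{n,m}(\tau)}$ are \emph{strictly} decreasing, $r_1 > r_2 > \cdots > r_s$, and likewise for $\tau'$; moreover each $P_i(n)$ is a positive integer for $n$ large. Consequently the multiset of weights breaks into $s$ distinct values, and listing them in decreasing order with their multiplicities recovers the ordered list $((r_1,P_1(n)),\dots,(r_s,P_s(n)))$ unambiguously. The reverse implication is then immediate: if $s=t$ and $P_i(n)=P_i'(n)$, $P_i(m)=P_i'(m)$ for all $i$, then $r_i=r_i'$, so the two $1$-PSs are literally equal and their conjugacy classes coincide. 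For the forward implication, equality of the conjugacy classes forces equality of the two ordered lists; matching their lengths gives $s=t$, matching multiplicities gives $P_i(n)=P_i'(n)$, and matching weights gives $r_i=r_i'$, whence $P_i(m)=P_i'(m)$ by the displayed formula for $P_i(m)$.

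There is no serious obstacle here: the content is the bookkeeping translation between a conjugacy class and its multiset of weights. The only points that require care are the step recovering $P_i(m)$, where the shared total Hilbert polynomial $P$ is used to cancel the common constant $P(m)/P(n)$, and the reliance on (\ref{ineq needed}) to guarantee that distinct blocks of the filtration give distinct weights, so that the multiplicities can be read off separately. Both are consequences of working in the regime $m \gg n \gg 0$ with $\tau, \tau' \in \cH_P$.
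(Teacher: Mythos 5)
Your proof is correct and is essentially the argument the paper intends: the paper offers no written proof beyond the remark that the lemma "follows from Definition \ref{def hess index from HN type}", and your argument is precisely the careful unwinding of that definition (a conjugacy class of a rational 1-PS of $\SL(V_n)$ is its weight multiset, the strictly decreasing weights from (\ref{ineq needed}) let you read off the ordered pairs $(r_i,P_i(n))$, and the common constant $P(m)/P(n)$ recovers $P_i(m)$). Your explicit flagging of the reliance on $m \gg n \gg 0$ for the strict decrease of the $r_i$ is a point the paper leaves implicit.
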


\begin{rmk}
We note that if $\dim X \leq 1$, then the assignment $\tau \mapsto \beta_{n,m}(\tau)$ 
is injective for any $m > n$, as the Hilbert polynomial $P$ of any sheaf 
on $X$ has at most degree 1 and so $P$ is uniquely determined by the pair 
$(P(n),P(m))$. 
\end{rmk}

For distinct HN types $\tau \neq \tau'$, we note that 
\[ \beta_{n,m}(\tau) \neq \beta_{n,m}(\tau') \quad \text{for } \: m >\! > n >\!> 0.\]
However, as there are infinite many HN types, we cannot 
pick $m >\! > n >\!> 0$ so that the assignment 
$\tau \mapsto \beta_{n,m}(\tau)$ is injective for all HN types. We can overcome this 
problem by refining the Hesselink strata as suggested in Remark \ref{refine Hess}; 
however, even after such a refinement, the stratifications do not coincide 
(cf. Proposition \ref{relation of strata} and the later comments).

The deeper reason underlying the failure of these stratifications to coincide  
is that the Quot scheme is only a truncated parameter space for sheaves with Hilbert 
polynomial $P$, in the sense that it does not parametrise all sheaves on $X$ 
with Hilbert polynomial $P$. However, the family of all sheaves on $X$ with Hilbert 
polynomial $P$ is unbounded and so there is no scheme that parametrises all 
such sheaves. In fact, we really need to consider the quot schemes $\quot(V_n \otimes \cO_X(-n),P)$ for all $n$ simultaneously.

In the case of vector bundles over a smooth projective curve, Bifet, Ghione 
and Letizia \cite{bgl} constructed an ind-variety of matrix divisors and used a
HN stratification on this ind-variety to derive inductive formulae for the 
Betti numbers. Therefore, one may hope to analogously construct an ind-quot scheme. 
Unfortunately, in our setting, there are no natural maps between these quot 
schemes which allow us to construct such an ind-quot scheme.

The correct space to compare the Hesselink and HN stratifications 
is on the stack $\cC oh_P(X)$. By Corollary \ref{Shatz strat}, 
there is an infinite HN stratification  
 \[\cC oh_{P}(X) = \bigsqcup_{\tau} \cC oh_{P}^\tau(X). \]
We have an increasing open cover of $\cC oh_P(X)$ by the 
substacks $\cC oh_{P}^{n-\reg}(X)$ of $n$-regular sheaves: 
\[ \cC oh_P(X) = \bigcup_{n \geq 0} \cC oh_P^{n-\reg}(X)\]
such that $\cC oh^{n-\reg}_P(X) \cong [Q^{n-\reg}/G_n]$ 
where $Q^{n-\reg}$ is the open subscheme of $Q_n^o$ consisting of 
quotient sheaves $q: V_n \otimes \cO_X(-n) \onto \cE$ such that 
$\cE$ is $n$-regular. In the remainder of this section, by using the 
Hesselink stratification on each Quot scheme $\quot(V_n \otimes \cO_X(-n),P)$, 
we construct a Hesselink stratification on $\cC oh_P(X)$.

For each $n$, we fix $m_n > \!> n$ so the $\SL(V_n)$-linearisation $L_{n,m_n}$ 
on $\quot(V_n \otimes \cO_X(-n),P)$ is ample. Then we have an associated 
Hesselink stratification
\[ \quot(V_n \otimes \cO_X(-n),P) = \bigsqcup_{\beta \in \cB_n} S^n_\beta \]
which we refine, as in Remark \ref{refine Hess}, by tuples $\underline{P} = (P_1, \dots ,P_s)$ of Hilbert polynomials
\[ \quot(V_n \otimes \cO_X(-n),P) = \bigsqcup_{\underline{P} \in \cC_n} S^n_{\underline{P}}.\]
Let $S_{\underline{P}}^{n-\reg}$ be the fibre product of $Q^{n-\reg}$ and $S^n_{\underline{P}}$ in this 
Quot scheme. We have a decomposition
\[ Q^{n-\reg} = \bigsqcup_{\underline{P} \in \cC_n} S_{\underline{P}}^{n-\reg}\]
into finitely many locally closed subschemes. Since $Q^{n-\reg}$ is an open, rather than a closed, subscheme of 
the Quot scheme, the strata $S_{\underline{P}}^{n-\reg}$ do not admit descriptions    
as in Theorem \ref{hess thm proj}. However, as each stratum is $G_n$-invariant, we obtain 
an induced decomposition 
\[ \cC oh^{n-\reg}_P(X) = \bigsqcup_{\underline{P} \in \cC_n} {\cS}_{\underline{P}}^n 
\quad \quad \text{where } \: {\cS}_{\underline{P}}^n  \cong [S_{\underline{P}}^{n-\reg}/G_n]\]
into finitely many disjoint locally closed substacks. 

\begin{prop}\label{relation of strata}
Let $\tau \in \cH_P$ be a HN type. Then, for $n >\! > 0$ and for $n' > \! > n$, 
we have
\[ \cC oh^\tau_P(X) \subset \cS_{\tau}^n \subset 
\bigsqcup_{\tau' \in \cB_\tau^n} \cC oh^{\tau'}_P(X) \subset 
\bigsqcup_{\tau' \in \cB_\tau^n} \cS_{\tau'}^{n'}\]
where $\cB_\tau^n$ is a finite set of HN types $\tau' \geq \tau$.
\end{prop}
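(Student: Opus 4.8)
The plan is to prove the four inclusions of the chain separately. The first and last are essentially two instances of the containment result already established in Theorem \ref{thm comp strat}, while the genuinely new content sits in the middle inclusion, where one must identify which Harder--Narasimhan types occur in a single refined Hesselink stratum and show both that they are finite in number and that they all dominate $\tau$.

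For the first inclusion $\cC oh^\tau_P(X) \subset \cS_\tau^n$, I would start from the identification $\cC oh^\tau_P(X) \cong [Q_{n,\tau}/G_n]$ of Proposition \ref{HN strat quot pres} and feed in the explicit description from the proof of Theorem \ref{thm comp strat} and Corollary \ref{cor Shatz strata}: there one has $Q_{n,\tau} = \SL(V_n)\,p_\lambda^{-1}(Z_{\tau,n,m})$ with $Z_{\tau,n,m} = \prod_{i=1}^s (Q_n^i)^{ss}$ a \emph{closed} subscheme of the limit set $Z_{\underline P}$ associated to $\underline P = \tau$ in the refinement of Remark \ref{refine Hess}. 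Hence $Q_{n,\tau}$ lands inside the refined Hesselink stratum $S_\tau^n = \SL(V_n)\,p_\lambda^{-1}(Z_{\underline P})$; since every sheaf of HN type $\tau$ is $n$-regular for $n \gg 0$, in fact $Q_{n,\tau} \subset S_\tau^{n-\reg}$, and passing to $G_n$-quotient stacks gives $\cC oh^\tau_P(X) \subset \cS_\tau^n$. The final inclusion is then nothing more than this first inclusion applied at the larger level $n'$ to each member of $\cB_\tau^n$: each $\tau'$ has a threshold $N(\tau')$ beyond which $\cC oh^{\tau'}_P(X) \subset \cS_{\tau'}^{n'}$, and since $\cB_\tau^n$ is finite one may take $n' \geq \max_{\tau' \in \cB_\tau^n} N(\tau')$ so that all of them hold simultaneously.

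The heart of the argument is the middle inclusion $\cS_\tau^n \subset \bigsqcup_{\tau' \in \cB_\tau^n} \cC oh^{\tau'}_P(X)$. By Corollary \ref{cor Hess quot} together with the refinement of Remark \ref{refine Hess}, a point of $\cS_\tau^n$ is an $n$-regular sheaf $\cE$ carrying a filtration whose successive quotients $\cE^i$ have Hilbert polynomial $P_i$ (with $P_1 \succ \cdots \succ P_s$, so that the slope conditions (\ref{num cond}) and (\ref{cond on tuple}) hold) and are $\SL(W^i)$-semistable with respect to $L_{n,m_n}$ at the fixed level $n$. The crucial subtlety --- and precisely the reason the two stratifications fail to coincide on $\quot_n$ --- is that level-$n$ GIT semistability of $\cE^i$ does not force $\cE^i$ to be a semistable sheaf. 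I would argue that the given filtration is a candidate refined by the HN filtration of $\cE$, so that its polygon $\Gamma(\tau,n,m_n)$ is dominated by the HN polygon of $\cE$ and hence $\tau' := \tau(\cE) \geq \tau$, with equality exactly on the locus where all $\cE^i$ are genuinely semistable, which is $\cC oh^\tau_P(X)$ by the first inclusion. Finiteness of $\cB_\tau^n$ then follows from boundedness: every sheaf in $\cS_\tau^n$ is $n$-regular with Hilbert polynomial $P$, hence is parametrised by the finite-type scheme $Q^{n-\reg}$, so only finitely many HN types occur.

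I expect the main obstacle to be this domination estimate, namely showing rigorously that $\tau(\cE) \geq \tau$ for every $\cE \in \cS_\tau^n$. The difficulty is that one may only invoke semistability of the $\cE^i$ with respect to $L_{n,m_n}$ at the \emph{fixed} level $n$, rather than for $m \gg n \gg 0$, so Simpson's equivalence between GIT semistability and sheaf semistability is not directly available for controlling the Hilbert polynomials of the genuine HN subquotients of each $\cE^i$ relative to $P_i$. The argument must instead combine the numerical constraints (\ref{num cond}) and (\ref{cond on tuple}) recorded in the Hesselink description with the upper semicontinuity of the HN type (Shatz's theorem, see \cite{shatz}) to pin down the finitely many dominating types $\tau' \geq \tau$ and thereby close the chain.
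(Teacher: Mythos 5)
Your proposal follows essentially the same route as the paper: the first and last inclusions come from Theorem \ref{thm comp strat} (the last applied to the finitely many types in $\cB_\tau^n$, with $n'$ taken large enough for all of them at once), and the middle inclusion from the observation that a point of $\cS_\tau^n$ carries a filtration with subquotient Hilbert polynomials $(P_1,\dots,P_s)$, forcing $\tau(\cE)\geq\tau$ by polygon domination, with finiteness of $\cB_\tau^n$ from the finite Shatz stratification of $S_\tau^{n-\reg}$. The ``main obstacle'' you flag is actually a non-issue: the paper never invokes GIT or sheaf semistability of the subquotients $\cE^i$ for the middle inclusion --- the mere existence of a filtration with the prescribed Hilbert polynomials already yields $\tau(\cE)\geq\tau$.
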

\begin{proof}
By Theorem \ref{thm comp strat}, for $n>\!> 0$, the 
Shatz stratum indexed by $\tau$ is contained in the Hesselink stratum indexed by $\beta_{n,m_n}(\tau)$. 
Moreover, $\cC oh^\tau_P(X)$ is contained in the refined Hesselink stratum $\cS^n_\tau$  
indexed by the tuple of polynomials $\tau=(P_1, \dots ,P_s)$. For the second inclusion, 
we note that every quotient sheaf $q : V_n \otimes \cO_X(-n) \onto \cE$ in the  
refined Hesselink stratum $S_\tau^n \subset \quot(V_n \otimes \cO_X(-n),P)$ has a filtration
\[ 0 \subset \cE^{(1)} \subset \cE^{(2)} \subset \cdots \subset \cE^{(s)} =\cE\]
such that $P(\cE^{(i)}/\cE^{(i-1)}) = P_i$; that is $\cE$ has HN type greater 
than or equal to $\tau$ and 
\[ \cS_{\tau}^n \subset \bigsqcup_{\tau' \geq \tau} \cC oh^{\tau'}_P(X). \]
As $S_\tau^{n-\reg}$ has a finite Shatz stratification by HN types, 
we can take a finite index set $\cB_\tau^n$ here. The final inclusion follows 
by applying Theorem \ref{thm comp strat} to the 
finite set of HN types $\cB_\tau^n$.
\end{proof}

It is not the case that for $n' >\!> n$, the $n'$th Hesselink 
stratification refines the $n$th Hesselink stratification. 
For example, we could have two sheaves $\cE$ and $\cE'$ which belong to distinct 
strata $\cS^n_{\underline{P}}$ and $\cS^n_{\underline{P}'}$, but both 
have the same HN type. Moreover, we could have  a stratum  
$\cS_{\underline{P}}^n$ indexed by a tuple of polynomials 
$\underline{P} = (P_1, \dots , P_s)$ which is not a HN type: i.e. we have 
\[ \frac{P_1(n)}{P_1(m_n)} > \cdots > \frac{P_s(n)}{P_s(m_n)} \]
without $P_1 \succ \cdots \succ P_s$; although, then $\underline{P}$ 
will not index a $n'$th Hesselink stratum for $n' >\!> n$.

\begin{lemma}
A tuple of polynomials $\underline{P}$ is a surviving Hesselink index 
(i.e. $\cS^n_{\underline{P}}$ is non-empty for $n >\!> 0$) 
if and only if it is a HN type of a sheaf on $X$ with Hilbert polynomial $P$.
\end{lemma}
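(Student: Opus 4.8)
The plan is to prove the two implications separately, in each case passing between the refined Hesselink strata of Remark~\ref{refine Hess} and genuine Harder--Narasimhan filtrations by means of Theorem~\ref{thm comp strat} and Simpson's GIT correspondence (Theorem~\ref{simthm}).

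For the implication that every HN type of a sheaf is a surviving index, I would begin with a sheaf $\cE$ on $X$ of Hilbert polynomial $P$ and HN type $\underline{P}=\tau=(P_1,\dots,P_s)$. By Corollary~\ref{cor Shatz strata}, for $n >\!> 0$ this sheaf is $n$-regular and so determines a point of the Shatz stratum $Q_{n,\tau}$, whence $\cC oh^\tau_P(X)$ is non-empty. Proposition~\ref{relation of strata} already records the inclusion $\cC oh^\tau_P(X)\subset\cS^n_\tau$ for $n >\!> 0$, so the refined Hesselink substack $\cS^n_{\underline{P}}=\cS^n_\tau$ is non-empty for all sufficiently large $n$; that is, $\underline{P}$ survives. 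Note that the delicate dependence of $m_n$ on $n$ is here absorbed into the proof of Proposition~\ref{relation of strata}.

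For the converse, suppose $\cS^n_{\underline{P}}$ is non-empty for all $n\geq N$. Two things must be extracted: the Rudakov ordering $P_1\succ\cdots\succ P_s$ and a sheaf realising $\underline{P}$. For the ordering, non-emptiness forces $\underline{P}\in\cC_n$ and hence the numerical inequalities~(\ref{cond on tuple}) at each pair $(n,m_n)$ with $m_n >\!> n$. Since the Rudakov order $\prec$ is total on polynomials with positive leading coefficient and is characterised by the sign of $\tfrac{Q(n)}{Q(m)}-\tfrac{Q'(n)}{Q'(m)}$ for $m >\!> n >\!> 0$, the persistence of these strict inequalities as $n\to\infty$ rules out $P_i\preccurlyeq P_{i+1}$ and therefore yields $P_i\succ P_{i+1}$; this is precisely the mechanism anticipated in the paragraph preceding the statement, whereby a tuple satisfying only the numerical ordering ceases to index a stratum once $n$ grows. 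For realisability, I would use the non-empty stratum itself: by Corollary~\ref{cor Hess quot} a point of $S^n_{\underline{P}}$ is an $n$-regular sheaf $\cE$ equipped with a filtration whose subquotients $\cE^i$ have Hilbert polynomial $P_i$ and are $\SL(W^i)$-GIT-semistable with respect to $L_{n,m_n}$; for $m_n$ large relative to $n$ these are genuinely semistable sheaves by Theorem~\ref{simthm}. Combined with $P_1\succ\cdots\succ P_s$ and the uniqueness of HN filtrations (Definition~\ref{def HN gen}), this filtration is the HN filtration of $\cE$, so $\cE$ has Hilbert polynomial $P$ and HN type $\underline{P}$.

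The main obstacle I anticipate is bookkeeping the dependence of $m_n$ on $n$. The refined stratification, and hence the very notion of a surviving index, is defined with $m_n$ chosen only so that $L_{n,m_n}$ is ample, whereas the identification of $\SL(W^i)$-GIT-semistability with semistability of the subquotients $\cE^i$ requires $m_n$ to exceed the Simpson--Le Potier bound for the sub-Hilbert-polynomials $P_i$. Because the lemma concerns a single fixed tuple $\underline{P}$ with finitely many components $P_1,\dots,P_s$, this should be resolvable by taking $n$ (and correspondingly $m_n$) large enough that Theorem~\ref{simthm} applies simultaneously to $P$ and to each $P_i$; making this uniformity precise, and checking its compatibility with the fixed choice of $m_n$ defining the strata, is the step demanding the most care.
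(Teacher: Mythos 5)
The paper states this lemma without proof, so your attempt has to stand on its own. The forward implication is fine: it is just the first inclusion of Proposition~\ref{relation of strata} applied to the non-empty stack $\cC oh^{\tau}_P(X)$, and your derivation of the ordering $P_1 \succ \cdots \succ P_s$ from the persistence of the inequalities (\ref{cond on tuple}) as $n \to \infty$ is the mechanism the paper clearly intends (the quantifier mismatch between the fixed sequence $m_n$ and the thresholds implicit in $\prec$ is real, but you flag it, and it is no worse than the ambiguity already present in the paper's definition of the stratification).

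The genuine gap is in the realizability half of the converse. Corollary~\ref{cor Hess quot} only tells you that the subquotients $q^i : W^i \otimes \cO_X(-n) \onto \cE^i$ of a point of $S^n_{\underline{P}}$ are GIT-semistable points of the \emph{full} Quot scheme $\quot(W^i \otimes \cO_X(-n), P_i)$, whereas Theorem~\ref{simthm} (Simpson's Theorem 1.19) identifies GIT-semistability with sheaf-semistability only on the closed subscheme $R^i_n$; this is exactly why the proof of Theorem~\ref{thm comp strat} records $\prod_i (Q^i_n)^{ss} \subset \prod_i \quot(V^i_n \otimes \cO_X(-n),P_i)^{\SL(V^i_n)-ss}$ as a (possibly proper) closed inclusion rather than an equality. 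A GIT-semistable point of the full Quot scheme need not have $H^0(q^i(n))$ an isomorphism and need not present a semistable, or even pure, sheaf, so you cannot conclude that the filtration you exhibit is the HN filtration of $\cE$. What you can extract is only $\tau(\cE) \geq \underline{P}$ in the Shatz order --- which is precisely the statement Proposition~\ref{relation of strata} makes and deliberately does not strengthen; compare also the remark after the proposition on images of Hesselink strata under $\Phi_{n,m}$, where the author explicitly notes being unable to deduce semistability of associated objects from GIT-semistability of the $q_i$. To close the argument you need an independent route from a non-empty limit set $Z_{\underline{P}}$ (for all $n \gg 0$) to an actual semistable sheaf with Hilbert polynomial $P_i$ --- for instance, once the ordering is established, a direct sum $\bigoplus_i \cE_i$ of semistable sheaves with $P(\cE_i) = P_i$ would have HN type $\underline{P}$ --- but producing such $\cE_i$ is exactly the step your citation of Theorem~\ref{simthm} does not supply.
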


We want to define an infinite Hesselink stratification on 
$\cC oh_{{P}}(X)$ such that the infinite strata $\cH ess_\tau$ 
are indexed by the surviving Hesselink 
indices $\tau$. Furthermore, we want this stratification to be a limit over $n$ of the 
finite Hesselink stratifications on the stack of $n$-regular sheaves; more precisely, 
we want a sheaf $\cF$ to be represented by a point of $\cH ess_\tau$ if and only 
if $\cF$ is represented by a point of $\cS^n_\tau$ for $n > \! > 0$.

From this intuitive picture concerning the points we would like the 
infinite Hesselink strata to parametrise, 
it is not clear whether these infinite Hesselink strata 
$\cH ess_\tau$ are locally closed substacks of $\cC oh_{{P}}(X)$. 
To give a more concrete definition, we define the infinite Hesselink 
strata as an inductive limit of stacks. This is slightly delicate as 
there is no reason for the finite Hesselink strata $S^n_{\tau}$ 
to stabilise for large $n$, or even to be comparable. 
For example, we know that there are some sheaves in 
$S^n_{\tau}$ that do not belong to $S^{n'}_{\tau}$ for any $n' >\!> n$; 
however, as $n$ increases there are more and more sheaves that become 
$n$-regular and so can appear in each stratum. 
To circumvent this difficulty, we restrict our attention 
to sufficiently large substacks $\cR^n_{\tau} \subset \cS^n_{\tau}$ 
that form a descending chain. 

\begin{defn}
For each surviving Hesselink index $\tau$, we fix a natural number $N_\tau$ such 
that $\cC oh^{\tau}_P(X) \subset \cS^n_{\tau}$ for all $n \geq N_\tau$.
Then let 
$\cR^{N_\tau}_\tau := \cS_\tau^{N_\tau}$ and
\[ \cR^n_{\tau} := \cR^{n-1}_{\tau} \times_{\cC oh^{n-\reg}_P(X)} \cS^n_{\tau}\]
for $n > N_\tau$. Then we have a descending chain $\cdots \cdots \hookrightarrow \cR^{n}_{\tau} \hookrightarrow 
\cR^{n-1}_{\tau} \hookrightarrow \cdots \cdots \hookrightarrow \cR^{N_{\tau}}_{\tau}$. 
We let $\cH ess_{\tau}$ be the inductive limit of this chain and call this 
the infinite Hesselink stratum for $\tau$.
\end{defn}

\begin{prop}
Let $\tau$ be a surviving Hesselink index; then $\cR^{n'}_{\tau} = \cC oh_P^\tau(X)$ 
for $n' >\! > 0$. In particular, $\cH ess_{\tau} = \cC oh_P^\tau(X)$.
\end{prop}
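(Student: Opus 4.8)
The plan is to prove the two inclusions $\cC oh_P^\tau(X) \subseteq \cR^{n'}_\tau$ and $\cR^{n'}_\tau \subseteq \cC oh_P^\tau(X)$ for $n' >\!> 0$, working first at the level of the sheaves parametrised and then upgrading to an equality of substacks. I would begin by unwinding the recursive definition of $\cR^n_\tau$: since each fibre product is taken over $\cC oh^{n-\reg}_P(X)$ and $\cR^{n-1}_\tau$ sits inside this stack as an open substack, a sheaf $\cF$ is represented by a point of $\cR^n_\tau$ precisely when $\cF$ is $N_\tau$-regular and $\cF$ lies in every refined Hesselink stratum $\cS^{n''}_\tau$ for $N_\tau \leq n'' \leq n$. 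The forward inclusion is then immediate from the choice of $N_\tau$: by Corollary \ref{cor Shatz strata} every sheaf of HN type $\tau$ is $N_\tau$-regular, and by the defining property of $N_\tau$ we have $\cC oh^\tau_P(X) \subseteq \cS^n_\tau$ for all $n \geq N_\tau$, so $\cC oh^\tau_P(X) \subseteq \cR^{n'}_\tau$ already holds for every $n' \geq N_\tau$.

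The substance is the reverse inclusion, and the key tool is the sandwich of Proposition \ref{relation of strata}. As the chain is descending, $\cR^{n'}_\tau \subseteq \cR^{N_\tau}_\tau = \cS^{N_\tau}_\tau$, and the latter is covered by the finite union $\bigsqcup_{\tau' \in \cB^{N_\tau}_\tau} \cC oh^{\tau'}_P(X)$ indexed by HN types $\tau' \geq \tau$. It therefore suffices to show that each strictly larger type $\tau' \in \cB^{N_\tau}_\tau$ with $\tau' > \tau$ is eliminated at some finite stage. Fixing such a $\tau'$, Proposition \ref{relation of strata} applied to $\tau'$ produces an $n''$ with $\cC oh^{\tau'}_P(X) \subseteq \cS^{n''}_{\tau'}$; since $\tau \neq \tau'$ are distinct tuples of polynomials and hence distinct refined Hesselink indices, the strata $\cS^{n''}_\tau$ and $\cS^{n''}_{\tau'}$ are disjoint, so $\cC oh^{\tau'}_P(X) \cap \cS^{n''}_\tau = \emptyset$ and no such sheaf survives into $\cR^{n''}_\tau$. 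Because $\cB^{N_\tau}_\tau$ is finite, taking $n'$ to be the maximum of the finitely many thresholds removes all higher strata at once, leaving $\cR^{n'}_\tau \subseteq \cC oh^\tau_P(X)$.

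Combining the two inclusions gives equality of the sets of sheaves parametrised. To conclude an equality of substacks, I would observe that the inclusion $\cC oh^\tau_P(X) \hookrightarrow \cR^{n'}_\tau$ from the forward step is a morphism of locally closed substacks of $\cC oh_P(X)$ that is bijective on points, hence an isomorphism; here one uses that $\cC oh^\tau_P(X)$ occurs as the open stratum inside the finite union $\bigsqcup_{\tau' \in \cB^{N_\tau}_\tau} \cC oh^{\tau'}_P(X)$, the openness being a consequence of the closure relation $\overline{\cC oh^\tau_P(X)} \subset \bigsqcup_{\tau' \geq \tau} \cC oh^{\tau'}_P(X)$ of Corollary \ref{Shatz strat}. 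Finally, since the descending chain $\cR^n_\tau$ is eventually constant equal to $\cC oh^\tau_P(X)$, its inductive limit satisfies $\cH ess_\tau = \cC oh^\tau_P(X)$.

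I expect the main obstacle to be the bookkeeping in the reverse inclusion---specifically, confirming that a single $n'$ works uniformly across the whole finite set $\cB^{N_\tau}_\tau$, and checking that the fibre-product definition of $\cR^n_\tau$ genuinely computes the pointwise intersection of the strata while keeping track of regularity as $n$ grows. The structural fact driving the whole argument is the disjointness of distinct refined Hesselink strata $\cS^{n''}_\tau$ and $\cS^{n''}_{\tau'}$, which is what forces the strictly larger HN types to leave the $\tau$-stratum once $n''$ is large, and this is the step that must be invoked with care.
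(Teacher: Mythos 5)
Your proposal is correct and follows essentially the same route as the paper: the forward inclusion from the definition of $N_\tau$, the containment $\cR^{n'}_\tau \subset \cS^{N_\tau}_\tau \subset \bigsqcup_{\tau' \in \cB_\tau} \cC oh^{\tau'}_P(X)$ with $\cB_\tau$ finite, and the elimination of each $\tau' \neq \tau$ via the disjointness of the refined Hesselink strata $\cS^{n'}_\tau$ and $\cS^{n'}_{\tau'}$ once $n'$ exceeds the finitely many thresholds. The only difference is that you spell out the unwinding of the fibre products and the upgrade from pointwise equality to equality of substacks, which the paper leaves implicit.
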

\begin{proof}
By Proposition \ref{relation of strata}, we have that
\[ \cS^{N_\tau}_\tau \subset \bigsqcup_{\tau' \in \cB_\tau} \cC oh^{\tau'}_P(X)\]
where the index set $\cB_\tau$ is finite and 
$ \cC oh^{\tau'}_P(X) \subset \cS_{\tau'}^{n'}$ for all $\tau' \in \cB_\tau$ 
and $n' >\!> N_\tau$. 
By construction, $ \cC oh^{\tau}_P(X)$ is a substack of $\cR^n_\tau$ and 
$\cR^{n}_\tau$ is a substack of $\cS^n_\tau$ for $n \geq N$. 
We claim that, $\cR_\tau^{n'}$ is a substack of $\cC oh_\tau^{n'}$ for 
$n' >\!> N_\tau$ and, therefore, $\cR_\tau^{n'} = \cC oh^\tau_P(X)$ for 
$n' >\!> N_\tau$.
To prove the claim, we recall that
\[ \cR^{n'}_\tau \subset \cS^{N_\tau}_\tau \subset \bigsqcup_{\tau' \in \cB_\tau}
 \cC oh^{\tau'}_P(X)\]
 and $ \cC oh^{\tau'}_P(X) \subset \cS^{n'}_{\tau'}$, for $n'>\!>N_{\tau}$ and 
 $\tau' \in \cB_\tau$.
  If $\cR^{n'}_\tau$ meets $ \cC oh^{\tau'}_P(X)$ for some $\tau' \neq \tau$,
   then this implies that $\cS^{n'}_\tau$ meets $\cS^{n'}_{\tau'}$, which 
 contradicts the disjointness of these finite Hesselink strata. Therefore, $\cR^{n'}_\tau$ is a substack of $\cC oh_\tau^{n'}$ for $n' >\!> N_\tau$ and this completes the proof.
 \end{proof}
 
\begin{rmk} It is easy to check that a sheaf is represented by a point of 
$\cH ess_\tau$  if and only if it is represented by a point of $\cS^n_\tau$ 
 for $n >\!> 0$.\end{rmk}
 
\begin{cor}\label{cor Hess is HN on stack}
The infinite Hesselink stratification on the stack $\cC oh_P(X)$
\[ \cC oh_P(X) = \bigsqcup_{\tau} \cH ess_{\tau}\]
coincides with the stratification by Harder--Narasimhan types.
\end{cor}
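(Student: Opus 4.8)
The plan is to recognise that this corollary is the immediate harvest of the two results directly preceding it, so the proof amounts to assembling them and checking that the coincidence holds not merely for the underlying decompositions but for the order data that the word ``stratification'' carries in the sense of the Notation section. Concretely, I would compare the Harder--Narasimhan stratification $\cC oh_P(X) = \bigsqcup_{\tau \in \cH_P} \cC oh_P^\tau(X)$ of Corollary \ref{Shatz strat} with the infinite Hesselink stratification $\cC oh_P(X) = \bigsqcup_\tau \cH ess_\tau$, and show that the two are literally the same decomposition of the stack into substacks, after which everything else is automatic.

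First I would match the index sets. By the Lemma above, a tuple of polynomials $\tau$ is a surviving Hesselink index, that is $\cS^n_\tau$ is non-empty for $n \gg 0$ and hence $\cH ess_\tau$ is defined, precisely when $\tau \in \cH_P$ is a Harder--Narasimhan type. Thus both decompositions are indexed by the same set $\cH_P$, and there is no bookkeeping mismatch between the two families of strata. Second, for each such $\tau$ I would invoke the Proposition immediately above, which establishes $\cR^{n'}_\tau = \cC oh_P^\tau(X)$ for $n' \gg 0$ and hence $\cH ess_\tau = \cC oh_P^\tau(X)$ as substacks of $\cC oh_P(X)$; this identifies the two stratifications term by term rather than up to an abstract relabelling.

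Finally I would confirm the order relation carries over. Since $\cH ess_\tau = \cC oh_P^\tau(X)$ as locally closed substacks, the closure relation $\overline{\cC oh_P^\tau(X)} \subset \bigsqcup_{\tau' \geq \tau} \cC oh_P^{\tau'}(X)$ from Corollary \ref{Shatz strat} transfers verbatim, so the infinite Hesselink stratification inherits exactly the Harder--Narasimhan partial order and the two coincide as stratifications in the weak sense used here. I do not expect a genuine obstacle at this stage: the substantive difficulty was already dispatched in the Proposition, where one had to show that the descending chain $\cR^n_\tau$ stabilises to $\cC oh_P^\tau(X)$ by playing off the finite containments of Proposition \ref{relation of strata} against the disjointness of the finite Hesselink strata. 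The only point warranting a moment's care in the present corollary is the reminder that ``coincide'' includes the closure/order data, and this follows at once from the strata being equal on the nose.
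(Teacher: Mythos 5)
Your proposal is correct and follows exactly the route the paper intends: the corollary is an immediate assembly of the preceding lemma (surviving Hesselink indices are precisely HN types) and the preceding proposition ($\cH ess_\tau = \cC oh_P^\tau(X)$), with the partial order transferring from Corollary \ref{Shatz strat} once the strata are identified on the nose. The paper gives no separate argument for the corollary, so your spelled-out deduction matches its implicit proof.
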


\section{A functorial point of view}

In \cite{ack}, \'{A}lvarez-C\'{o}nsul and King give a functorial construction 
of the moduli space of semistable sheaves on $(X, \cO_X(1))$, 
using a functor
\[ \Phi_{n,m} : \textbf{Coh}(X) \ra \textbf{Reps}({K_{n,m}})\]
from the category of coherent sheaves on $X$ to the category of representations 
of a Kronecker quiver $K_{n,m}$. They prove that, 
for $m >\!> n > \!> 0$, this functor embeds the subcategory of semistable sheaves 
with Hilbert polynomial $P$ into a subcategory of $\theta_{n,m}(P)$-semistable 
quiver representations of dimension $d_{n,m}(P)$ and construct the moduli space 
of semistable sheaves on $X$ with Hilbert polynomial $P$ by using King's construction 
of the moduli spaces of quiver representations. In this section, we consider an 
associated morphism of stacks and describe the relationship between the Hesselink and 
HN strata for sheaves and quivers.

\subsection{Overview of the construction of the functor}

Let $X$ be a projective scheme of finite type over $k$ with very ample 
line bundle $\cO_X(1)$ and let $\textbf{Coh}(X)$ denote the category of 
coherent sheaves on $X$. For natural numbers $m > n$, we let 
$K_{n,m}$ be a Kronecker quiver with vertex set $V:=\{n,m\}$ and 
$\dim H^0(\cO_X(m-n))$ arrows from $n$ to $m$: 
\[ K_{n,m} =  \quad \left( \begin{array}{ccc}  & \longrightarrow & 
\\ n & \vdots & m \\ \bullet & H^0(\cO_X(m-n)) & \bullet \\  & \vdots &  \\ 
& \longrightarrow & \end{array} \right).\]

For natural numbers $m >n$,  we consider the functor 
\[ \Phi_{n,m}:= \text{Hom} (\cO_X(-n) \oplus \cO_X(-m),\:-\:) : \textbf{Coh}(X) \ra \textbf{Reps}({K_{n,m}})\]
that sends a sheaf $\cE$ to the representation $W_\cE$ of $K_{n,m}$ where
\[ W_{\cE,n}:=H^0(\cE(n))  \quad \quad \quad \quad W_{\cE,m}:=H^0(\cE(m)) \]
and the evaluation map $H^0(\cE(n)) \otimes H^0(\cO_X(m-n)) \ra  H^0(\cE(m))$ gives the arrows.

We fix a Hilbert polynomial $P$ and let $\textbf{Coh}_{P}^{n-\reg}(X)$ be the 
subcategory of $\textbf{Coh}(X)$ consisting of $n$-regular sheaves with Hilbert 
polynomial $P$. Then the image of $\Phi_{n,m}$ restricted to $\textbf{Coh}_{P}^{n-\reg}(X)$ 
is contained in the subcategory of quiver representations of dimension vector 
 $d_{n,m}(P)=(P(n),P(m))$. Furthermore, by \cite{ack} Theorem 3.4, if 
 $\cO_X(m-n)$ is regular, then the functor
\[ \Phi_{n,m} : \textbf{Coh}_{P}^{n-\reg}(X) \ra \textbf{Reps}_{d_{n,m}(P)}(K_{n,m})\]
is fully faithful. Let us consider the stability parameter $ \theta_{n,m}(P): = (-P(m),P(n))$ 
for representations of $K_{n,m}$ of dimension $d_{n,m}(P)$. 

\begin{thm}[\cite{ack}, Theorem 5.10]\label{ack thm}
For $m >\!> n > \!> 0$, depending on $X$ and $P$, a sheaf $\cE$ with Hilbert 
polynomial $P$ is semistable if and only if it is pure, $n$-regular and the 
quiver representation $\Phi_{n,m}(\cE)$ is $\theta_{n,m}(P)$-semistable.
\end{thm}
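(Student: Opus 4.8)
The plan is to build a dictionary between subsheaves of $\cE$ and subrepresentations of $W_\cE=\Phi_{n,m}(\cE)$ and to check that it matches the two numerical stability conditions. First I would record the elementary translation: a subrepresentation $W'=(W'_n,W'_m)$ of $W_\cE$ has $\theta_{n,m}(P)(W') = -P(m)\dim W'_n + P(n)\dim W'_m$, so $\theta_{n,m}(P)(W')\ge 0$ is equivalent to $\frac{\dim W'_n}{\dim W'_m}\le \frac{P(n)}{P(m)}$. On the sheaf side, by Rudakov's ordering a subsheaf $\cF\subset\cE$ respects Gieseker semistability precisely when $P(\cF)\preccurlyeq P(\cE)$, i.e. $\frac{P(\cF,n)}{P(\cF,m)}\le\frac{P(n)}{P(m)}$ for $m\gg n\gg 0$. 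Hence, whenever $\cF$ is $n$-regular so that $\dim H^0(\cF(n))=P(\cF,n)$ and $\dim H^0(\cF(m))=P(\cF,m)$, the subsheaf $\cF$ respects semistability of $\cE$ if and only if the subrepresentation $W_\cF:=\Phi_{n,m}(\cF)$ satisfies $\theta_{n,m}(P)(W_\cF)\ge 0$. The whole proof is then a matter of promoting this equivalence, valid for $n$-regular sub-objects, to an equivalence of the two global semistability notions, by controlling the sub-objects that are not visibly $n$-regular.

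For the forward direction let $\cE$ be Gieseker semistable; by the Simpson--Le Potier boundedness it is pure and $n$-regular for $n\gg 0$, so only $\theta$-semistability of $W_\cE$ remains. Given a subrepresentation $W'$, shrinking $W'_m$ to the span $\langle W'_n\rangle_m$ of the images of $W'_n$ under the evaluation maps only lowers $\theta_{n,m}(P)(W')$, so I may assume $W'$ is generated by $W'_n$, and I attach to it the subsheaf $\cF=\langle W'_n\rangle\subseteq\cE$ it generates. In the clean case $W'_n=H^0(\cF(n))$ with $\cF$ $n$-regular, surjectivity of the multiplication maps gives $\langle W'_n\rangle_m=H^0(\cF(m))$, so $W'$ is identified with $W_\cF$, and then $\theta_{n,m}(P)(W')\ge 0$ is exactly $P(\cF)\preccurlyeq P(\cE)$, which holds by semistability. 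The substance is to reduce a general generated subrepresentation to this clean case, bounding the discrepancy between $\dim W'_n$ and $h^0(\cF(n))$ by the Le Potier--Simpson estimates.

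For the converse assume $\cE$ is pure, $n$-regular and $W_\cE$ is $\theta_{n,m}(P)$-semistable, and suppose some $\cF\subsetneq\cE$ has $P^{\red}(\cF)>P^{\red}(\cE)$. Passing to the saturation only increases $P^{\red}$, so I may take $\cF$ saturated with $\cE/\cF$ pure. Using purity and $n$-regularity of $\cE$, the relevant destabilising subsheaves of fixed Hilbert polynomial lie in a bounded family (their maximal slopes being controlled), so for $n\gg 0$ such a $\cF$ is $n$-regular and $\dim W_{\cF,n}=P(\cF,n)$, $\dim W_{\cF,m}=P(\cF,m)$. The numerical translation then gives $\theta_{n,m}(P)(W_\cF)<0$, contradicting the $\theta$-semistability of $W_\cE$; hence $\cE$ is semistable. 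Purity enters exactly in guaranteeing that the destabiliser may be taken saturated with pure quotient, so that both have controlled regularity.

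The hard part is choosing $(n,m)$ with $m\gg n\gg 0$ uniformly over all $\cE$ with Hilbert polynomial $P$ and over all their sub-objects, and in particular handling sub-objects that are not $n$-regular, where $\dim W'_n$ may be strictly smaller than $P(\cF,n)$. This is exactly what the Le Potier--Simpson estimates are for: they bound $h^0(\cF(n))$ in terms of the multiplicity and maximal slope of $\cF$, which on the one hand confines the finitely many numerical types of destabilising sub-objects that must be made $n$-regular simultaneously, and on the other hand shows that a non-regular generated subsheaf cannot produce a subrepresentation with $\theta_{n,m}(P)(W')<0$. Assembling these uniform bounds so that a single pair $(n,m)$ works in both directions at once is the technical heart of the argument; once they are in place, the equivalence follows from the dictionary of the first paragraph.
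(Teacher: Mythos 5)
This theorem is not proved in the paper: it is quoted verbatim from \'{A}lvarez-C\'{o}nsul--King (\cite{ack}, Theorem 5.10), and the only thing the paper adds is a paragraph explaining how the constants $m >\!> n >\!> 0$ are chosen via conditions (C1)--(C5) of \cite{ack} \S 5.1 (all semistable sheaves $n$-regular, Le Potier--Simpson estimates valid at $n$, $\cO_X(m-n)$ regular, every subsheaf generated by a subspace $V' \subset H^0(\cE(n))$ being $m$-regular, and finitely many polynomial inequalities detectable at $m$). Your outline does reproduce the strategy of the cited proof: the numerical dictionary $\theta_{n,m}(P)(W') \geq 0 \iff \dim W'_n / \dim W'_m \leq P(n)/P(m)$, the reduction to subrepresentations generated by $W'_n$ (correct, since shrinking $W'_m$ only decreases $\theta$), the passage to the generated subsheaf $\cF = \langle W'_n \rangle$, and, for the converse, saturation plus Grothendieck-type boundedness of destabilising subsheaves so that a single $(n,m)$ handles the finitely many Hilbert polynomials that can occur.

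The one place where your text is an outline rather than a proof is the forward direction for subrepresentations whose generated subsheaf $\cF$ is not $n$-regular. There you need two things that you name but do not establish: first, that $\dim W'_m$ equals $h^0(\cF(m)) = P(\cF,m)$, which requires surjectivity of $W'_n \otimes H^0(\cO_X(m-n)) \to H^0(\cF(m))$ and not merely $m$-regularity of $\cF$ (this is exactly one of the conditions imposed on $m$ in \cite{ack}); and second, the inequality $P(m)\dim W'_n \leq P(n)P(\cF,m)$ when $\dim W'_n \leq h^0(\cF(n))$ and $h^0(\cF(n))$ may exceed $P(\cF,n)$. That second inequality is the entire analytic content of the theorem --- it is where the Le Potier--Simpson bound on $h^0(\cF(n))$ in terms of multiplicity and maximal slope is actually applied, uniformly over the a priori unbounded family of generated subsheaves --- and invoking the estimates by name does not discharge it. So the proposal is a faithful skeleton of the right argument, with its load-bearing quantitative step left unexecuted; since the paper itself defers exactly this to \cite{ack}, the appropriate conclusion is that your sketch matches the cited proof in structure but would need \cite{ack} \S 5 (or an equivalent computation) to be complete.
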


We briefly describe how to pick $m >\!> n > \!> 0$ as required for this theorem to hold; 
for further details, we refer to the reader to the conditions (C1) - (C5) stated in 
\cite{ack} $\S$5.1. First, we take $n$ so all semistable sheaves with Hilbert polynomial 
$P$ are $n$-regular and the Le Potier--Simpson estimates hold. Then we choose $m$ so 
$\cO_X(m-n)$ is regular and, for all $n$-regular sheaves $\cE$ and vector subspaces 
$V' \subset H^0(\cE(n))$, the subsheaf $\cE'$ generated by $V'$ under the evaluation 
map $H^0(\cE(n)) \otimes \cO(-n) \ra \cE$ is $m$-regular. Finally, we take 
$m$ is sufficiently large so a finite list of polynomial inequalities can be determined 
by evaluation at $m$ (see (C5) in \cite{ack}).

We can alternatively consider the functor $\Phi_{n,m}$ as a morphism of stacks. 
We recall we have isomorphisms of stacks 
\[  \cC oh_{P}^{n-\reg}(X)  \cong [Q^{n-\reg}/G_n],\]
where $G_n = \GL(V_n)$, and
\[\cR \text{eps}_{d_{n,m}(P)}(K_{n,m})\cong[\rep_{d_{n,m}(P)}(K_{n,m})/\overline{G}_{d_{n,m}(P)}(K_{n,m})].\]

Let $\cU_n$ be the universal quotient sheaf over $Q^{n-\reg} \times X$ and 
$p : Q^{n-\reg} \times X \ra Q^{n-\reg}$ be the projection map. 
By definition of $Q^{n-\reg}$, we have that $R^ip_* (\cU_n(n)) = 0$ for $ i > 0$;
therefore, by the semi-continuity theorem, $p_*(\cU_n(n))$ 
is a vector bundle over $Q^{n-\reg}$ of rank $P(n)$ and similarly $p_*(\cU_n(m))$ 
is a rank $P(m)$ vector bundle. Hence, by using the evaluation map, we have obtain 
a family of representations of $K_{n,m}$ of dimension $d_{n,m}(P)$ parametrised by 
$Q^{n-\reg}$ that induces a morphism
\[ Q^{n-\reg} \ra \cR \text{eps}_{d_{n,m}(P)}(K_{n,m}).\]
As this morphism is $G_n$-invariant, it descends to a morphism
\[\Phi_{n,m} : \cC oh_{P}^{n-\reg}(X) \ra \cR \text{eps}_{d_{n,m}(P)}(K_{n,m})\]
where we continue to use the notation $\Phi_{n,m}$ to mean the morphism of stacks.

\subsection{Image of the Hesselink strata}

In this section, we study the image of the Hesselink strata under the map
\[\Phi_{n,m} : \cC oh_{P}^{n-\reg}(X) \ra \cR \text{eps}_{d_{n,m}(P)}(K_{n,m}).\]

Let
\[ \quot(V_n \otimes \cO_X(-n),P) = \bigsqcup_{\beta \in \cB_{n,m}} S_\beta\]
be the Hesselink stratification associated to the $\SL(V_n)$-action on this 
Quot scheme with respect to $L_{n,m}$ as we described in $\S$\ref{sec Hess quot}. 
As in $\S$\ref{sec sheaf strat compare}, we consider the induced stratification 
on the stack of $n$-regular sheaves
\begin{equation}\label{hess quot} 
\cC oh^{n-\reg}_P(X) = \bigsqcup_{\beta} \cS_\beta^{n,m}
\end{equation}
where $\cS^{n,m}_\beta = [S^{n-\reg}_\beta/G_n]$ and $S_\beta^{n-\reg} $ is the fibre 
product of $ Q^{n-\reg}$ and $S_\beta$ in this Quot scheme.

We recall that the unstable Hesselink strata are indexed by conjugacy classes of rational 
1-PSs $\lambda_\beta$ of $\SL(V_n)$. Equivalently, the index $\beta$ is 
given by a collection of strictly decreasing rational weights 
$r(\beta) = (r_1, \dots , r_s)$ and multiplicities $l(\beta) = (l_1, \dots , l_s)$ 
satisfying $\sum_{i=1}^s l_i = P(n)$ and $\sum_{i=1}^s r_i l_i = 0$. More precisely, 
we recall that the rational 1-PS associated to $r(\beta)$ and $l(\beta)$ is
\[\lambda_\beta(t)=\diag (t^{r_1},\dots , t^{r_1}, \dots , t^{r_s} ,\dots ,t^{r_s})\]
where $r_i$ appears $l_i$ times.

To define a Hesselink stratification on the stack of representations of $K_{n,m}$ 
of dimension vector $d_{n,m}(P)$, we need to choose a parameter $\alpha \in \NN^2$ 
which defines a norm $|| - ||_\alpha$. 
We choose $\alpha =\alpha_{n,m}(P):=(P(m),P(n))$ due to the following lemma. 

\begin{lemma}\label{how to pick alpha}
Let $\theta=\theta_{n,m}(P)$ and $\alpha=\alpha_{n,m}(P)$. 
Then, for sheaves $\cE $ and $\cF$, we have
\[ \frac{\theta(W_\cE)}{\alpha(W_\cE)} \geq \frac{\theta(W_\cF)}{\alpha(W_\cF)} 
\iff \frac{H^0(\cE(n))}{H^0(\cE(m))} \leq \frac{H^0(\cF(n))}{H^0(\cF(m))}.\]
The same statement holds if we replace these inequalities with strict inequalities.
\end{lemma}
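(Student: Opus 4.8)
The plan is to compute both quotients explicitly and then reduce the claimed equivalence to a single cross-multiplication. First I would unwind the definitions of $\theta(W_\cE)$ and $\alpha(W_\cE)$ from Definition~\ref{defn HN}. Writing $a := \dim H^0(\cE(n))$, $b := \dim H^0(\cE(m))$ and abbreviating $p := P(n)$, $q := P(m)$, the choices $\theta = \theta_{n,m}(P) = (-P(m),P(n))$ and $\alpha = \alpha_{n,m}(P) = (P(m),P(n))$ give
\[ \theta(W_\cE) = -q\,a + p\,b \qquad \text{and} \qquad \alpha(W_\cE) = q\,a + p\,b, \]
and likewise $\theta(W_\cF) = -q\,a' + p\,b'$, $\alpha(W_\cF) = q\,a' + p\,b'$ with $a' := \dim H^0(\cF(n))$, $b' := \dim H^0(\cF(m))$, so that the ratios in the statement are exactly $a/b$ and $a'/b'$.

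Before manipulating any inequality I would record that all denominators are strictly positive. For $m > n \gg 0$ the Hilbert polynomial has positive leading coefficient, so $p,q > 0$; and for the nonzero $n$-regular sheaves to which the lemma is applied the dimensions $a,b$ (resp.\ $a',b'$) are positive, whence $\alpha(W_\cE),\alpha(W_\cF) > 0$. This is the step that makes cross-multiplication legitimate and pins down the direction of the resulting inequality.

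The core is then a one-line computation. Since both $\alpha$-values are positive, the inequality $\theta(W_\cE)/\alpha(W_\cE) \geq \theta(W_\cF)/\alpha(W_\cF)$ is equivalent to
\[ (-q\,a + p\,b)(q\,a' + p\,b') \geq (-q\,a' + p\,b')(q\,a + p\,b). \]
Expanding both products, the $aa'$- and $bb'$-terms cancel and the remaining cross terms combine, so the difference of the two sides equals exactly $2pq\,(a'b - a b')$. As $pq > 0$, this is nonnegative precisely when $a'b \geq a b'$, i.e.\ (dividing by $bb' > 0$) when $a/b \leq a'/b'$, which is the claimed equivalence. The strict version is obtained verbatim on replacing every $\geq$ by $>$, the only nonstrict input being the positivity of $pq$.

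I do not anticipate a genuine difficulty here; the only point demanding care is the positivity of the denominators, which is needed both for cross-multiplication to preserve the inequality and for the final passage from $a'b - a b' \geq 0$ to the ratio inequality. For that reason I would isolate the positivity as its own step rather than fold it into the computation.
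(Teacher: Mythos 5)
Your proof is correct and takes essentially the same route as the paper: both unwind the definitions of $\theta_{n,m}(P)$ and $\alpha_{n,m}(P)$ and reduce the equivalence to elementary algebra — the paper rewrites the ratio as $1 - \frac{2P(m)H^0(\cE(n))}{P(m)H^0(\cE(n)) + P(n)H^0(\cE(m))}$ and reads off monotonicity in $H^0(\cE(n))/H^0(\cE(m))$, whereas you cross-multiply and observe that the difference is $2P(n)P(m)\bigl(H^0(\cF(n))H^0(\cE(m)) - H^0(\cE(n))H^0(\cF(m))\bigr)$; these are the same computation. Your explicit isolation of the positivity of the denominators is a detail the paper leaves implicit, and is a reasonable thing to make precise.
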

\begin{proof}
By definition of these parameters, we have
\[ \frac{\theta(W_\cE)}{\alpha(W_\cE)}:=
 \frac{-P(m)H^0(\cE(n)) + P(n)H^0(\cE(m))}{P(m)H^0(\cE(n)) + P(n)H^0(\cE(m))}
  = 1 - \frac{2P(m)H^0(\cE(n))}{P(m)H^0(\cE(n)) + P(n)H^0(\cE(m))}.\]
From this, it is easy to check the desired equivalences of inequalities.
\end{proof}

By Theorem \ref{quiver HN is Hess}, we can equivalently 
view the Hesselink stratification (with respect to $\theta$ and $\alpha$) 
as a stratification by HN types:
\[\cR ep_{d_{n,m}(P)}(K_{n,m})=\bigsqcup_\gamma \cR \text{eps}_{d_{n,m}(P)}^{\gamma}(K_{n,m}).\]

\begin{defn}
For an index $\beta$ of the Hesselink stratification (\ref{hess quot}) on the stack 
of $n$-regular sheaves, 
we let $\gamma(\beta):=(d_1(\beta), \dots ,d_s(\beta))$ where
\[ d_i(\beta) = \left(l_i,l_i \frac{P(m)}{P(n)} - l_i r_i\right)\]
and $r(\beta) = (r_1, \dots , r_s)$ and $l(\beta) = (l_1, \dots , l_s)$.
\end{defn}

\begin{lemma}
Let $\beta$ be an index for the Hesselink stratification of $Q^{n-\reg}$; then 
$\gamma(\beta)$ is a HN type for a representation of $K_{n,m}$ of dimension 
$d_{n,m}$ with respect to $\theta$ and $\alpha$.
\end{lemma}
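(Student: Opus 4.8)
The plan is to verify the two defining numerical properties of a Harder--Narasimhan type from Definition \ref{defn HN} --- that the dimension vectors $d_i(\beta)$ sum to $d_{n,m}(P)=(P(n),P(m))$, and that the slopes $\theta(d_i(\beta))/\alpha(d_i(\beta))$ are strictly increasing in $i$ --- and then to exhibit an actual representation realising this type. First I would check the dimension condition: writing $d_i(\beta)=(l_i,\,l_i\tfrac{P(m)}{P(n)}-l_ir_i)$ and summing, the first coordinates give $\sum_i l_i=P(n)$ and the second coordinates give $\tfrac{P(m)}{P(n)}\sum_i l_i-\sum_i r_il_i$, which equals $P(m)$ after invoking the two constraints $\sum_i l_i=P(n)$ and $\sum_i r_il_i=0$ satisfied by every Hesselink index. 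Hence $\sum_i d_i(\beta)=d_{n,m}(P)$.

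For the slope ordering I would appeal to Lemma \ref{how to pick alpha}. The ratio of the two coordinates of $d_i(\beta)$ is
\[ \frac{l_i}{\,l_i\tfrac{P(m)}{P(n)}-l_ir_i\,}=\frac{P(n)}{P(m)-P(n)r_i}. \]
Because $\beta$ indexes a non-empty stratum, the second coordinate $l_i\tfrac{P(m)}{P(n)}-l_ir_i=P(\cE^i,m)$ of a graded piece is positive, so each denominator $P(m)-P(n)r_i$ is positive; since the weights $r_1>\cdots>r_s$ are strictly decreasing and $P(n)>0$, these denominators strictly increase and the ratios strictly decrease. By Lemma \ref{how to pick alpha} a decreasing ratio corresponds to an increasing slope, so $\tfrac{\theta(d_1)}{\alpha(d_1)}<\cdots<\tfrac{\theta(d_s)}{\alpha(d_s)}$, exactly the inequality required of a HN type. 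Equivalently one computes directly $\theta(d_i(\beta))=-P(n)l_ir_i$ and $\alpha(d_i(\beta))=2P(m)l_i-P(n)l_ir_i$, whose quotient $-P(n)r_i/(2P(m)-P(n)r_i)$ is manifestly strictly decreasing in $r_i$.

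Finally I would realise $\gamma(\beta)$ as the HN type of an honest representation. Since $\beta$ indexes a non-empty Hesselink stratum, Corollary \ref{cor Hess quot} provides semistable sheaves $\cE^i$ with $(h^0(\cE^i(n)),h^0(\cE^i(m)))=d_i(\beta)$, and by Theorem \ref{ack thm} each $W_{\cE^i}=\Phi_{n,m}(\cE^i)$ is a $(\theta,\alpha)$-semistable representation of dimension $d_i(\beta)$; the direct sum $\bigoplus_i W_{\cE^i}$ then has HN type $\gamma(\beta)$, its HN filtration being the slope filtration.

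The step I expect to be the main obstacle is this last one, and specifically the matching of stability parameters: the pieces $\cE^i$ have Hilbert polynomial $P_i\neq P$, so the King parameter naturally attached to $\cE^i$ is $\theta_{n,m}(P_i)$ rather than the fixed global $\theta=\theta_{n,m}(P)$, $\alpha=\alpha_{n,m}(P)$ used to define the HN filtration. The reconciliation is that, for a representation of dimension $(P_i(n),P_i(m))$, King's condition $\theta_{n,m}(P_i)(W')\geq 0$ is, by Lemma \ref{how to pick alpha}, the very same ratio (slope) condition as $(\theta,\alpha)$-semistability, so the two notions coincide on this dimension vector; one also notes that the range $m\gg n\gg 0$ demanded by Theorem \ref{ack thm} can be chosen uniformly, since there are only finitely many indices $\beta$ and each involves finitely many polynomials $P_i$.
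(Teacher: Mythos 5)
Your first two steps are correct and essentially reproduce the paper's argument: the sum condition follows from $\sum_i l_i = P(n)$ and $\sum_i r_i l_i = 0$, and the strict monotonicity of the slopes from $r_1 > \cdots > r_s$ (your explicit computation $\theta(d_i(\beta))/\alpha(d_i(\beta)) = -P(n)r_i/(2P(m)-P(n)r_i)$ is a useful expansion of what the paper leaves implicit). The one numerical point the paper makes explicit, and which you should too, is integrality: a priori $l_i P(m)/P(n) - l_i r_i$ is only rational, and the paper's third step is precisely to observe that non-emptiness of the stratum forces the limit set $Z_d^\lambda$ to be non-empty, so that by Proposition \ref{Zdlambda} this quantity equals $P(\cE_i,m) \in \NN$. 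You invoke exactly this identification to get positivity of your denominators, so the integrality argument is implicitly present, but it deserves to be flagged as the actual content of the appeal to non-emptiness rather than buried inside the slope computation.

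Your third step, however, contains a genuine error, and it is the step you yourself flag as the main obstacle. Corollary \ref{cor Hess quot} does not provide semistable sheaves: it provides quotients $q^i : W^i \otimes \cO_X(-n) \onto \cE^i$ that are GIT-semistable for $\SL(W^i)$ acting on the full Quot scheme with respect to $L_{n,m}$ for the \emph{fixed} pair $(n,m)$. Simpson's identification of GIT semistability with Gieseker semistability holds only on the closed subscheme $R_n^i$ and only for $m \gg n \gg 0$ depending on $P_i$, and Theorem \ref{ack thm} likewise requires $m \gg n \gg 0$ depending on $P_i$. Since $n$ and $m$ are fixed before $\beta$ is chosen and the polynomials $P_i$ occurring across the various indices $\beta$ are not controlled, your proposed uniform choice is circular: enlarging $(n,m)$ changes the linearisation, hence the stratification, hence the set of indices. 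Indeed the paper explicitly remarks, just after the proposition on the image of the Hesselink strata, that one cannot deduce $(\theta,\alpha)$-semistability of the $W_i$ from GIT semistability of the $q_i$, and that it is unclear whether such a statement holds. Fortunately this step is not needed: the lemma, as used and as proved in the paper, asserts only that $\gamma(\beta)$ satisfies the numerical conditions defining a HN type (entries in $\NN^2$ summing to $d_{n,m}(P)$ with strictly increasing $(\theta,\alpha)$-slopes), so your proof becomes complete once the realization step is dropped and the integrality observation is made explicit.
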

\begin{proof}
Let $r(\beta) = (r_1, \dots , r_s)$ and $l(\beta) = (l_1, \dots , l_s)$ be as above; then  
$r_1 > \dots > r_s$ and 
\[ \sum_{i=1}^s l_i = \dim V_n = P(n) \quad \text{and} \quad \sum_{i=1}^s r_il_i = 0.\]
Let $\gamma(\beta):=(d_1(\beta), \dots ,d_s(\beta))$ be as above; then
\[ \sum_{i=1}^s d_i(\beta)
 =\left(\sum_{i=1}^n l_i, \sum_{i=1}^n l_i \frac{P(m)}{P(n)} - l_i r_i \right)
  = (P(n),P(m)).\]
Since $r_1 > \cdots > r_s$, 
it follows that
\[ \frac{\theta(d_1(\beta))}{\alpha(d_1(\beta))} < \frac{\theta(d_2(\beta))}{\alpha(d_2(\beta))} < 
\cdots \cdots < \frac{\theta(d_s(\beta))}{\alpha(d_s(\beta))}.\]
To complete the proof, we need to verify that 
\[ d_i(\beta) = \left(l_i,l_i \frac{P(m)}{P(n)} - l_i r_i\right) \in \NN^2.\] 
The first number $l_i$ is a multiplicity and so is a natural number, but 
the second number is a priori only rational. 
As $\beta$ is an index for the Hesselink stratification, it indexes 
a non-empty stratum $S_\beta$ and, as this stratum is constructed 
from its associated limit set $Z_d^\lambda$, by Theorem \ref{hess thm proj}, 
it follows that this limit set must also be non-empty. 
Hence this limit set contains a quotient 
sheaf $q : V_n \otimes \cO_X(-n) \ra \oplus_{i=1}^s \cE_i$ such that, by 
Proposition \ref{Zdlambda}, for $i=1, \dots, s$, we have
\[ P(\cE_i,m) = l_i \frac{P(m)}{P(n)} - l_i r_i.\]
Then, as $P(\cE_i,m) \in \NN$, this completes the proof.
\end{proof}

\begin{prop} For a Hesselink index $\beta$, we have 
\[ \Phi_{n,m} \left(\cS^{n,m}_\beta \right) 
\subset  \bigsqcup_{\gamma \geq \gamma(\beta)} \cR \text{eps}^\gamma_{d_{n,m}(P)}(K_{n,m}).\]
\end{prop}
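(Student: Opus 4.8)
The plan is to lift a point of $\cS^{n,m}_\beta$ to a quotient $q : V_n \otimes \cO_X(-n) \onto \cE$ in $S_\beta \cap Q^{n-\reg}$, apply $\Phi_{n,m}$ to the destabilising filtration of $\cE$ supplied by Corollary \ref{cor Hess quot}, and read off that the resulting filtration of the quiver representation $W_\cE$ has subquotients of dimension vector $d_i(\beta)$ with strictly increasing slopes; the comparison of Harder--Narasimhan polygons then forces $\tau(W_\cE) \geq \gamma(\beta)$. Since each stratum equals $\SL(V_n) Y_d^\lambda$ and $\Phi_{n,m}$ factors through the stack, I may work with a single such $\cE$, which by Corollary \ref{cor Hess quot} carries a filtration $0 = \cE^{(0)} \subset \cE^{(1)} \subset \cdots \subset \cE^{(s)} = \cE$ whose subquotients $\cE^i := \cE^{(i)}/\cE^{(i-1)}$ satisfy $P(\cE^i,m) = l_i P(m)/P(n) - l_i r_i$ and whose quotient maps $q^i$ are $\SL(W^i)$-semistable with $\dim W^i = l_i$.

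First I would apply $\Phi_{n,m} = \mathrm{Hom}(\cO_X(-n) \oplus \cO_X(-m), -)$ to this filtration. As $\mathrm{Hom}$ is left exact, each inclusion $\cE^{(i)} \hookrightarrow \cE$ induces an inclusion of representations $W_{\cE^{(i)}} \hookrightarrow W_\cE$ compatible with the evaluation maps, so we obtain a filtration $0 = W_{\cE^{(0)}} \subset W_{\cE^{(1)}} \subset \cdots \subset W_{\cE^{(s)}} = W_\cE$ by subrepresentations.

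The main step is to show that each subquotient $W_{\cE^{(i)}}/W_{\cE^{(i-1)}}$ has dimension vector $d_i(\beta) = (l_i, l_i P(m)/P(n) - l_i r_i)$. Applying $H^0((-)(n))$ and $H^0((-)(m))$ to the short exact sequence $0 \to \cE^{(i-1)} \to \cE^{(i)} \to \cE^i \to 0$ yields long exact cohomology sequences, and the subquotient dimension vector equals $\dim \Phi_{n,m}(\cE^i)$ exactly when the connecting maps into $H^1(\cE^{(i-1)}(n))$ and $H^1(\cE^{(i-1)}(m))$ vanish. This is where I expect the real work, and the main obstacle, to lie: I must control the cohomology of the filtration steps. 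Concretely, I would prove inductively that each $\cE^{(i)}$ is $n$-regular, which by the regularity properties of $n$-regular sheaves (an extension of $n$-regular sheaves is $n$-regular) reduces to the $n$-regularity of the subquotients $\cE^i$. Granting this, $H^1(\cE^{(i-1)}(n)) = H^1(\cE^{(i-1)}(m)) = 0$, the global section sequences are short exact, and $n$- and $m$-regularity give $\dim(W_{\cE^{(i)}}/W_{\cE^{(i-1)}})_n = P(\cE^i,n) = l_i$ and $\dim(W_{\cE^{(i)}}/W_{\cE^{(i-1)}})_m = P(\cE^i,m) = l_i P(m)/P(n) - l_i r_i$, as required. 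Securing this regularity for the fixed $(n,m)$ is the delicate point, and I would handle it in the range $m >\!> n >\!> 0$ already imposed in this section, using Simpson--Le Potier boundedness applied to the finitely many Hilbert polynomials occurring in the refined index set $\cC_\beta$.

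Finally, by the preceding lemma the weights $r_i$ are strictly decreasing, so by Lemma \ref{how to pick alpha} the slopes $\theta(d_i(\beta))/\alpha(d_i(\beta))$ are strictly increasing; the filtration $W_{\cE^{(\bullet)}}$ is therefore ordered exactly as a Harder--Narasimhan filtration. Using the identification of the HN and Hesselink stratifications for quivers (Theorem \ref{quiver HN is Hess}), the existence of a filtration of $W_\cE$ whose subquotients have dimension vectors $d_i(\beta)$ with strictly increasing slopes forces $\tau(W_\cE) \geq \gamma(\beta)$ by the comparison of Harder--Narasimhan polygons, giving $\Phi_{n,m}(\cS^{n,m}_\beta) \subset \bigsqcup_{\gamma \geq \gamma(\beta)} \cR\text{eps}^\gamma_{d_{n,m}(P)}(K_{n,m})$. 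The inequality is not an equality in general because the subquotients $\Phi_{n,m}(\cE^i)$ need not be $\theta$-semistable: the $\cE^i$ are $\SL(W^i)$-semistable as quotient sheaves, but Theorem \ref{ack thm} only converts this into $\theta$-semistability of the associated quiver representation in a range depending on each $P(\cE^i)$, which need not be satisfied by the chosen $(n,m)$.
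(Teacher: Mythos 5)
Your overall skeleton --- lift to a quotient $q \in S_\beta^{n-\reg}$, push the filtration supplied by Corollary \ref{cor Hess quot} through $\Phi_{n,m}$, and conclude by comparing Harder--Narasimhan polygons --- is exactly the paper's, and your closing explanation of why one only gets containment in the union over $\gamma \geq \gamma(\beta)$ is also the one the paper gives. However, the step you yourself flag as ``the delicate point'' is a genuine gap, and the route you propose for closing it does not work. You want each $\cE^{(i)}$ to be $n$-regular, deduced inductively from $n$-regularity of the subquotients $\cE^i$; but the $\cE^i$ arising in a Hesselink stratum are merely GIT-semistable quotients $q^i : W^i \otimes \cO_X(-n) \onto \cE^i$ with respect to $L_{n,m}$ --- they need not be pure, need not be semistable sheaves, and need not be $n$-regular for the fixed $n$ of the stratification. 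The Simpson--Le Potier bounds concern families of \emph{semistable} sheaves, so they do not apply here, and you cannot enlarge $n$ ``for the polynomials occurring in $\cC_\beta$'' because $n$ is fixed before $\beta$ is: changing $n$ changes the Quot scheme, the stratification, and the target quiver. Indeed, the paper's remark immediately following this proposition states explicitly that $H^1(\cE^{(i-1)}(n))$ does \emph{not} vanish in general and that consequently $W_i$ is only a subrepresentation of $W_{\cE_i}$, not isomorphic to it. There is a second problem even granting regularity: membership in $S_\beta$ only pins down the value $P(\cE^i,m)$ (condition (2) of Corollary \ref{cor Hess quot}), not $P(\cE^i,n)$, so your identity $P(\cE^i,n)=l_i$ has no source --- this is precisely why the refinement of Remark \ref{refine Hess} by tuples $\underline{P}$ is strictly finer than the stratification by $\beta$.

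The paper's proof sidesteps cohomological control of the subquotients entirely. The $n$-components of the filtration $W^{(\bullet)}$ are read off from the linear-algebra filtration $V_n^{(\bullet)}$ of $V_n \cong H^0(\cE(n))$, using that $H^0(q(n))$ is an isomorphism for $q \in Q^{n-\reg}$, which gives $\dim W^{(i)}_n = \dim V_n^{(i)} = l_1 + \cdots + l_i$ directly. The $m$-components are computed from the $m$-regularity of each $\cE^{(i)}$, which holds because $\cE^{(i)}$ is generated by a subspace of $H^0(\cE(n))$ for an $n$-regular sheaf $\cE$ and the \'{A}lvarez-C\'{o}nsul--King conditions on $(n,m)$ guarantee that such subsheaves are $m$-regular; no regularity of the $\cE^i$ themselves is needed, and $P(\cE^{(i)},m)$ is exactly what condition (2) of Corollary \ref{cor Hess quot} supplies. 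If you replace your long-exact-sequence computation with this, the remainder of your argument (strictly decreasing $r_i$, Lemma \ref{how to pick alpha}, polygon comparison) goes through as written.
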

\begin{proof}
Let $q : V_n \otimes \cO_X(-n) \onto \cE$ be a quotient sheaf in $S_\beta^{n-\reg}$ and 
let $r(\beta) = (r_1, \dots , r_s)$ and $l(\beta) = (l_1, \dots , l_s)$ be the associated 
rational weights and multiplicities; thus, $r_1 > \cdots > r_s$ and
\[ \lambda_\beta(t) = \diag (t^{r_1}, \dots , t^{r_1}, \dots , t^{r_s} ,\dots ,t^{r_s}) \]
where $r_i$ appears $l_i$ times. If $\lambda$ is the unique integral primitive 1-PS 
associated to $\lambda_\beta$, then $\beta = ([\lambda],d)$ where $d=||\lambda_\beta||$. 
The 1-PS $\lambda$ induces a filtration 
$0=V^{(0)}_n \subset V^{(1)}_n \subset \cdots \subset V^{(s)}_n = V_n$ 
such that the successive quotients $V^{i}_n$ have dimension $l_i$. 

By Corollary \ref{cor Hess quot}, there exists $g \in \SL(V_n)$ such that we have a filtration
\[ 0 = \cE^{(0)} \subset \cdots \subset \cE^{(i)} := g \cdot q(V^{(i)}_n \otimes \cO_X(-n)) \subset \cdots \subset \cE^{(s)}=\cE\]
where the Hilbert polynomials of $\cE^i := \cE^{(i)}/\cE^{(i-1)}$ satisfy
\[ P(n){P(\cE_i,m)}=l_i({P(m)} - r_iP(n))  \quad \quad \text{for } \: i = 1, \dots ,s. \]

Let $W^{(i)}:= W_{\cE^{(i)}}$ be the quiver representation associated to $\cE^{(i)}$; then we have a filtration
\begin{equation}\label{filtr of W} 
0 = W^{(0)} \subset W^{(1)} \subset \cdots \subset W^{(s)} =W:=W_\cE
\end{equation}
with $ \dim W^{(i)} := (\dim H^0(\cE^{(i)}(n)),\dim H^0(\cE^{(i)}(m)) ) 
= (\dim V_n^{(i)}, P(\cE^{(i)},m) )$, 
due to the fact that $H^0(q(n))$ is an isomorphism and $\cE^{(i)}$ is $m$-regular. 
Let $W_i := W^{(i)}/W^{(i-1)}$; then 
\[\dim W_i=(\dim V_n^i,P(\cE_i,m))=\left(l_i,l_i \frac{P(m)}{P(n)}-l_i r_i \right).\]
As we have a filtration (\ref{filtr of W}) of $W$ whose successive quotients 
have dimension vectors specified by $\gamma(\beta)$, it follows that 
$W_\cE$ has HN type greater than or equal to $\gamma(\beta)$.
\end{proof}

In the above proof, we note that the quiver representation $W_i$ is only isomorphic to 
$W_{\cE_i}$ if $H^{1}(\cE^{(i-1)}(n)) = 0$. In general, this is not the case, 
but it is always the case that $W_i$ is a subrepresentation of $W_{\cE_i}$, 
as $W_{i,n} \subset H^0(\cE_i(n))$ and $W_{i,m} \cong H^0(\cE_i(m))$. 
In particular, it was not possible to use GIT semistability properties of 
the quotient sheaves
$q_i : V_n^i \otimes \cO_X(-n) \ra \cE_i$ 
to deduce $(\theta,\alpha)$-semistability of $W_i$ 
(that is, to show that (\ref{filtr of W}) is the HN filtration of $W$). 
It is not clear to the author if such a result should hold. A more natural way 
to state the above result is the following.

\begin{cor} For a Hesselink index $\beta$, we have
\[ \Phi_{n,m} \left( \bigsqcup_{\beta' \geq \beta}\cS^n_{\beta'} \right) 
\subset  \bigsqcup_{\gamma' \geq \gamma(\beta)} \cR \text{eps}^{\gamma'}_{d_{n,m}(P)}(K_{n,m}).\]
\end{cor}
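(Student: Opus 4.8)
The plan is to deduce the corollary from the preceding proposition by a formal set-theoretic manipulation, the only genuine ingredient being that the assignment $\beta \mapsto \gamma(\beta)$ is order-preserving. First I would record the reduction. Since $\Phi_{n,m}$ sends a union of substacks to the union of their images,
\[\Phi_{n,m}\left(\bigsqcup_{\beta' \geq \beta}\cS^n_{\beta'}\right) = \bigcup_{\beta' \geq \beta}\Phi_{n,m}\left(\cS^{n,m}_{\beta'}\right),\]
and the preceding proposition bounds each term by $\Phi_{n,m}(\cS^{n,m}_{\beta'}) \subseteq \bigsqcup_{\gamma \geq \gamma(\beta')}\cR\text{eps}^\gamma_{d_{n,m}(P)}(K_{n,m})$. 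Hence the corollary follows once I know that $\beta' \geq \beta$ forces $\gamma(\beta') \geq \gamma(\beta)$: for then $\{\gamma : \gamma \geq \gamma(\beta')\} \subseteq \{\gamma : \gamma \geq \gamma(\beta)\}$ by transitivity of the order on quiver HN types, and taking the union over all $\beta' \geq \beta$ lands inside $\bigsqcup_{\gamma \geq \gamma(\beta)}\cR\text{eps}^\gamma$.

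The substance is therefore the monotonicity of $\gamma$, which I would establish by comparing the two partial orders through their polygonal descriptions. Writing $r(\beta') = (r_1, \dots, r_s)$ and $l(\beta') = (l_1, \dots, l_s)$, the partial sums of the quiver dimension vectors $d_i(\beta') = (l_i, l_i P(m)/P(n) - l_i r_i)$ trace out a polygon whose vertices are, after interchanging the two coordinates, precisely the vertices of the Shatz-type polygon attached to the weight--multiplicity data of $\beta'$ (for an index coming from a HN type $\tau$ one checks directly that $d_i(\beta)=(P_i(n),P_i(m))$, the coordinate swap of the Shatz vertex $(\sum_{j\le i}P_j(m),\sum_{j\le i}P_j(n))$). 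Both the Hesselink order on the indices $\beta$ and the HN order on quiver types $\gamma$ are given by a ``lies above'' comparison of these polygons in the regime $m \gg n \gg 0$, so that $\beta' \geq \beta$ translates into the corresponding polygon inequality for $\gamma(\beta')$ and $\gamma(\beta)$.

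I expect the main obstacle to be checking that the two orders genuinely align under this dictionary, and in the correct direction. Two points need care. First, the coordinate interchange reflects the polygons across the diagonal, so I must confirm that ``lies above'' for the sheaf-side (Shatz) polygons corresponds to the relation \emph{defining} the quiver HN order rather than to its reverse; this is exactly where the asymptotics $m \gg n \gg 0$ and the sign conventions of $\theta_{n,m}(P)$ and $\alpha_{n,m}(P)$ intervene, and Lemma \ref{how to pick alpha} is the tool that converts the quiver slope comparison into the comparison of the ratios $H^0(\cE(n))/H^0(\cE(m))$ governing the polygon. Second, because the paper uses the weaker notion of stratification, in which the closure of a stratum is only contained in, not equal to, the union of higher strata, I cannot shortcut the argument by a purely topological closure computation for $\Phi_{n,m}$; the monotonicity of $\gamma$ must be proved at the level of the combinatorial index sets. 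Once monotonicity is in hand, the formal chase of the first paragraph completes the proof.
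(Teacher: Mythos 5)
Your reduction is the right one, and it is essentially all the paper offers: the corollary is stated with no proof at all, as an immediate restatement of the preceding proposition, so its entire content beyond that proposition is the claim that $\beta' \geq \beta$ forces $\gamma(\beta') \geq \gamma(\beta)$, after which the union chase in your first paragraph is routine. You have correctly isolated this monotonicity as the crux.

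The gap is in your argument for the monotonicity itself. You assert that both the Hesselink order on the indices $\beta$ and the order on the quiver HN types $\gamma$ are ``lies above'' comparisons of polygons, but that is not how the paper sets up either order. The Hesselink order on the indices $\beta = ([\lambda],d)$ of the Quot-scheme stratification is governed by the single numerical invariant $d = -M^{L_{n,m}}$, i.e.\ by $||\lambda_\beta||$ (Theorem \ref{hess thm proj}(4)), while on the quiver side, via Theorem \ref{quiver HN is Hess}, the order on HN types is the Hesselink order for $(\rho_\theta, ||-||_\alpha)$, i.e.\ by $||\lambda_{\gamma}||_\alpha$. Writing these out, one finds $||\lambda_\beta||^2 = \sum_i r_i^2 l_i$, whereas $||\lambda_{\gamma(\beta)}||_\alpha^2 = \sum_i \theta(d_i(\beta))^2/\alpha(d_i(\beta)) = P(n)^2\sum_i r_i^2 l_i/(2P(m)-P(n)r_i)$: two weighted sums of the same quantities $r_i^2l_i$ with weights that depend on $r_i$, so an inequality between the first pair of norms does not formally transfer to the second, and you give no argument that it does for realizable indices. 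Your polygon dictionary would be the appropriate tool if both sides carried dominance orders (and even then one must track the full linear change of coordinates from the $(\dim_n,\dim_m)$-plane to the $(\alpha,\theta)$-plane, not merely a coordinate swap, and reconcile the dominance order with the closure order the Hesselink stratification actually satisfies); the consistency check you yourself flag as the ``main obstacle'' is never carried out. Since the paper also supplies no argument here, this is a genuine gap to be closed rather than a divergence from a written proof --- but your proposal, as it stands, does not close it.
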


\subsection{Image of the HN strata}

In this section, we study the image of the HN strata $\cC oh^{\tau}_P(X)$ under the map
\[\Phi_{n,m} : \cC oh_{P}^{n-\reg}(X) \ra \cR \text{eps}_{d_{n,m}(P)}(K_{n,m})\]
for $n$ and $m$ sufficiently large (depending on $\tau$). In fact, we show that a 
HN stratum for sheaves is mapped to a HN stratum for quiver representations. 
Let $\theta=\theta_{n,m}(P): = (-P(m),P(n))$ and 
$\alpha= \alpha_{n,m}(P) := (P(m),P(n))$ be as above.

\begin{defn} 
For a HN type $\tau = (P_1, \dots , P_s) \in \cH_P$ of a sheaf and natural 
 numbers $(n,m)$, we let 
 \[\gamma_{n,m}({\tau}) := (d_{n,m}(P_1), \dots , d_{n,m}(P_s))\] 
 where $d_{n,m}(P_i) = (P_i(n),P_i(m))$.
\end{defn}

As $\tau$ is a HN type of sheaves, we have that ${P_1} \succ {P_2} \succ \cdots \cdots \succ {P_s}$; thus,
\begin{equation}\label{HN type inequal} 
\frac{P_1(n)}{P_1(m)}> \frac{P_2(n)}{P_2(m)}  > \cdots \cdots > \frac{P_s(n)}{P_s(m)}
\quad \quad \text{for } \:  m >\!> n>\!> 0.
\end{equation}
Therefore, by Lemma \ref{how to pick alpha}, for $m >\!> n>\!> 0$, we have
\[ \frac{\theta(d_{n,m}(P_1))}{\alpha(d_{n,m}(P_1))} < \frac{\theta(d_{n,m}(P_2))}{\alpha(d_{n,m}(P_2))} < \cdots \cdots < \frac{\theta(d_{n,m}(P_s))}{\alpha(d_{n,m}(P_s))};\]
i.e., $\gamma_{n,m}(\tau)$ is a HN type for representations of $K_{n,m}$ of dimension $d_{n,m}(P)$ for $m >\!> n>\!> 0$.

\begin{thm}\label{thm ack HN}
Let $\tau = (P_1,\dots ,P_s) \in \cH_P$ be a HN type. Then, for $m >\!> n > \!> 0$, 
 \[\Phi_{n,m} \left( \cC oh_{P}^\tau(X)\right) \subset 
 \cR \text{eps}_{d_{n,m}(P)}^{\gamma_{n,m}({\tau})}(K_{n,m}).\]
\end{thm}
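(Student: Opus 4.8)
The plan is to build the Harder--Narasimhan filtration of $W_\cE := \Phi_{n,m}(\cE)$ directly from that of $\cE$ and to read off its type as $\gamma_{n,m}(\tau)$. First I would fix $m >\!> n >\!> 0$ (depending on $\tau$) so that simultaneously: each semistable sheaf with Hilbert polynomial $P_i$ is $n$-regular for $i = 1,\dots,s$; Theorem \ref{ack thm} applies to each of the finitely many polynomials $P_1,\dots,P_s$; and the inequalities (\ref{HN type inequal}) hold. For $\cE \in \cC oh_P^\tau(X)$ with HN filtration $0 = \cE^{(0)} \subset \cdots \subset \cE^{(s)} = \cE$ and semistable quotients $\cE_i = \cE^{(i)}/\cE^{(i-1)}$ of Hilbert polynomial $P_i$, each $\cE_i$ is then $n$-regular, so each $\cE^{(i)}$, being an iterated extension of $n$-regular sheaves, is $n$-regular and hence $m$-regular by the extension property for $n$-regularity.

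Next I would apply $\Phi_{n,m}$ to the HN filtration. Since $H^1(\cE^{(i-1)}(n)) = H^1(\cE^{(i-1)}(m)) = 0$ by $n$-regularity, the functor carries $0 \to \cE^{(i-1)} \to \cE^{(i)} \to \cE_i \to 0$ to a short exact sequence $0 \to W_{\cE^{(i-1)}} \to W_{\cE^{(i)}} \to W_{\cE_i} \to 0$ of $K_{n,m}$-representations. This produces a filtration $0 = W^{(0)} \subset \cdots \subset W^{(s)} = W_\cE$ with $W^{(i)} = W_{\cE^{(i)}}$ whose successive quotients are $W_{\cE_i}$, of dimension vector $(P_i(n), P_i(m)) = d_{n,m}(P_i)$ by $n$-regularity of $\cE_i$. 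This is precisely the exactness that failed in the Hesselink-stratum analysis, where $n$ need not be large enough to make the $\cE^{(i)}$ $n$-regular and one obtained only a containment $\gamma \geq \gamma(\beta)$; here it is what upgrades the containment to an equality of HN types.

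It then remains to verify that this is the HN filtration of $W_\cE$ for $(\theta,\alpha) = (\theta_{n,m}(P), \alpha_{n,m}(P))$. The strict slope inequalities between the $d_{n,m}(P_i)$ follow from (\ref{HN type inequal}) via Lemma \ref{how to pick alpha}, as recorded before the statement. The crux is the $(\theta,\alpha)$-semistability of each quotient $W_{\cE_i}$. Since $\cE_i$ is semistable, pure and $n$-regular, Theorem \ref{ack thm} gives that $W_{\cE_i}$ is $\theta_{n,m}(P_i)$-semistable, i.e. $P_i(m)\dim W'_n \leq P_i(n)\dim W'_m$ for every $0 \neq W' \subset W_{\cE_i}$. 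The key point — and the reason for choosing $\alpha = \alpha_{n,m}(P)$ — is that this is identical to $(\theta,\alpha)$-semistability: the computation underlying Lemma \ref{how to pick alpha} shows $\frac{\theta(W')}{\alpha(W')} \geq \frac{\theta(W'')}{\alpha(W'')}$ iff $\dim W'_n \dim W''_m \leq \dim W''_n \dim W'_m$, and applying this with $W'' = W_{\cE_i}$ (of dimension $(P_i(n),P_i(m))$) turns $(\theta,\alpha)$-semistability of $W_{\cE_i}$ into the same inequality $P_i(m)\dim W'_n \leq P_i(n)\dim W'_m$. By uniqueness of the HN filtration of quiver representations, $W_\cE$ then has HN type $\gamma_{n,m}(\tau)$, giving the asserted inclusion.

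The main obstacle I anticipate is exactly this last identification of the two stability notions — seeing that the $\theta_{n,m}(P_i)$-semistability of the constituents supplied by \'Alvarez-C\'onsul--King is precisely what makes the induced filtration the $(\theta_{n,m}(P), \alpha_{n,m}(P))$-Harder--Narasimhan filtration of $W_\cE$. The secondary difficulty is invoking Theorem \ref{ack thm} uniformly over $P_1,\dots,P_s$; this is possible only because $\tau$, and hence its list of constituents, is fixed, so a single choice of $m >\!> n >\!> 0$ serves all of them at once.
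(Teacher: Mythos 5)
Your proposal is correct and follows essentially the same route as the paper's proof: $n$-regularity of the HN subquotients forces exactness of $\Phi_{n,m}$ on the filtration, Theorem \ref{ack thm} supplies $\theta_{n,m}(P_i)$-semistability of each $W_{\cE_i}$, and the computation behind Lemma \ref{how to pick alpha} converts this into $(\theta_{n,m}(P),\alpha_{n,m}(P))$-semistability together with the strict slope inequalities. Your remark contrasting this with the Hesselink-stratum analysis (where the $\cE^{(i)}$ need not be $n$-regular) correctly identifies why the containment there cannot be upgraded but the HN statement here can.
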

\begin{proof}
We take $m >\!> n > \!> 0$ as needed for Theorem \ref{ack thm} for the 
Hilbert polynomials $P_1, \dots , P_s$. Furthermore, we assume that $m$ and $n$ 
are sufficiently large so the inequalities (\ref{HN type inequal}) hold.

Let $\cE$ be a sheaf on $X$ of HN type $\tau$ and HN filtration given by
\begin{equation}
\label{given filtr} 0 = \cE^{(0)} \subset \cE^{(1)} \subset \cdots \subset \cE^{(s)}= \cE.
\end{equation}
Let $W_\cE$ and $W^{(i)}:=W_{\cE^{(i)}}$ be the associated quiver representations; then we claim that
the induced filtration
\begin{equation}\label{filtr to show} 
0 = W^{(0)} \subset W^{(1)} \subset \cdots \subset W^{(s)} = W_\cE
\end{equation}
is the HN filtration of $ W_\cE$ with respect to $(\theta,\alpha)$ and, moreover, 
that $W_\cE$ has HN type $\gamma_{n,m}(\tau)$. 
Let $\cE_i$ and $W_i$ denote the successive subquotients in the above 
filtrations. Our assumptions on $n$ imply that each $\cE_i$ is $n$-regular and so, 
by induction, 
each $\cE^{(i)}$ is $n$-regular. Therefore, we have exact sequences
\[ 0 \ra H^0(\cE^{(i-1)}(n)) \ra H^0(\cE^{(i)}(n)) \ra H^0(\cE_i(n)) \ra 0\]
that give isomorphisms $W_{\cE_i} \cong W_i$. 
By Theorem \ref{ack thm}, as $\cE_i$ is semistable and $n$-regular with Hilbert 
polynomial $P_i$, the quiver representation $W_i$ is 
$\theta_i:=\theta_{n,m}(P_i)$-semistable. For a subrepresentation 
$W' \subset W_i$, we note that 
\[ \theta_i(W') \geq 0 \iff P_i(n)\dim W'_{v_m}\geq P_i(m)\dim W'_{v_n}  
\iff \frac{\theta(W')}{\alpha(W')} \geq \frac{\theta(W_i)}{\alpha(W_i)}.\]
Therefore, $\theta_i$-semistability of $W_i$ implies $(\theta,\alpha)$-semistability 
of $W_i$. To finish the proof of the claim, i.e. to prove that (\ref{filtr to show}) 
is the HN filtration of $ W_\cE$, it suffices to check that
\[ \frac{\theta(W_1)}{\alpha(W_1)} < \frac{\theta(W_2)}{\alpha(W_2)} 
< \cdots \cdots < \frac{\theta(W_s)}{\alpha(W_s)} \]
or, equivalently, by Lemma \ref{how to pick alpha}, that
\[ \frac{H^0(\cE_1(n))}{H^0(\cE_1(m))} > \frac{H^0(\cE_2(n))}{H^0(\cE_2(m))} 
> \cdots \cdots > \frac{H^0(\cE_s(n))}{H^0(\cE_s(m))}.\]
Since $\cE_i$ is $n$-regular, we have that $H^0(\cE_i(n)) = P_i(n)$ 
and $H^0(\cE_i(m))=P_i(m)$; thus, the above inequalities are equivalent to 
(\ref{HN type inequal}). Moreover, this shows that $ W_\cE$ has HN type 
$\gamma_{n,m}(\tau)$. 
\end{proof}

\begin{rmk}
The assignment $\tau \mapsto \gamma_{n,m}(\tau)$ is not injective 
for exactly the same reason as mentioned in $\S$\ref{sec sheaf strat compare}. 
In fact, more generally, $P \mapsto d_{n,m}(P)$ is not injective, unless 
$ \dim X \leq 1$.
\end{rmk}

\subsection{Adding more vertices}

To determine a polynomial $P(x)$ in one variable of degree at most $d$, it suffices to know the value $P$ takes at $d+1$ different values of $x$. In this final section, by using this observation, we generalise the construction of \'{A}lvarez-C\'{o}nsul and King by adding more vertices so that the map $\tau \mapsto \gamma(\tau)$ is injective. 

For a tuple $\underline{n}=(n_0, \cdots , n_d)$ of increasing natural numbers, we define a functor 
\[ \Phi_{\underline{n}}:=  \text{Hom} (\bigoplus_{i=0}^d\cO_X(-n_i),\:-\:) : \cC oh(X) \ra \cR \text{eps}(K_{\underline{n}})\]
where $K_{\underline{n}}$ denotes the quiver with vertex set $V=\{n_0, \dots , n_d\}$ and $\dim H^0(\cO_X(n_{i+1} - n_i))$ arrows from $n_i$ to $n_{i+1}$: 
\[ K_{\underline{n}} =  \quad \left( \begin{array}{ccccccc}  & \longrightarrow & & & & \longrightarrow &
\\ n_0 & \vdots & n_1 & & n_{d-1} & \vdots & n_d \\ \bullet & H^0(\cO_X(n_1-n_0)) & \bullet & \quad  \cdots \quad &  \bullet & H^0(\cO_X(n_d -n_{d-1})) & \bullet \\  & \vdots &    \\ 
& \longrightarrow & &  & & \longrightarrow & \end{array} \right).\]
More precisely, if $\cE$ is a coherent sheaf on $X$, then $\Phi_{\underline{n}}(\cE)$ is the quiver representation denoted $W_{\cE}=(W_{\cE,n_0}, \dots , W_{\cE,n_d}, \text{ev}_1, \dots , \text{ev}_d)$ where we let
$W_{\cE,l} := H^0(\cE(l))$ and define the maps $\text{ev}_l : H^0(\cE(n_{l-1})) \otimes H^0(\cE(n_l - n_{l-i})) \ra H^0(\cE(n_i))$ using the evaluation map on sections. 

For the dimension vector $d_{\underline{n}}(P):=(P(n_0), \dots , P(n_d))$, we observe that
\[\Phi_{\underline{n}}: \cC oh_{P}^{n_0-\reg}(X) \ra \cR \text{eps}_{d_{\underline{n}}(P)}(K_{\underline{n}}).\]
Moreover, we note that the assignment that maps a Hilbert polynomial $P$ of a sheaf on $X$ to the corresponding dimension vector $d_{\underline{n}}(P)$ is injective.

We want to choose a stability parameter $\theta \in \ZZ^{d+1}$ 
for representations of $K_{\underline{n}}$ of dimension vector 
$d_{\underline{n}}(P)$ such that $\Phi_{\underline{n}}$ sends semistable 
sheaves to $\theta$-semistable quiver representations for $\underline{n} > \! > 0$ 
(that is, for $n_d > \! > n_{d-1} > \! > \cdots > \! > n_0 > \! > 0$). Let
\[ \theta= \theta_{\underline{n}}(P):= (\theta_0, \dots , \theta_d) \quad \text{where} \quad \theta_i:= \sum_{j <i} P(n_j) - \sum_{j>i} P(n_j); \]
then $\sum_{i=0}^d \theta_i P(n_i) = 0$. 
The following lemma demonstrates this is a suitable choice.

\begin{lemma}\label{correct theta}
Let $\theta = \theta_{\underline{n}}(P)$ as above and $\cE$ be a sheaf on $X$. 
If, for all $j < i$, we have 
\[ \frac{h^0(\cE(n_j))}{ h^0(\cE(n_i))}
 \leq \frac{P(n_j)}{P(n_i)},\]
then $\theta(\cE) \geq 0$.
\end{lemma}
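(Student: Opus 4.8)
The plan is to expand $\theta(\cE) = \sum_{i=0}^d \theta_i\, h^0(\cE(n_i))$ by substituting the definition $\theta_i = \sum_{j<i} P(n_j) - \sum_{j>i} P(n_j)$, and then to rearrange the resulting double sum so that the hypothesis applies termwise. Writing $a_i := h^0(\cE(n_i))$ and $p_i := P(n_i)$ for brevity, I would first record the combinatorial identity
\[ \theta(\cE) = \sum_{i=0}^d a_i \Bigl( \sum_{j<i} p_j - \sum_{j>i} p_j \Bigr) = \sum_{0 \le j < i \le d} \bigl( a_i p_j - a_j p_i \bigr), \]
which is the crux of the argument: it exhibits $\theta(\cE)$ as a sum of the pairwise ``cross-differences'' $a_i p_j - a_j p_i$ indexed by pairs $j < i$.

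To establish this identity I would split $\theta(\cE)$ into the two double sums $\sum_{j<i} a_i p_j$ and $\sum_{j>i} a_i p_j$. The first already runs over ordered pairs with $j < i$ and contributes the terms $a_i p_j$; in the second, I would swap the names of the summation indices (replacing $(i,j)$ by $(j,i)$) so that it too runs over pairs with $j < i$, now contributing $a_j p_i$. Subtracting then collapses everything into the single sum over pairs $j < i$ of $a_i p_j - a_j p_i$. This is pure index bookkeeping and can be verified directly (for instance on the cases $d=1,2$).

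Finally, I would invoke the hypothesis. For each pair $j < i$, the assumption $a_j/a_i \le p_j/p_i$ rearranges, after clearing the denominators $a_i$ and $p_i$, into $a_j p_i \le a_i p_j$, that is $a_i p_j - a_j p_i \ge 0$. Hence every summand in the displayed identity is non-negative, and therefore $\theta(\cE) \ge 0$. The one point deserving a word of care is the cross-multiplication: it preserves the direction of the inequality precisely because the quantities $h^0(\cE(n_i))$ and $P(n_i)$ occurring as denominators in the statement are positive (indeed, the ratios in the hypothesis are only meaningful under this positivity). I expect the rearrangement of the double sum into the form above to be the only step worth writing out explicitly, though it presents no genuine difficulty.
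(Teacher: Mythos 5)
Your proposal is correct and follows essentially the same route as the paper: the paper likewise expands $\theta(\cE)$ and rewrites it as $\sum_{i}\sum_{j<i}\bigl(P(n_j)h^0(\cE(n_i)) - P(n_i)h^0(\cE(n_j))\bigr)$, which is exactly your sum of cross-differences, and then concludes termwise from the hypothesis. Your added remark about positivity of the denominators when cross-multiplying is a reasonable (if implicit in the paper) point of care.
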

\begin{proof}
From the definition 
$\theta (\cE) := \sum_{i=0}^d \theta_i h^0(\cE(n_i))$, it follows that 
\begin{align*}
\theta (\cE) & = \sum_{i} \left( \sum_{j< i} P(n_j)  - \sum_{j>i} P(n_j) \right) h^0(\cE(n_i)) \\
& =\sum_i \sum_{j<i} \left( P(n_j)  h^0(\cE(n_i)) - P(n_i) h^0(\cE(n_j) \right) \geq 0
\end{align*}
by using the given inequalities.
\end{proof}

Using this lemma, it is easy to prove the following corollary
analogously to \cite{ack} Theorem 5.10.

\begin{cor}
Let $P$ be a fixed Hilbert polynomial of a sheaf on $X$; 
then for $\underline{n} >\! > 0$, we have
\[ \Phi_{\underline{n}} (\cC oh_{P}^{\text{ss}}(X) ) \subset \cR \text{eps}_{d_{\underline{n}}(P)}^{\theta_{\underline{n}}(P)}(K_{\underline{n}}).\]
\end{cor}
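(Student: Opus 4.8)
The plan is to reduce to the level of objects and then to a pairwise comparison of vertices, where the two-vertex result of \'{A}lvarez-C\'{o}nsul and King (Theorem \ref{ack thm}) applies. Since $\Phi_{\underline{n}}$ is a morphism of stacks and $\theta$-semistability is a pointwise condition, it suffices to show that for $\underline{n} >\!> 0$ and every semistable sheaf $\cE$ with Hilbert polynomial $P$ the representation $W_\cE = \Phi_{\underline{n}}(\cE)$ is $\theta$-semistable, where I abbreviate $\theta := \theta_{\underline{n}}(P)$. First I would take $\underline{n} >\!> 0$ so that all semistable sheaves with Hilbert polynomial $P$ are $n_0$-regular; then each such $\cE$ is $n_i$-regular for all $i$, so $\dim W_{\cE,n_i} = h^0(\cE(n_i)) = P(n_i)$, whence $W_\cE$ has dimension $d_{\underline{n}}(P)$, and the relation $\sum_i \theta_i P(n_i) = 0$ noted above makes $\theta$ a genuine stability parameter.

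The key is the identity established inside the proof of Lemma \ref{correct theta}: for any subrepresentation $W' \subseteq W_\cE$,
\[ \theta(W') = \sum_{j < i} \bigl( P(n_j)\dim W'_{n_i} - P(n_i)\dim W'_{n_j}\bigr). \]
Thus it suffices to prove that each summand is non-negative, that is, $P(n_j)\dim W'_{n_i} \geq P(n_i)\dim W'_{n_j}$ for all $j < i$. I would deduce this by passing to the two vertices $n_j$ and $n_i$: the pair $(W'_{n_j}, W'_{n_i})$ is a subrepresentation of the two-vertex representation $\Phi_{n_j,n_i}(\cE)$. This step uses that, for $\underline{n} >\!> 0$, the multiplication map
\[ H^0(\cO_X(n_{j+1}-n_j)) \otimes \cdots \otimes H^0(\cO_X(n_i - n_{i-1})) \ra H^0(\cO_X(n_i - n_j)) \]
is surjective, so that the full evaluation $W'_{n_j} \otimes H^0(\cO_X(n_i-n_j)) \ra H^0(\cE(n_i))$ factors through iterated consecutive arrows and therefore has image contained in $W'_{n_i}$, by the subrepresentation condition for $W'$ in $K_{\underline{n}}$.

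Granting this, I would apply Theorem \ref{ack thm} to the pair $(n_j,n_i)$: as $\cE$ is semistable, hence pure and $n_j$-regular, and $n_i >\!> n_j >\!> 0$, the representation $\Phi_{n_j,n_i}(\cE)$ is $\theta_{n_j,n_i}(P)$-semistable with $\theta_{n_j,n_i}(P) = (-P(n_i),P(n_j))$. Evaluating $\theta_{n_j,n_i}(P)$-semistability on the subrepresentation $(W'_{n_j},W'_{n_i})$ gives precisely $-P(n_i)\dim W'_{n_j} + P(n_j)\dim W'_{n_i} \geq 0$, the non-negativity of the $(j,i)$-summand. Summing over all pairs $j<i$ yields $\theta(W') \geq 0$, and since $W'$ is arbitrary, $W_\cE$ is $\theta$-semistable. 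The finitely many conditions used here, namely Theorem \ref{ack thm} for each pair $(n_j,n_i)$ and surjectivity of each chain of multiplication maps, are all of the form ``for $n_d >\!> \cdots >\!> n_0 >\!> 0$'', so they are compatible and can be secured by a single nested choice of $\underline{n}$.

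The step I expect to be the main obstacle is the two-vertex reduction, i.e. checking that $(W'_{n_j},W'_{n_i})$ really is a subrepresentation of $\Phi_{n_j,n_i}(\cE)$; everything else is then formal. It is worth stressing why one cannot simply imitate the two-vertex proof with a single generated subsheaf: the weights $\theta_i$ increase with $i$ and change sign, so the subsheaf $\cE'$ generated by $W'_{n_0}$ controls $\dim W'_{n_0}$ from above but $\dim W'_{n_i}$ from below, and at the intermediate vertices (where $\theta_i$ may still be negative) these bounds point the wrong way. Reducing to pairs of vertices and invoking the two-vertex theorem term by term is what circumvents this sign problem.
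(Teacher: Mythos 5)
Your argument is correct and matches the route the paper intends: the paper's entire proof is the remark that the corollary follows from Lemma \ref{correct theta} ``analogously to'' the two-vertex theorem of \'Alvarez-C\'onsul and King, and your pairwise decomposition of $\theta(W')$ via the identity in that lemma's proof, combined with Theorem \ref{ack thm} applied to each pair $(n_j,n_i)$, is exactly that argument made explicit. Your additional verification that $(W'_{n_j},W'_{n_i})$ is a subrepresentation of $\Phi_{n_j,n_i}(\cE)$ --- via surjectivity of the iterated multiplication maps on sections for $\underline{n}>\!>0$ --- fills in a detail the paper leaves implicit and is genuinely needed for the pairwise reduction to go through.
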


Finally, we need to find the value of the parameter $\alpha \in \NN^{d+1}$ 
needed to define the correct notion of HN filtrations of representations of 
$K_{\underline{n}}$ in order to prove an analogue of Theorem \ref{thm ack HN}. 
The following lemma, whose proof is analogous to Lemma \ref{correct theta}, shows that 
\[ \alpha= \alpha_{\underline{n}}(P):= (\alpha_0, \dots , \alpha_d) \quad \text{where} \quad \alpha_i:= \sum_{j <i} P(n_j) + \sum_{j>i} P(n_j) \]
is a suitable choice.

\begin{lemma}
Let $\theta = \theta_{\underline{n}}(P)$ and $\alpha= \alpha_{\underline{n}}(P)$ be as above and $\cE$ and $\cF$ be two sheaves on $X$. If, for all $j < i$, we have
\[\frac{h^0(\cE(n_j))}{ h^0(\cE(n_i))} \leq \frac{h^0(\cF(n_j))}{h^0(\cF(n_i))}, \]
then 
\[ \frac{\theta(\cE)}{\alpha(\cE)} \geq \frac{\theta(\cF)}{\alpha(\cF)}.\]
\end{lemma}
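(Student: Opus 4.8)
The plan is to mimic the strategy of Lemma \ref{how to pick alpha} and its bookkeeping: reduce the inequality of ratios to a single bilinear inequality in the two sheaves, and then exhibit that inequality as a sum of manifestly signed terms. Throughout I would abbreviate $p_j := P(n_j)$, $e_i := h^0(\cE(n_i))$ and $f_i := h^0(\cF(n_i))$, so that $\theta(\cE) = \sum_i \theta_i e_i$ and $\alpha(\cE) = \sum_i \alpha_i e_i$, with the analogous formulas for $\cF$. Since the denominators appearing in the hypothesis force each $e_i$ and $f_i$ to be non-zero, and each $\alpha_i$ is strictly positive for $\underline{n} >\!> 0$, both $\alpha(\cE)$ and $\alpha(\cF)$ are positive. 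Hence the desired inequality $\theta(\cE)/\alpha(\cE) \geq \theta(\cF)/\alpha(\cF)$ is equivalent to
\[ \theta(\cE)\alpha(\cF) - \theta(\cF)\alpha(\cE) \geq 0, \]
and it is this cross-product that I would estimate.

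First I would expand the cross-product as a double sum over the vertices and antisymmetrise. Writing it out,
\[ \theta(\cE)\alpha(\cF) - \theta(\cF)\alpha(\cE) = \sum_{i,k} \theta_i \alpha_k (e_i f_k - e_k f_i), \]
the diagonal terms $i=k$ vanish, and grouping the ordered pair $(i,k)$ with $(k,i)$ for $i<k$ collapses this to
\[ \theta(\cE)\alpha(\cF) - \theta(\cF)\alpha(\cE) = \sum_{i<k} \bigl(\theta_i \alpha_k - \theta_k \alpha_i\bigr)\bigl(e_i f_k - e_k f_i\bigr). \]
It then suffices to show that each of the two factors is non-positive for every $i<k$, so that each summand is a product of two non-positive numbers and the whole sum is non-negative.

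The second factor is controlled by the hypothesis: applied to the pair $j=i<k$, the assumption $e_i/e_k \leq f_i/f_k$ rearranges, using positivity of $e_k$ and $f_k$, to $e_i f_k - e_k f_i \leq 0$. The first factor is a purely combinatorial statement about the weights, and this is the step I expect to be the crux. Setting $A_i := \sum_{j<i} p_j$ and $B_i := \sum_{j>i} p_j$, so that $\theta_i = A_i - B_i$ and $\alpha_i = A_i + B_i$, a direct multiplication gives the minor identity
\[ \theta_i \alpha_k - \theta_k \alpha_i = 2\bigl(A_i B_k - A_k B_i\bigr). \]
For $i<k$ one has $A_i \leq A_k$ and $B_k \leq B_i$ with all four quantities non-negative (as $p_j = P(n_j) > 0$ for $\underline{n} >\!> 0$), whence $A_i B_k \leq A_k B_k \leq A_k B_i$, and so $\theta_i \alpha_k - \theta_k \alpha_i \leq 0$. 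Combining the two sign computations, every summand is non-negative, the cross-product is non-negative, and dividing by $\alpha(\cE)\alpha(\cF) > 0$ yields the lemma. The only genuine content is the minor identity together with the monotonicity of $A$ and $B$ in the index; everything else is the same bilinear bookkeeping as in the two-vertex Lemma \ref{how to pick alpha}.
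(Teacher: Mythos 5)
Your proof is correct, and it is essentially the argument the paper intends: the paper only remarks that the proof is ``analogous to Lemma \ref{correct theta}'', and your antisymmetrisation of the cross-product $\theta(\cE)\alpha(\cF)-\theta(\cF)\alpha(\cE)$ into pairwise terms indexed by $i<k$ is exactly that analogy carried out, with the minor identity $\theta_i\alpha_k-\theta_k\alpha_i=2(A_iB_k-A_kB_i)$ and the monotonicity of $A_i$ and $B_i$ supplying the sign of the combinatorial factor. All the sign checks are right, so nothing is missing.
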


Finally, we deduce the required corollary in the same way as the proof of Theorem \ref{thm ack HN}, by using the above lemma.

\begin{cor}
Fix a Hilbert polynomial $P$ and let $\theta = \theta_{\underline{n}}(P)$ and $\alpha= \alpha_{\underline{n}}(P)$. For a HN type $\tau=(P_1, \dots , P_s)$ of sheaves on $X$ with Hilbert polynomial $P$, we have, for $\underline{n} > \! > 0$, that
\[ \Phi_{\underline{n}} (\cC oh_{P}^{\tau}(X) ) \subset \cR \text{eps}_{d_{\underline{n}}(P)}^{\gamma_{\underline{n}}(\tau)}(K_{\underline{n}})\]
where
\[ \gamma_{\underline{n}}(\tau): = (d_{\underline{n}}(P_1), \dots , d_{\underline{n}}(P_s)) \]
and the HN stratification on the stack of quiver representations is taken with respect to $(\theta, \alpha)$. 
In particular, the assignments $P \mapsto d_{\underline{n}}(P)$ and $\tau \mapsto \gamma_{\underline{n}}(\tau)$ are both injective.
\end{cor}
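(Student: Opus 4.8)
The plan is to follow the proof of Theorem~\ref{thm ack HN} essentially verbatim, substituting for the two-vertex inequalities of Lemma~\ref{how to pick alpha} their multi-vertex analogues given by the two lemmas immediately above. First I would fix $\underline{n} \gg 0$ large enough that the semistability correspondence established in the preceding corollary holds simultaneously for each of the finitely many Hilbert polynomials $P_1, \dots, P_s$ appearing in $\tau$, and large enough that every semistable sheaf with one of these Hilbert polynomials is $n_0$-regular. Given a sheaf $\cE$ on $X$ of HN type $\tau$ with HN filtration $0 = \cE^{(0)} \subset \cdots \subset \cE^{(s)} = \cE$, I would form the induced filtration $0 = W^{(0)} \subset \cdots \subset W^{(s)} = W_\cE$ of $K_{\underline{n}}$-representations, where $W^{(i)} := W_{\cE^{(i)}}$, and show that it is the HN filtration of $W_\cE$ with respect to $(\theta, \alpha)$ and that its type is $\gamma_{\underline{n}}(\tau)$.

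The intermediate steps run as follows. Each subquotient $\cE_i := \cE^{(i)}/\cE^{(i-1)}$ is semistable with Hilbert polynomial $P_i$, hence $n_0$-regular, so by induction every $\cE^{(i)}$ is $n_0$-regular; this forces $H^1(\cE^{(i-1)}(n_j)) = 0$ at every vertex $n_j$ and gives short exact sequences of global sections identifying $W_i := W^{(i)}/W^{(i-1)}$ with $W_{\cE_i}$. By the semistability correspondence, each $W_i$ is then $\theta_{\underline{n}}(P_i)$-semistable. I would promote this to $(\theta,\alpha)$-semistability of $W_i$ by applying the last of the two lemmas above to an arbitrary subrepresentation $W' \subset W_i$ compared against $W_i$, and I would obtain the strict chain $\theta(W_1)/\alpha(W_1) < \cdots < \theta(W_s)/\alpha(W_s)$ from the same lemma together with the defining relations $P_1 \succ \cdots \succ P_s$ of the HN type, which for $\underline{n} \gg 0$ make the ratios $h^0(\cE_i(n_j))/h^0(\cE_i(n_{j'}))$ monotone in $i$. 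These two facts identify the induced filtration as the HN filtration and pin down its type as $\gamma_{\underline{n}}(\tau)$.

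For the concluding injectivity assertions, which are the genuinely new feature compared with the two-vertex case, I would argue by polynomial interpolation. Since the Hilbert polynomial of any sheaf on $X$ has degree at most $\dim X \leq d$, it is determined by its values at the $d+1$ distinct integers $n_0, \dots, n_d$; hence $P \mapsto d_{\underline{n}}(P) = (P(n_0), \dots, P(n_d))$ is injective. Each entry of $\gamma_{\underline{n}}(\tau)$ therefore recovers the corresponding $P_i$, and the length $s$ is read off as the number of entries, so $\tau \mapsto \gamma_{\underline{n}}(\tau)$ is injective.

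I expect the only real technical point to be the identification $W_i \cong W_{\cE_i}$: in the two-vertex situation one needed to control $H^0$ at just two twists, whereas here the same must hold at all $d+1$ vertices simultaneously, and it is precisely the $n_0$-regularity of each $\cE^{(i)}$ (equivalently $n_j$-regularity for every $j$) that guarantees the requisite vanishing $H^1(\cE^{(i-1)}(n_j)) = 0$. Everything else should be a direct transcription of the argument for Theorem~\ref{thm ack HN}.
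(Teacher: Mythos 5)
Your proposal coincides with the paper's own proof, which is itself only a one-line deferral: the paper deduces the corollary ``in the same way as the proof of Theorem \ref{thm ack HN}, by using the above lemma,'' and that substitution of the multi-vertex lemmas for Lemma \ref{how to pick alpha} is exactly the transcription you carry out, with the injectivity claims following from the degree-$\leq d$ interpolation observation the paper records at the start of this subsection. No discrepancy.
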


\bibliographystyle{amsplain}
\bibliography{references}

\medskip \medskip

\noindent{Freie Universit\"{a}t Berlin, Arnimallee 3, Raum 011, 14195 Berlin, Germany} 

\medskip \noindent{\texttt{hoskins@math.fu-berlin.de}}

\end{document}